\newtheorem{thrm}{\textbf{Theorem}}[section]
\newtheorem{lmm}{\textbf{Lemma}}[section]
\newtheorem{prpstn}{\textbf{Proposition}}[section]
\newtheorem{corollary}{\textbf{Corollary}}[section]
\newtheorem{rmrk}{\textbf{Remark}}[section]
\newtheorem{dfntn}{\textbf{Definition}}[section]
\def\be{\begin{equation}}
\def\ee{\end{equation}}
\def\bea{\begin{eqnarray}}
\def\eea{\end{eqnarray}}
\def\bt{\begin{theorem}}
\def\et{\end{theorem}}
\def\bl{\begin{lemma}}
\def\el{\end{lemma}}
\def\br{\begin{remark}}
\def\er{\end{remark}}
\def\bp{\begin{proposition}}
\def\ep{\end{proposition}}
\def\bc{\begin{corollary}}
\def\ec{\end{corollary}}
\def\bd{\begin{definition}}
\def\ed{\end{definition}}
\def\R3{\mathbb{R}^3} 
\def\R{\mathbb{R}}
\def\F2o{\overline{F_2}}
\def \E{\mathcal E}
\def \eps{\varepsilon}
\newcommand{\ej}[0]{\eps_j}
\newcommand{\uj}[0]{u_j}
\newcommand{\ux}[0]{u_{j,x}}
\newcommand{\uint}[0]{\int_0^1}
\newcommand{\ai}[0]{\alpha_i}
\newcommand{\bi}[0]{\beta_i}
\newcommand{\beq}[0]{\begin{equation}}
\newcommand{\eeq}[0]{\end{equation}}
\newcommand{\Mint}[0]{\int_0^1}
\newcommand{\EE}[0]{{\mathcal E}}
\DeclareMathOperator{\supp}{supp}
\DeclareMathOperator{\argmin}{argmin}
\begin{document}

\title{Interfacial energy as a selection mechanism for minimizing gradient Young measures in a one-dimensional model problem}

\author{
{\sc Francesco Della Porta}\footnote{Mathematical Institute, University of Oxford, Oxford OX2 6GG, UK \textit{dellaporta@maths.ox.ac.uk}}
}
\date{\today}
\maketitle

%
%
\begin{abstract}
Energy functionals describing phase transitions in crystalline solids are often non-quasiconvex and minimizers might therefore not exist. On the other hand, there might be infinitely many gradient Young measures, modelling microstructures, generated by minimizing sequences, and it is an open problem how to select the physical ones. \\
In this work we consider the problem of selecting minimizing sequences for a one-dimensional three-well problem $\EE$. We introduce a regularization $\E^\eps$ of $\E$ with an $\eps$-small penalization of the second derivatives, {and we obtain as $\eps\downarrow0$ its $\Gamma-$limit and, under some further assumptions, the $\Gamma-$limit of a suitably rescaled version of $\E^\eps$.} The latter selects a unique minimizing gradient Young measure of the former, which is supported just in two wells and not in three. {We then show that some assumptions are necessary to derive the $\Gamma-$limit of the rescaled functional,} but not to prove that minimizers of $\E^\eps$ generate, as $\eps\downarrow 0$, Young measures supported just in two wells and not in three.
\end{abstract}
%
%
%
\maketitle
\section{Introduction}
A common problem that arises when studying martensitic transformations in the context of nonlinear elasticity (see e.g., \cite{BallJames1,BallJames2,Batt,MullerDispense}) is to minimize an energy functional
$$
E(y) = \int_\Omega \phi (\nabla y(x))\,\mathrm dx,
$$ 
where $\Omega$ is an open and bounded Lipschitz domain, and $y\colon \Omega\to\R^3$ is a map in a suitable Sobolev space satisfying $y=\bar y$ on $\partial\Omega$, for some smooth enough mapping $\bar{y}$. In this context, the continuous function $\phi\colon \R^{3\times 3} \to [0,+\infty]$ is generally such that
$$
\phi(F) = 0\qquad \Longleftrightarrow \qquad F\in \mathcal K:=\sum_{i=1}^n SO(3)U_i,
$$ 
where $n\geq 1$ and $U_i$ are positive definite symmetric matrices representing the different variants of martensite. As in general $E$ is not quasiconvex, minimizers for this energy might not exist. Therefore, following the idea of \cite{BallJames1} one can study the behaviour of minimizing sequences, having a gradient that tends in measure to $\mathcal{K}$, and characterised by interesting microstructures. {In order to capture the limiting behaviour of the minimising sequences, one can study the relaxed functional
$$
\bar E(\nu) = \int_\Omega \int_{\R^{3\times3}} \phi (F)\,\mathrm d\nu_x(F)\,\mathrm dx,
$$
where $\nu$ is a gradient Young measure containing the information {about} microstructures in the crystal (see e.g., \cite{BallJames2,MullerDispense,Pedregal}).} 
Defining $\mathcal{M}_1(\R^{3\times3})$ as the set of probability measures on $\R^{3\times3},$ let us consider
\[
\mathcal A:= \Set{\nu \in L^\infty_{w^*}(\Omega;\mathcal{M}_1(\R^{3\times3}))\,\bigg|\; \text{\parbox{3.5in}{\centering $\supp\nu_x\subset \mathcal K$, $\exists y\in W^{1,\infty}(\Omega;\R^3)$ s.t. $y=\bar{y}$ on $\partial\Omega,$ and $\int_{\R^{3\times3}} F\mathrm d\nu_x(F) = \nabla y(x)$  a.e. in $\Omega$}}},
\]
and notice that this set is the set of minimizers of $\bar E$ whenever $\min\bar E=0$. Here, we denoted by $L^\infty_{w^*}(\Omega;\mathcal M_1(\R^{3\times 3}))$ the space $L^\infty(\Omega;\mathcal M_1(\R^{3\times 3}))$ endowed with the weak$*$ topology. The solutions constructed in \cite{MullerSverak} with the technique of convex integration, show that the set $\mathcal A$ might contain infinitely many minimizers for $\bar E$, and its elements might sometimes appear non-physical. In agreement with the physics, many authors in the literature (see e.g., \cite{BallCrooks,BallJames1,MullerDolzman, Muller1,Leoni,ContiSchweizer}) have considered a regularization of $E$ that penalizes the second derivatives of $y$ such as
\beq
\label{E originale}
E^\eps(y) = \int_\Omega \bigl(\eps^2|\nabla^2y|^2+\phi (\nabla y(x))\bigr)\,\mathrm dx,\qquad\text{ or }\qquad \tilde E^\eps(y) = \eps|\nabla^2y|(\Omega)+\int_\Omega \phi (\nabla y(x))\,\mathrm dx.
\eeq
Here, $\eps>0$ is small and $|\nabla^2y|(\Omega)$ is the norm of $\nabla^2y$ as a measure on $\Omega.$ Many results have been proved in the case $n=2$ and without boundary conditions. For example, it is proved in \cite{MullerDolzman} that the requirement $\nabla y \in BV(\Omega;\mathcal K)$ forces the gradient discontinuities to be just on planes that never intersect in $\Omega$.  
In \cite{Leoni} the limit solutions for $E^\eps$ as $\eps\to0$ when $\mathcal K=\{A,B\}$ are characterized via a $\Gamma$-limit argument. In the two-dimensional setting with $\mathcal K=\{SO(2)A,SO(2)B\}$ the generalised $\Gamma$-limit has been analysed in \cite{ContiSchweizer}, and strongly exploits the above mentioned result of \cite{MullerDolzman}.\\ 

More generally, we could argue that the physically relevant minimizers of $\bar E$ are not those in $\mathcal A$, but those belonging to the subset
$$
\mathcal B:=  \Set{ \nu\in\mathcal A \,\bigg|\; \text{\parbox{3.5in}{\centering $\exists$ 
minimizers $u^{\eps_j}$ of $E^{\eps_j}$, with $\eps_j\downarrow0$, such that $\delta_{\nabla u^{\eps_j}}\to \nu$ in $L^\infty_{w^*}(\Omega;\mathcal M_1(\R^{3\times 3}))$}
}},
$$
or equivalently $\tilde{\mathcal B}$ where $E^{\eps_j}$ is replaced by $\tilde E^{\eps_j}$. \\

Finding an explicit characterization for $\mathcal B$ seems however out of reach for the general three-dimensional problem. For this reason, in this work we focus on the one-dimensional energy functional
\beq
\label{energia}
\EE(u)  = \Mint \bigr( W(u_x)+u^2\bigl)\,\mathrm dx,
\eeq
which has been often considered in the literature (see e.g., \cite{BallPego,Muller1,Muller2,Nicolaides}) as a one-dimensional prototype for $E$. {
Indeed, the role of the boundary conditions in more dimensions is played here by the term $u^2$ in the energy, which forces the $L^2-$norm of the minimisers (or of the minimising sequences) to be close to a prescribed value, which is chosen to be null for simplicity, and whose gradient does not sit on the wells. }
Suppose $W$ satisfies
\begin{itemize}
\item[(H1)] $W:\R\mapsto \mathbb{R}_+$ is a continuous non-negative function;
\item[(H2)] there exist $c_1,c_2,c_3>0$ and $p\in (1,\infty)$ such that
$$
c_1|s|^p - c_2 \leq W(s)\leq c_3 (|s|^p+1),\qquad \forall s \in \R;
$$
\item[(H3)] $W(s)=0$, for each $s\in \mathcal Z$, and $W(s)>0,$ otherwise,
where $$\mathcal Z:=\{s\in \R:\,s\in \argmin(W)\}. $$
\end{itemize}
\begin{figure}
  \centering
  \includegraphics[width=.59\linewidth]{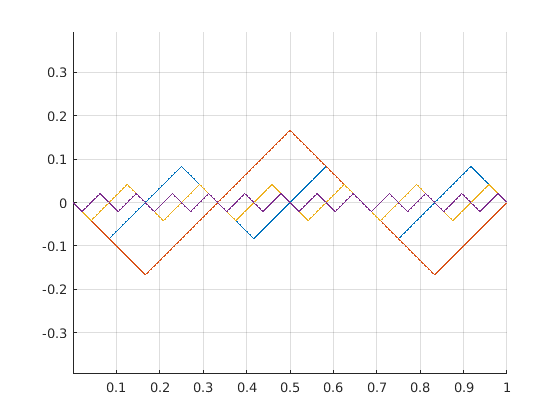}
  \caption{Minimizing sequences for $\EE$ when $0\notin\mathcal Z$.}
  \label{sawtooth}
\end{figure}
If $\mathcal Z$ has a finite number of elements, if there exist $z_1,z_2\in\mathcal Z$ with $z_1<0<z_2$, and if $0\notin \mathcal Z,$ then $W$ is not convex and $\EE$ does not have minimizers in $W^{1,p}_0(0,1)$. Indeed, by constructing arbitrarily small saw-tooth functions (cf. Figure \ref{sawtooth}) with gradient in $\mathcal{Z}$ one can show that $\inf \mathcal{E} = 0$. Therefore, the existence of a minimizer $u\in W^{1,p}_0(0,1)$ would imply $u=0$, and hence $u_x=0$ a.e. in $(0,1)$, which is in contradiction with the fact that, by (H3), $W(0)\neq 0$. For this reason, we consider the regularized problem 
\beq
\label{energia2}
\EE^\eps(u)  = 
\begin{cases}
\Mint\bigl(\eps^6|u_{xx}|^2 + W(u_x)+u^2\bigr)\,\mathrm{d}x, &\qquad\text{if }u\in W^{1,p}_0(0,1)\cap H^2(0,1),\\
+\infty, &\qquad\text{otherwise},
\end{cases}
\eeq
which is the one-dimensional analogue of \eqref{E originale}. $\EE^\eps$ can also be rewritten by using gradient Young measures (see e.g., \cite{MullerDispense,Pedregal}) as
\beq
\label{energia3}
\bar\EE^\eps(u,\nu)  = 
\begin{cases}
\EE^\eps(u),&\qquad \text{if $u\in W^{1,p}_0(0,1)\cap H^2(0,1)$ and $\nu_x=\delta_{u_x(x)}$ a.e. in $(0,1)$},\\
+\infty, &\qquad\text{otherwise},
\end{cases}
\eeq
with $\delta_s$ denoting the Dirac mass at $s$.
In this case the problem admits a solution in $W^{1,p}_0(0,1)\cap H^2(0,1)$ and the question arises as to what happens to the limit of the minimizers $u^\eps$ as $\eps\downarrow0$. 
For every $u\in W^{1,p}_0(0,1)$ let us define the set of its gradient Young measures
\begin{align*}
&\mathrm{GYM}^p(u):=
\Set{
\nu\in L_{w^*}^\infty(0,1;\mathcal{M}_1(\R))
 \bigg|\; \text{\parbox{3.0in}{\centering 
 $\int_\R s\,\mathrm d\nu_x(s) = u_x(x)$ a.e. 
 $x\in(0,1)$,  $\Mint \int_\R \lvert s\rvert^p \, \mathrm d\nu_x(s)\,\mathrm dx<\infty$
 }}},\\
&\mathrm{GYM}^\infty(u):=
\Set{\nu\in L_{w^*}^\infty(0,1;\mathcal{M}_1(\R))
 \bigg|\; \text{\parbox{3.0in}{\centering $\int_\R s\,\mathrm d\nu_x(s) = u_x,\, \supp \nu_x\subset K$ a.e. in $(0,1)$, $K\subset\R$ compact}
}}.
\end{align*}
Here, $\mathcal M(\R)$ and $\mathcal M_1(\R)$, often abbreviated below by $\mathcal M$ and $\mathcal M_1$, are respectively the space of bounded Radon measures $\mu$ on $\R$, and its subset of probability measures. A preliminary result that is proved later in Section \ref{section 2} is the following 
\begin{prpstn}
\label{GL general}
Let $W$ satisfy {\rm(H1)--(H3)}. Then, $\bar\EE^\eps$ $\Gamma-$converges to 
\[\EE^0(u,\nu)=
\begin{cases}
\Mint \bigl(\langle \nu_x,W \rangle +u^2\bigr) \mathrm{d}x,&\qquad\text{if }u\in W^{1,p}_0(0,1), \nu \in \mathrm{GYM}^{p}(u),\\
+\infty, &\qquad\text{otherwise},
\end{cases}
\]
in the $L^2(0,1)\times L^\infty_{w^*}(0,1;\mathcal{M})$ topology as $\eps$ tends to $0$.
\end{prpstn}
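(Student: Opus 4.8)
The plan is to establish the $\Gamma$-convergence $\bar\EE^\eps \xrightarrow{\Gamma} \EE^0$ in two steps, proving separately the liminf inequality (lower bound) and the existence of a recovery sequence (upper bound), with respect to the product topology of strong $L^2(0,1)$ convergence in the $u$ variable and weak$^*$ convergence in $L^\infty_{w^*}(0,1;\mathcal M)$ in the $\nu$ variable. Throughout, the key fact is the compactness-and-representation theory of gradient Young measures in one dimension: if $u^\eps\rightharpoonup u$ weakly in $W^{1,p}_0(0,1)$ with $\sup_\eps\|u^\eps_x\|_{L^p}<\infty$, then (up to a subsequence) $\delta_{u^\eps_x}$ generates a gradient Young measure $\nu\in\mathrm{GYM}^p(u)$, and for any continuous $f$ with $|f(s)|\leq C(1+|s|^q)$, $q<p$, one has $f(u^\eps_x)\rightharpoonup \langle\nu_x,f\rangle$ in $L^1$; for $q=p$, lower semicontinuity $\liminf\int f(u^\eps_x)\geq\int\langle\nu_x,f\rangle$ holds when $f\geq0$. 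This is exactly the setting provided by (H1)--(H3).

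\emph{Liminf inequality.} Let $(u^\eps,\nu^\eps)\to(u,\nu)$ in the stated topology with $\liminf_\eps \bar\EE^\eps(u^\eps,\nu^\eps)<\infty$. Passing to a subsequence realizing the liminf, we may assume $u^\eps\in W^{1,p}_0\cap H^2$, $\nu^\eps_x=\delta_{u^\eps_x(x)}$, and $\bar\EE^\eps(u^\eps,\nu^\eps)=\EE^\eps(u^\eps)$ is uniformly bounded. By (H2), $\int_0^1 W(u^\eps_x)\,\mathrm dx\geq c_1\|u^\eps_x\|_{L^p}^p-c_2$, so $u^\eps$ is bounded in $W^{1,p}_0(0,1)$; hence $u^\eps\rightharpoonup \hat u$ weakly in $W^{1,p}$ and strongly in $L^2$, forcing $\hat u=u$ and in particular $u\in W^{1,p}_0(0,1)$. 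The associated Young measure generated by $\delta_{u^\eps_x}$ is some $\mu\in\mathrm{GYM}^p(u)$; since $\delta_{u^\eps_x}=\nu^\eps_x\xrightarrow{w^*}\nu_x$ by hypothesis and the barycenter is $u_x$ a.e., one identifies $\mu=\nu$, so $\nu\in\mathrm{GYM}^p(u)$. For the energy, discard the nonnegative $\eps^6|u^\eps_{xx}|^2$ term; then $u^\eps\to u$ strongly in $L^2$ gives $\int (u^\eps)^2\to\int u^2$, while for the well term the nonnegativity of $W$ together with the Young-measure lower semicontinuity yields $\liminf_\eps\int_0^1 W(u^\eps_x)\,\mathrm dx\geq \int_0^1\langle\nu_x,W\rangle\,\mathrm dx$. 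Combining these gives $\liminf_\eps\bar\EE^\eps(u^\eps,\nu^\eps)\geq \EE^0(u,\nu)$.

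\emph{Recovery sequence.} Fix $(u,\nu)$ with $\EE^0(u,\nu)<\infty$, so $u\in W^{1,p}_0(0,1)$ and $\nu\in\mathrm{GYM}^p(u)$. By the characterization of $\mathrm{GYM}^p$ (via the Kinderlehrer--Pedregal theorem or its elementary one-dimensional laminate version), there is a sequence $w^k\in W^{1,p}_0(0,1)$ with $w^k\to u$ in $L^2$ (indeed in any $L^q$, $q<\infty$, since $w^k_x$ is $L^p$-bounded and $w^k\to u$ weakly in $W^{1,p}$), $w^k_x$ $L^p$-bounded, $\delta_{w^k_x}\xrightarrow{w^*}\nu$, and $\int_0^1 W(w^k_x)\,\mathrm dx\to\int_0^1\langle\nu_x,W\rangle\,\mathrm dx$; moreover one may take $w^k$ piecewise affine with finitely many slopes. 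Such $w^k$ is only Lipschitz, not $H^2$, so the second step is a mollification: set $u^{k}_\delta:=w^k*\rho_\delta$ (corrected near the endpoints to restore the zero boundary condition — a standard modification costing arbitrarily little energy), which lies in $W^{1,p}_0\cap H^2$, satisfies $\|u^{k}_{\delta,xx}\|_{L^2}^2\leq C(k)\delta^{-\alpha}$ for some $\alpha>0$, and converges as $\delta\to0$ to $w^k$ strongly enough that $\int(u^k_\delta)^2\to\int(w^k)^2$, $\int W(u^k_{\delta,x})\to\int W(w^k_x)$ (using continuity of $W$, the $L^p$-growth bound, and $u^k_{\delta,x}\to w^k_x$ in $L^p$), and $\delta_{u^k_{\delta,x}}\xrightarrow{w^*}\delta_{w^k_x}$. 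Then choose $\delta=\delta(\eps,k)$ so that $\eps^6\|u^k_{\delta,xx}\|_{L^2}^2\to0$, and finally extract a diagonal sequence $u^\eps:=u^{k(\eps)}_{\delta(\eps)}$ (using metrizability of the relevant topologies on bounded sets) with $\nu^\eps_x:=\delta_{u^\eps_x(x)}$, so that $(u^\eps,\nu^\eps)\to(u,\nu)$ and $\bar\EE^\eps(u^\eps,\nu^\eps)\to\EE^0(u,\nu)$.

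\emph{Main obstacle.} The liminf inequality is essentially soft once the Young-measure compactness is in place; the delicate part is the recovery sequence, specifically reconciling two competing requirements — the approximating functions must be smooth enough ($H^2$) for the penalized functional to be finite, yet their oscillations must be fine enough to generate the prescribed Young measure $\nu$ with the correct energy. The quantitative control of $\|u^k_{\delta,xx}\|_{L^2}$ against the oscillation length scale, and the subsequent diagonal extraction coupling $k$, $\delta$ and $\eps$, is where the care is needed; the endpoint correction to maintain $u\in W^{1,p}_0$ is routine but must be checked not to spoil the Young measure or the energy in the limit.
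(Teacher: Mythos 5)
Your proof is correct in outline and takes a genuinely different route from the paper. You verify the two $\Gamma$-convergence inequalities directly: the liminf inequality via coercivity (H2), Young-measure compactness, and lower semicontinuity of $\nu\mapsto\int\langle\nu_x,W\rangle$; the recovery sequence by first invoking a $p$-equiintegrable generating sequence for $\nu$ (Kinderlehrer--Pedregal/Pedregal Thm.~8.7), then mollifying to enter $H^2$, then diagonalizing against $\eps$. The paper instead exploits the fact that $\bar\EE^\eps$ is \emph{monotone decreasing} as $\eps\downarrow 0$, so by a standard fact (\cite[Remark 1.40]{Braides}) the $\Gamma$-limit equals the lower semicontinuous envelope of the pointwise limit. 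This converts the $\Gamma$-convergence problem into a pure relaxation problem for the $\eps$-independent functional; the paper then (i) drops the $H^2$ and $\delta_{u_x}$ constraints by density arguments using the same Pedregal results, and (ii) shows the resulting $\mathrm{GYM}^p$-relaxed functional is already lower semicontinuous using results of Sychev (\cite[Thm.~3.6, Prop.~4.5, Prop.~3.7]{Sychev}). What the paper's route buys is the elimination of the delicate coupling you correctly flag as the main obstacle --- there is no need to balance the $\eps^6\|u_{xx}\|_{L^2}^2$ term against the oscillation scale via a diagonal argument, because the monotonicity trick removes the $\eps$-penalization before any explicit sequence is constructed. What your route buys is self-containment and transparency: it does not require the monotonicity observation and is the argument one would write for a non-monotone family. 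One caveat in your write-up: your $w^k$ need not be piecewise affine with finitely many slopes unless you first truncate to a $p$-equiintegrable bounded-gradient approximation; this does not affect the mollification estimate since $\|(w^k\ast\rho_\delta)_{xx}\|_{L^2}\leq c\,\|w^k_x\|_{L^p}\,\delta^{-\alpha}$ holds for general $w^k\in W^{1,p}$, but you should not claim the stronger structure unless you use it.
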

If $\mathcal{Z}=\{z_1,z_2\}$ with $z_1<0<z_2$, then under (H1)--(H3) minimizers $(u,\nu)$ of $\EE^0$ must satisfy
\beq
\label{minimi}
u(x)=0,\qquad \nu\in \mathrm{GYM}^p(0),\qquad \supp\nu_x\in\mathcal Z,\qquad\text{a.e. in $(0,1)$}.
\eeq
These conditions determine a unique minimizer $(u,\nu)$ to $\EE^0$, namely
$$
u(x)=0,\qquad \nu_x = \frac{z_2}{z_2-z_1}\delta_{z_1}  -\frac{z_1}{z_2-z_1}\delta_{z_2},\qquad\text{a.e. $x$ in $(0,1)$}.
$$
Let us assume
\begin{itemize}
\item[(H4)] $\mathcal{Z} = \bigl\{z_1,z_2,z_3\bigr\}$, and, without loss of generality, that $z_1<0<z_2< z_3.$
\end{itemize}
{In this case, given any arbitrary measurable 
$$\lambda\colon (0,1)\to\Bigl[0,\Bigl(1-\frac{z_2}{z_1}\Bigr)^{-1}\Bigr],$$
the pair $(u,\nu)$ defined for almost every $x\in(0,1)$ by $u(x) = 0$ and
\beq
\label{1puntoq}
\nu_x = -\frac{z_3+\lambda(x)(z_2-z_3)}{z_1-z_3}\delta_{z_1} + \lambda(x)\delta_{z_2}+\frac{z_1+\lambda(x)(z_2-z_1)}{z_1-z_3}\delta_{z_3},
\eeq
minimises $\mathcal{E}^0.$ 
}
As a consequence, by assuming (H1)--(H4), uniqueness of minimizers for $\EE^0$ is lost, that is, the gradient of the minimizing sequences for $\mathcal{E}$ oscillate, and converge in measure to $\{z_1,z_2,z_3\}$ without any particular preference. The aim of this work is to prove that minimizers of $\EE^\eps$ generate gradient Young measures supported in $\{z_1,z_2\}$, but not in $z_3$. Therefore, by choosing minimisers of $\EE^\eps$ with $\eps\downarrow0$ as minimizing sequences for $\EE$ we can select a unique minimising gradient Young Measure, out of the infinitely many given above.\\

Let $$V:=H^2(0,1)\cap W^{1,p}_0(0,1).$$ Then we define $I^\eps$ by
$$
I^\eps (u) = I^\eps(u,\nu):=
\begin{cases}
\eps^{-2}\int_0^1\bigl(\eps^6u_{xx}^2+W(u_x)+u^2	\bigr)\,\mathrm dx,&\qquad\text{if }u\in V,\, \nu_x=\delta_{u_x(x)},\\
+\infty, &\qquad\text{otherwise}.
\end{cases}
$$
We remark that this problem was thoroughly studied in \cite{Muller1,Muller2}, under the assumption that $W$ is a double-well potential, and where quasi-periodicity of the minimizers was also proved. As shown below, however, generalization to a three well problem is non-trivial and requires a good understanding on the possible shape of the minimizing sequences. We also point out that the behaviour of $I^\eps$ is different from the one of Modica-Mortola type functionals (see e.g., \cite{CSZ,MM}) as $\eps\downarrow 0$. Indeed, in our case the term in $u^2$ forces minimizers of $I^\eps$ to oscillate faster and faster as $\eps\downarrow 0$, making the number of oscillations in the gradient tend to infinity. In what follows we define
$$E_0:=2\int_{z_1}^{z_2}|W(s)|^\frac{1}{2}\mathrm{d}s,\qquad E_1:=2\int_{z_2}^{z_3}|W(s)|^\frac{1}{2}\mathrm{d}s,$$
and 
\begin{align*}
A_0 &:= \inf_d\bigl(3^{-1}z_2^2z_{21}d^2+E_0d^{-1}) = (2^{-1}3)^{\frac23}E_0^{\frac23}\bigl(z_2^2 z_{21}\bigr)^{\frac13},\\
B_0 &:=\inf_d\bigl(3^{-1}z_3^2 z_{31}d^2+(E_0+E_1)d^{-1}) = (2^{-1}3)^{\frac23}(E_0+E_1)^{\frac23}\bigl(z_3^2z_{31}\bigr)^{\frac13}.
\end{align*}
where $z_{i1}:=(1-\frac{z_i}{z_1})$ for $i=2,3.$ Further assumptions on $W$ are: 
\begin{itemize}

\item[(H5)](Coercivity) There exist $\eta_0\in(0,\min\{1,-z_1,z_2,\frac{z_3-z_2}2\})$, $c_0>0$, $q>0$ 
such that 
$$
W(s)\geq c_0 \min\bigl\{\min_i|s-z_i|^q,\,|\eta_0|^q\bigr\}, \qquad\forall s\in\R;
$$
\item[(H6)] Let $f_6(y):=9(E_0+E_1)^2\bigl(z_2^2+y^3z_3^2+3yz_2(yz_3+z_2)\bigr),$ then 
$$
f_6(y)-\bigl(A_0+B_0 y \bigr)^3\geq 0,\quad\text{for every $y\geq0; $}
$$
\item[(H7)] Let $f_7(y):=\frac94(E_0+2E_1)^2\bigl(z_2^2z_{21}+y^3z_3^2z_{31}+3yz_2z_{31}(yz_3+z_2)\bigr),$ then 
$$
f_7(y)-\bigl(A_0+B_0 y \bigr)^3\geq 0,\quad\text{for every $y\geq0; $}
$$
\item[(H8)] Let
\[
f_8(y):=9(E_0 + E_1)^2\biggl(z_2^2z_{21}+y^3z_3^2z_{31}-3
\frac{(y^2 z_{31}z_3-z_2z_{21})^2}{4(z_{21}+yz_{31})}\biggr),
\]
then,
 $$
 f_8(y)-\bigl(A_0+B_0 y \bigr)^3\geq 0,\quad\text{for every $y\geq0; $}
 $$
\end{itemize}
\noindent
These technical assumptions are used to guarantee that the microstructures constructed in Section \ref{section 3} are energetically preferable to those constructed in Proposition \ref{H7 prop} and Proposition \ref{H8 prop} (see also Figure \ref{fig:fig}). Here by microstructure we mean the shape of a building block which is repeated quasi-periodically in configurations of low energy for $I^\eps$. The period gets smaller with $\eps$. The preferred microstructure clearly depends on the position of the wells, that is on $z_1,z_2,z_3$, and on the cost of passing from one well to the other, that is on $E_0,E_1$. (H6) and (H7) reduce to checking that two cubic polynomials are non-negative on $\R_+.$ (H6)--(H8) can be verified easily with a computer and hold in a wide range of cases. We refer the reader to Section \ref{section 7} for more details and for a couple of examples.\\

The first result that we prove is a second $\Gamma-$limit for $\bar {\mathcal \EE}$, {that is a $\Gamma-$limit result for $I^\eps$}
\begin{thrm}
\label{main Thm}
Assume {\rm(H1)-(H8)}. Then $I^\eps(u,\nu)$ $\Gamma-$converges in the $L^2(0,1)\times L^\infty_{w*}(0,1;\mathcal M)$ topology to
$$
I^0(u,\nu) = 
\begin{cases}
A_0\Mint \nu_x(z_2)\,\mathrm dx+B_0 \Mint \nu_x(z_3)\,\mathrm dx ,\qquad &\text{if $u=0$, $\nu \in \mathrm{GYM}^\infty(0),$ $\supp\nu_x\subset \mathcal{Z}$ a.e. ,}\\ 
+\infty,\qquad &\text{otherwise}.
\end{cases}
$$
\end{thrm}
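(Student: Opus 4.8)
The proof consists of the two standard $\Gamma$-convergence inequalities; throughout, $\Phi$ denotes a primitive of $|W|^{1/2}$. \emph{Compactness and the $\liminf$ inequality where $I^0=+\infty$.} If $(u^\eps,\nu^\eps)\to(u,\nu)$ with $\liminf_\eps I^\eps(u^\eps)<\infty$, pass to a subsequence along which $I^\eps(u^\eps)$ is bounded. Then $\eps^{-2}\Mint(u^\eps)^2\,\mathrm dx$ and $\eps^{-2}\Mint W(u^\eps_x)\,\mathrm dx$ are bounded, so $u^\eps\to0$ in $L^2$ (whence $u=0$), $W(u^\eps_x)\to0$ in $L^1$, and by the coercive bound in (H2) the family $u^\eps_x$ is bounded in $L^p$. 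Hence $\nu^\eps=\delta_{u^\eps_x}$ is a bounded family of Young measures whose weak-$*$ limit $\nu$ has first moment $u_x=0$ and, by lower semicontinuity of $\mu\mapsto\Mint\langle\mu_x,W\rangle\,\mathrm dx$ together with $W\ge0$ and (H3), satisfies $\supp\nu_x\subset\mathcal Z$ a.e.; since $\mathcal Z$ is compact, $\nu\in\mathrm{GYM}^\infty(0)$. Thus $I^0$ can be finite only at such pairs, and the $\liminf$ inequality is trivial elsewhere.

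\emph{Recovery sequences.} On $\{\nu:\supp\nu_x\subset\mathcal Z\}$ the functional $\nu\mapsto A_0\Mint\nu_x(z_2)\,\mathrm dx+B_0\Mint\nu_x(z_3)\,\mathrm dx$ is weak-$*$ continuous (there $\nu_x\mapsto\nu_x(z_i)$ is realized by testing against continuous functions separating $z_1,z_2,z_3$), and every such $\nu$ with vanishing first moment is an $L^\infty_{w*}$-limit of piecewise-constant ones with the same properties; a diagonal argument therefore reduces the construction to $\nu$ piecewise constant, equal on each interval $J$ of constancy to $\theta_1\delta_{z_1}+\theta_2\delta_{z_2}+\theta_3\delta_{z_3}$ with $\theta_i\ge0$, $\sum_i\theta_i=1$, $\sum_i\theta_iz_i=0$. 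On $J$ we glue the two building blocks of Section~\ref{section 3}: on a subinterval of relative length $\theta_2z_{21}$ we place the ``$A_0$-profile'' — a sawtooth of slopes $z_1,z_2$, symmetric about $0$, of period of order $\eps$ and with the aspect ratio attaining the infimum defining $A_0$ — and on the complement, of relative length $\theta_3z_{31}$, the analogous ``$B_0$-profile'' with slopes $z_1,z_3$. The identity $\theta_2z_{21}+\theta_3z_{31}=1$, immediate from $\sum_i\theta_iz_i=0$, shows $J$ is covered exactly and that the associated gradient Young measure converges weak-$*$ to $\nu$ on $J$. The profiles are built with corners rounded on the transition scale of order $\eps^3$, so that $u^\eps\in V$; the finitely many junctions — at $x=0,1$, between consecutive intervals $J$, and between the $A_0$- and $B_0$-parts — are handled by local corrections costing $o(1)$. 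A direct computation then gives $\eps^{-2}\int_J\bigl(\eps^6(u^\eps_{xx})^2+W(u^\eps_x)+(u^\eps)^2\bigr)\,\mathrm dx\to|J|\,(A_0\theta_2+B_0\theta_3)$, hence $I^\eps(u^\eps,\nu^\eps)\to A_0\Mint\nu_x(z_2)\,\mathrm dx+B_0\Mint\nu_x(z_3)\,\mathrm dx=I^0(u,\nu)$.

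\emph{The $\liminf$ inequality where $I^0<+\infty$.} This is the core of the theorem. Let $(u^\eps,\nu^\eps)\to(0,\nu)$ with $I^\eps(u^\eps)$ bounded. The plan is to decompose $(0,1)$, for each fixed $\eps$, into finitely many ``building blocks'' — roughly the pieces cut out by the consecutive zeros and extrema of $u^\eps$, with blocks of negligible energy absorbed into their neighbours — and to bound $I^\eps$ from below block by block. On a block $B$ one uses $\eps^6(u^\eps_{xx})^2+W(u^\eps_x)\ge 2\eps^3\bigl|\bigl(\Phi(u^\eps_x)\bigr)_x\bigr|$, hence $\eps^{-2}\int_B\bigl(\eps^6(u^\eps_{xx})^2+W(u^\eps_x)\bigr)\,\mathrm dx\ge 2\eps\,\operatorname{Var}_{B}\bigl(\Phi(u^\eps_x)\bigr)$, and (H5) turns the right-hand side into a lower bound of order $\eps$ times the number and types of the excursions $u^\eps_x$ makes between neighbourhoods of $z_1,z_2,z_3$; this is combined with $\eps^{-2}\int_B(u^\eps)^2\,\mathrm dx$, controlled from below through the lengths and slopes of the nearly-affine portions of $u^\eps$ on $B$. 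Since $B$ begins and ends near $0$, the signed lengths of those portions must balance, which cuts the admissible ``types'' of efficient block — classified by the ordered list of wells visited by $u^\eps_x$ — down to a short explicit list; for each type, minimizing the resulting expression (a quadratic form in the portion lengths from the $L^2$-term plus a reciprocal term from the transitions) over the free lengths yields a per-block bound equal, up to errors summing to $o(1)$, to $A_0c_2(B)+B_0c_3(B)$, where $c_2(B),c_3(B)$ are the lengths of the parts of $B$ on which $u^\eps_x$ is near $z_2$, resp.\ $z_3$. For the two-well types this is exactly how $A_0$ and $B_0$ were defined; for the genuinely three-well types — those realised by the competitor microstructures against which (H6)--(H8) are checked (see Propositions~\ref{H7 prop} and \ref{H8 prop}) — the assumptions (H6)--(H8) are precisely the algebraic inequalities asserting that such a block is no cheaper than a mixture of pure $A_0$- and $B_0$-blocks carrying the same $c_2$ and $c_3$, so the same bound persists. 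Summing over blocks and letting $\eps\to0$ — using $\sum_Bc_2(B)\to\Mint\nu_x(z_2)\,\mathrm dx$ and likewise for $z_3$, by weak-$*$ convergence of $\nu^\eps$ — gives $\liminf_\eps I^\eps(u^\eps)\ge A_0\Mint\nu_x(z_2)\,\mathrm dx+B_0\Mint\nu_x(z_3)\,\mathrm dx$, which completes the proof.

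\emph{Main obstacle.} Everything delicate sits in the last step: (i) arranging the block decomposition so that it is stable in $\eps$ and the accumulated errors are $o(1)$ over the $\sim1/\eps$ blocks, where (H5) supplies the needed growth of $W$ away from the wells; (ii) the combinatorial classification of efficient block shapes and the proof that finitely many suffice; and (iii) matching the three-well shapes with the competitors against which (H6)--(H8) are verified (Propositions~\ref{H7 prop}--\ref{H8 prop}). If one of (H6)--(H8) fails, such a mixed block becomes cheaper than the corresponding mixture of pure blocks and $I^0$ is no longer the $\Gamma$-limit, in accordance with the remark in the introduction.
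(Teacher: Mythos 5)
Your proposal is correct and follows essentially the same route as the paper. The compactness step coincides with Proposition \ref{compattezza}; the recovery-sequence step (reduce to piecewise-constant $\nu$ by weak-$*$ continuity, then glue the $A_0$- and $B_0$-profiles using $\theta_2 z_{21}+\theta_3 z_{31}=1$) is the same reduction the paper performs via \cite[Remark 1.29]{Braides} and Proposition \ref{upper bnd}; and the $\liminf$ step is exactly the block-decomposition/per-block-minimisation/(H6)--(H8) strategy of Proposition \ref{sotto} and Theorem \ref{bound da sotto thm}. The one place where your sketch diverges in presentation from the paper is the block decomposition itself: you cut at the zeros and extrema of $u^\eps$, whereas the paper cuts via transition layers of $u^\eps_x$ (the $L$-intervals) and then pairs each with an auxiliary set $\Sigma_i$ living in the $z_1$-region, chosen so that the $\Sigma_i$'s are pairwise disjoint and disjoint from all $L$-intervals. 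This pairing is the technical device that makes the per-block $L^2$-estimates sum cleanly over $\sim 1/\eps$ blocks; your ``absorb negligible blocks into neighbours'' is the right instinct but would have to be fleshed out into something equivalent to the $\Sigma_i$-construction to actually close the argument. Likewise, your ``minimise over the free lengths'' step hides the genuine subtlety that the relevant quadratic form in the split parameters $\omega_i^a,\omega_i^b$ is indefinite for type III/IV blocks, so the minimum sits on the boundary of $[0,1]^2$ rather than at the centroid — this is precisely where (H7) and (H8) enter and why both are needed. These are refinements, not missing ideas; the structure of the proof you propose is the paper's.
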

We remark that, as $\nu\in \mathrm{GYM}^\infty(0),$ and $\supp\nu\subset \mathcal{Z}$ a.e., we must have 
$$\Mint \nu_x(z_3)\,\mathrm dx = z_{31}^{-1}-\frac{z_{21}}{z_{31}}\Mint \nu_x(z_2)\,\mathrm dx.$$ On the other hand $A_0<\frac{z_{21}}{z_{31}}B_0$, so that $I^0(0,\nu)$ is a linearly decreasing function of $\Mint \nu_x(z_2)$. Therefore, the minimum {of $I^0$ is attained at}
$$\Mint \nu_x(z_3)\,\mathrm dx=0,\qquad \Mint \nu_x(z_2)\,\mathrm dx=z_{21}^{-1}.$$ 
Thus minimizing sequences for $\mathcal E^\eps$ have gradients tending in measure to $\{z_1,z_2\}$, and $z_3$ is not seen in the limit. That is, the vanishing interfacial energy limit selects a unique minimizer out of the infinitely many minimizers of $\EE^0$.\\

As shown in Section \ref{remarks on ass}, (H7) and (H8) are necessary conditions to prove the above $\Gamma-$limit result. Nonetheless, it turns out that we can characterize the set of gradient Young measures generated by minimizing sequences for $I^\eps$, even without the second $\Gamma-$limit for $\bar {\mathcal \EE}$. This is the result of the following theorem, where also (H6) is relaxed:
{\begin{thrm}
\label{main Thm 2}
Assume {\rm(H1)--(H5)} and $z_3\leq 3|z_1|$. Then any sequence $u^{j}\in V$ of minimizers for $\E^{\eps_j}$, with $\eps_j\to0$, is such that $u^{j}\to 0$ in $L^2(0,1)$, $\delta_{u_x^{j}}\to \nu$ in $L^\infty_{w*}(0,1;\mathcal M)$, and $\nu\in \mathrm{GYM}^\infty(0)$ satisfies  
$$\supp\nu_x\in \{z_1,z_2\},\qquad\nu_x = \frac{z_2}{z_2-z_1}\delta_{z_1}-\frac{z_1}{z_2-z_1}\delta_{z_2},\qquad \text{a.e. $x\in(0,1)$.}  $$
\end{thrm}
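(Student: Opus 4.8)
The plan is to extract an \emph{a priori} energy bound from minimality and to match it with a lower bound on $I^\eps$ that forbids any asymptotic mass on the third well; the hypothesis $z_3\le 3|z_1|$ enters only in a final scalar comparison. Recall that $I^\eps(u)=\eps^{-2}\E^\eps(u)$, so a uniform bound on $I^{\eps_j}(u^j)$ is the same as $\E^{\eps_j}(u^j)\to0$. Let $(v^\eps,\mu^\eps)$ be the two‑well recovery sequence of Section~\ref{section 3} for $\tfrac{z_2}{z_2-z_1}\delta_{z_1}-\tfrac{z_1}{z_2-z_1}\delta_{z_2}$ (its construction uses only the wells $z_1,z_2$), so that $I^\eps(v^\eps)\to A_0z_{21}^{-1}=\min I^0$. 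By minimality $I^{\eps_j}(u^j)\le I^{\eps_j}(v^{\eps_j})$, hence $\E^{\eps_j}(u^j)\to0$; this gives at once $u^j\to0$ in $L^2(0,1)$, $\uint W(u^j_x)\to0$, and, by {\rm(H2)}, a uniform $W^{1,p}_0$‑bound on the $u^j$. Along a subsequence $u^j\rightharpoonup0$ in $W^{1,p}_0(0,1)$ and $\delta_{u^j_x}\to\nu$ in $L^\infty_{w^*}(0,1;\mathcal M)$; by {\rm(H5)}, $\uint W(u^j_x)\to0$ forces $\operatorname{dist}(u^j_x,\mathcal Z)\to0$ in measure, so $\supp\nu_x\subset\mathcal Z$ a.e., $\nu\in\mathrm{GYM}^\infty(0)$, and the barycenter of $\nu_x$ equals $0$ a.e. Writing $\nu_x=p_1(x)\delta_{z_1}+p_2(x)\delta_{z_2}+p_3(x)\delta_{z_3}$, the conditions $\sum_ip_i=1$ and $\sum_iz_ip_i=0$ are equivalent to $p_i\ge0$ together with $z_{21}p_2(x)+z_{31}p_3(x)=1$ a.e. In particular $p_3\equiv0$ forces $p_2\equiv z_{21}^{-1}$ and $p_1\equiv\tfrac{z_2}{z_2-z_1}$, which is exactly the asserted measure, and, the limit being then unique, the convergence holds for the full sequence. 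Everything is thus reduced to proving $p_3\equiv0$.

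Next I would establish a $\Gamma$‑liminf inequality $\liminf_jI^{\eps_j}(u^j)\ge\uint\psi\bigl(p_3(x)\bigr)\,\mathrm dx$, where on the constraint line $z_{21}p_2+z_{31}p_3=1$ the function $\psi$ is \emph{some} lower bound for the rescaled energy density of a configuration with $z_3$‑fraction $p_3$, normalised so that $\psi(0)=A_0z_{21}^{-1}$. Slice $(0,1)$ into mesoscopic windows $\eps_j\ll\delta\ll1$ and bound the contribution of $u^j$ on each window from below: the interfacial part via $\eps^6u_{xx}^2+W(u_x)\ge2\eps^3|u_{xx}|\sqrt{W(u_x)}$ and the co‑area formula, so that each effective $z_1\!\leftrightarrow\!z_2$ interface costs $\gtrsim\eps^3E_0$ and each $z_2\!\leftrightarrow\!z_3$ interface $\gtrsim\eps^3E_1$ ({\rm(H5)} taking care of the portions away from $\mathcal Z$); and the $\int u^2$ part via the up‑swing/down‑swing decomposition of $u^j$, using that descents see only the well $z_1$ and ascents only $z_2,z_3$, so that the number and the amplitudes of the swings on a window, hence $\int u^2$, are controlled from below by the local well‑fractions. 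Minimising the resulting finite‑dimensional problem over the geometric parameters (period, amplitude, the $z_2$–$z_3$ structure of the ascents) produces $\psi$. The decisive point is that, unlike for Theorem~\ref{main Thm}, we never need the \emph{sharp} relaxed density: any lower bound $\psi$ that still strictly exceeds $A_0z_{21}^{-1}$ as soon as its argument is positive will do, which is precisely why {\rm(H6)--(H8)} may be dropped here.

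It then remains to check $\psi(t)>A_0z_{21}^{-1}$ for $t\in(0,z_{31}^{-1}]$. This is where $z_3\le 3|z_1|$ comes in: the steeper slope $z_3$ lets $u$ gain a given height over a shorter span, but $z_3$ is reachable only through $z_2$ (an extra $E_1$ of interfacial cost per interface) and it forces larger local amplitudes, hence larger $\int u^2$, unless paid for by yet more interfaces; $z_3\le 3|z_1|$ is exactly the inequality making this trade‑off never profitable, and it reduces $\psi(t)>A_0z_{21}^{-1}$ to an elementary comparison between the relevant one‑dimensional infima (a weaker analogue of {\rm(H6)--(H8)}). Granting it, the preceding step and the \emph{a priori} bound give $\uint\psi(p_3)\le A_0z_{21}^{-1}=\uint\psi(0)$ while $\psi\ge A_0z_{21}^{-1}$ pointwise on the constraint line; hence $\psi(p_3(x))=A_0z_{21}^{-1}$ a.e., so $p_3\equiv0$, and the proof is complete.

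The main obstacle is to produce, under only {\rm(H1)--(H5)}, a lower bound valid for \emph{every} $z_3$‑using local configuration --- not merely the quasi‑periodic building blocks of Section~\ref{section 3} or of Propositions~\ref{H7 prop}--\ref{H8 prop} --- that still beats $A_0z_{21}^{-1}$ using the single extra input $z_3\le 3|z_1|$. Morally this is the difficult half of the $\Gamma$‑liminf for $I^\eps$, but it is genuinely lighter here because we are allowed to be wasteful: only the threshold $A_0z_{21}^{-1}$ matters, not the exact density. Where a clean pointwise bound is unavailable one may instead invoke minimality directly through a cut‑and‑paste surgery --- excise the $z_3$‑rich windows of $u^j$, splice in the optimal two‑well profile, and control both the $H^2$‑matching at the cuts and the resulting change in $\int u^2$ --- obtaining a strictly cheaper competitor and contradicting that $u^j$ minimises $\E^{\eps_j}$.
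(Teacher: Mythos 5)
Your overall plan is the right one, and it matches the paper's strategy: use minimality and the two-well recovery sequence of Proposition~\ref{upper bnd} to force $I^{\eps_j}(u^j)\to A_0 z_{21}^{-1}$ and obtain compactness (this recovers the content of Proposition~\ref{compattezza}); reduce the claim to showing $p_3\equiv0$; and observe, correctly, that for this purpose a \emph{non-sharp} lower bound on the rescaled energy suffices, which is exactly why (H6)--(H8) can be dispensed with. You also correctly locate where $z_3\le3|z_1|$ must enter --- as a scalar inequality in the final comparison.

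However, the proposal leaves unproved precisely the step that carries the technical weight. You say ``Next I would establish a $\Gamma$-liminf inequality\dots'' and then describe a mesoscopic slicing plus swing-decomposition strategy, but you do not carry it out, and you explicitly flag it as ``the main obstacle,'' hedging with a cut-and-paste alternative that is likewise only sketched. This missing step is not a routine application of Modica--Mortola: the difficulty, which the paper's Section~\ref{section 4} is entirely devoted to, is that a configuration entering $z_3$ can do so through several genuinely different microstructures (the $D$-intervals of types I--IV in Definition~\ref{GliDIntervals}) that yield \emph{different} nonlinear lower bounds in the local volume fractions $\ai,\bi$, and one has to handle each case and then show that each can still be minorised by an affine function in $\bi/\ai$. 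Concretely, the paper proves in Proposition~\ref{sotto} lower bounds with densities $f_6^{1/3},f_7^{1/3},f_8^{1/3}$ depending on the case, and then in Theorem~\ref{ThmScaling} shows that under $z_3\le3|z_1|$ each satisfies $f_i^{1/3}(y)\ge A_0+KA_0\,y$ with $K:=\frac{z_{31}}{z_{21}}\sqrt[3]{(E_0+E_1)^2/E_0^2}>1$; the hypothesis $z_3\le3|z_1|$ is used only for $f_6$, where (H6) itself could fail. Summing and using the global constraints \eqref{ineq 001}--\eqref{ineq 002} gives a lower bound decreasing in $\lambda_2^\eta$, hence $\ge A_0 z_{21}^{-1}-c\eta$, and comparing with the upper bound pins $\lambda_3^\eta$ to $O(\eps^\xi)$. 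Note also that the paper's route is a \emph{global} bound in $\lambda_2^\eta,\lambda_3^\eta$ rather than the pointwise-in-$x$ inequality $\liminf I^{\eps_j}\ge\int\psi(p_3)$ you propose; the global version is weaker but sufficient, and somewhat easier to obtain than a localised $\Gamma$-liminf density. To complete your proof you would need either to reproduce the $D$-interval/transition-layer machinery of Lemma~\ref{corto} and Proposition~\ref{sotto} (or an equivalent case analysis), or to develop the cut-and-paste surgery in enough detail to control the $H^2$-matching error at the cuts and the induced change in $\int u^2$; neither is done in the proposal as written.
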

}
{In this way we have shown that, in our case, even if the set of gradient Young measures minimizing $\EE^0$ has infinitely many elements, its subset generated by minimizers for $\E^\eps$, which are also minimizers for the regularized and rescaled problem $I^\eps$, contains just one element.}\\
Therefore, the one-dimensional model problem studied in this paper confirms that vanishing interface energy can be used as a tool to select minimizing gradient Young measures. This suggests that for the three-dimensional problem $E$ the set $\mathcal{B}$ is actually much smaller than $\mathcal A$. Furthermore, our results show that the shape of the second $\Gamma-$limit for $E^\eps$ might change with the shape of $\phi$. Nonetheless, as in our model problem, it might be possible to characterize $\mathcal B$ independently of the second $\Gamma-$limit for $E^\eps$.\\

The plan for the paper is the following: in Section \ref{section 2} we prove Proposition \ref{GL general}, in Section \ref{section 3} and \ref{section 4} we compute some upper and lower bounds for $I^\eps$. Section \ref{section 5} is devoted to prove Theorem \ref{main Thm}, while Section \ref{Che bound} is devoted to prove Theorem \ref{main Thm 2}. Finally, in Section \ref{remarks on ass} we sketch necessity of (H7)--(H8) and give an example where (H7)--(H8) hold, and one where they don't.\\

In the following sections we will denote by $c$ a generic positive constant depending only on the parameters of the problem, and not on the quantities $N,M,N_\eps,M_\eps,\eta,\eps,\mu,j,\sigma$ appearing below. Its value may change from line to line or even within the same line.

\section{Proof of the first $\Gamma-$limit}
\label{section 2}
In this section we prove Proposition \ref{GL general}.\\

We first observe that, as $\bar \EE^\eps (u,\nu)$ is a monotone sequence in $\eps$, the $\Gamma-$limit exists and is given by the lower semicontinuous envelope of the pointwise limit of the sequence (cf. \cite[Remark 1.40]{Braides}). That is, the $\Gamma-$limit is given by
\beq
\label{G1}
sc\begin{cases}
\Mint \bigl(\langle \nu_x,W \rangle +u^2\bigr) \mathrm{d}x,&\qquad\text{if }u\in V,\nu_x=\delta_{u_x(x)} \text{ a.e. in $(0,1)$},\\
+\infty, &\qquad\text{otherwise},
\end{cases}
\eeq
where $sc$ denotes the lower semicontinuous envelope with respect to the topology $L^2(0,1)\times L^\infty_{w^*}(0,1;\mathcal{M})$. We first claim that \eqref{G1} is equal to
\beq
\label{G1 bis}
sc\begin{cases}
\Mint \bigl(\langle \nu_x,W \rangle +u^2\bigr) \mathrm{d}x,&\qquad\text{if }u\in W^{1,p}(0,1),\nu_x\in \mathrm{GYM}^p(u),\\
+\infty, &\qquad\text{otherwise},
\end{cases}
\eeq
that is we can relax the requirements $u\in V,\nu_x=\delta_{u_x(x)}$ a.e. in $(0,1)$. Indeed, given an $u\in W^{1,p}_0(0,1)$, we can approximate it by $u^j\in H^2(0,1)$ such that $u^j\to u$ strongly in $W^{1,p}_0(0,1)$. Therefore, by passing into the limit as $j$ tends to $\infty$ we can drop the requirement $u\in H^2(0,1)$ in \eqref{G1}. Now, let $\nu\in \mathrm{GYM}^p(u)$ for some $u\in W^{1,p}_0(0,1)$. Then by \cite[Thm. 8.7]{Pedregal} we know the existence of a sequence $u^j\in W^{1,p}(0,1)$ converging weakly to $u$ in $W^{1,p}(0,1)$, strongly in $L^2(0,1)$, such that $\delta_{u^j_x}$ converges to $\nu$ in $L^\infty_{w^*}(0,1;\mathcal{M})$. Thanks to \cite[Lemma 8.3]{Pedregal} the sequence can actually be chosen in $W^{1,p}_0(0,1)$. Therefore, the fact that (cf. \cite[Thm. 6.11]{Pedregal})
$$
\liminf_j \Mint \langle\delta_{u^j_x},W \rangle\,  \mathrm{d}x \geq\Mint \langle\nu_x,W \rangle \, \mathrm{d}x,
$$
allows us to drop also the requirement on $\nu$ that $\nu_x=\delta_{u_x(x)}$ for a.e. $x\in(0,1)$, concluding the proof that \eqref{G1} is equal to \eqref{G1 bis}. We now claim that we can drop $sc$ from \eqref{G1 bis}, that means, that 
\beq
\label{G1 tris}
\begin{cases}
\Mint \bigl(\langle \nu_x,W \rangle +u^2\bigr) \mathrm{d}x,&\qquad\text{if }u\in W^{1,p}(0,1),\nu_x\in \mathrm{GYM}^p(u),\\
+\infty, &\qquad\text{otherwise},
\end{cases}
\eeq
is already lower semicontinuous in the $L^2(0,1)\times L^\infty_{w^*}(0,1;\mathcal{M})$ topology.
To prove this claim, it is sufficient to show that for every sequence $(u_j,\nu^j)\in L^2(0,1)\times \mathrm{GYM}(u_j)$ converging to $(u,\nu)$ in $L^2(0,1)\times L^\infty_{w^*}(0,1;\mathcal{M})$, we have $\liminf_j \EE^0(u_j,\nu^j) \geq\EE^0(u,\nu)$. We will follow the approach devised in \cite{BallKoumatos}. If $\liminf_j\EE^0(u_j,\nu^j)=\infty$, the thesis follows trivially. Therefore, by passing without loss of generality to a subsequence, we can assume $\EE^0(u_j,\nu^j)\leq C$. By (H2), this implies that
$$
\Mint \langle \nu^j_x,|\cdot|^p\rangle \,\mathrm dx\leq C,
$$
and, by \cite[Thm. 3.6]{Sychev}, we deduce that $\nu_x$ is a probability measure for almost every $x\in(0,1)$. Jensen's inequality and the fact that $|\cdot|^p$ is convex yield
$$
\Mint |\bar\nu^j_x|^p\,\mathrm dx\leq \Mint \langle \nu^j_x,|\cdot|^p\rangle\,\mathrm dx \leq C,\qquad \text{where}\qquad \bar\nu^j := \int_\R s\,\mathrm d\nu^j(s) .
$$
It follows that $\bar\nu^j\rightharpoonup \bar\nu$ in $L^p(0,1)$ and, therefore, that $u_j\rightharpoonup u$ in $W^{1,p}(0,1)$, where $u_x = \bar\nu$. A result like the one in \cite[Prop. 4.5]{Sychev} finally gives us that $\nu\in \mathrm{GYM}^p(u)$. At this point, an application of \cite[Prop. 3.7]{Sychev} allows us to deduce that $\liminf_j \EE^0(u_j,\nu^j) \geq\EE^0(u,\nu)$, thus concluding the proof.

\begin{rmrk}
{\rm Following the same strategy it is actually possible to prove that $E^\eps$ and $\tilde{E}^\eps$ $\Gamma-$converge in the $L^1(\Omega)\times L^\infty_{w^*}(\Omega;\mathcal M_1(\R^{3\times3}))$ topology to $\bar E$ as $\eps\to0$.}
\end{rmrk}

\section{Construction of an upper bound}
\label{section 3}

In this section we prove the following proposition:
\begin{prpstn}
\label{upper bnd}
{Assume {\rm(H1)--(H5)}, let $n\in\mathbb{N}$, $n\geq 2$, and let $0=x_1< x_2 < \dots < x_n = 1$ be a partition of $[0,1]$. There exist $\zeta>0$ and $\eps_0=\eps_0(\min_i (x_{i+1}-x_i))>0$, such that for every 
$\eps\leq\eps_0$ we can find $u\in V$ with
\beq
\label{sup goal}
\begin{split}
\int_{x_i}^{x_{i+1}} \bigl(\eps^4 u^2_{xx}+ \eps^{-2}W(u_x)+ \eps^{-2}u^2\bigr)\,\mathrm dx\leq A_0 z_{21}^{-1}(x_{i+1}-x_{i}) + c\eps^\zeta,\qquad\text{for $i$ odd,}\\
\int_{x_i}^{x_{i+1}} \bigl(\eps^4 u^2_{xx}+ \eps^{-2}W(u_x)+ \eps^{-2}u^2\bigr)\,\mathrm dx\leq B_0 z_{31}^{-1}(x_{i+1}-x_{i}) + c\eps^\zeta,\qquad\text{for $i$ even.}
\end{split}
\eeq
Furthermore, for every $\sigma\in(\eps^\frac{1}{\max\{3,q\}},\eta_0)$,
\beq
\label{da combino}
\begin{split}
\bigl| \mathscr{L}\bigl( (x_i,x_{i+1})\cap\{| u_x-z_2|\leq \sigma\} \bigr) - z_{21}^{-1}(x_{i+1}-x_i)\bigr| + \bigl| \mathscr{L}\bigl( (x_i,x_{i+1})\cap\{| u_x-z_3|\leq \sigma\} \bigr) \bigr|\leq c \eps^\zeta,
\\
\bigl| \mathscr{L}\bigl( (x_i,x_{i+1})\cap\{| u_x-z_2|\leq \sigma\} \bigr) \bigr| + \bigl| \mathscr{L}\bigl( (x_i,x_{i+1})\cap\{| u_x-z_3|\leq \sigma\} \bigr) - z_{31}^{-1}(x_{i+1}-x_i)\bigr|\leq c \eps^\zeta,
\end{split}
\eeq
respectively when $i$ is odd and $i$ is even.
}
\end{prpstn}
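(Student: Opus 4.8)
The plan is to construct $u$ explicitly, interval by interval, as a quasi-periodic "building block" function whose derivative oscillates between two adjacent wells. On an odd interval $(x_i,x_{i+1})$ the block uses the wells $z_1$ and $z_2$; on an even interval it uses $z_1$ and $z_3$. I will describe the odd case, the even case being identical with $z_2$ replaced by $z_3$, $E_0$ by $E_0+E_1$, and $A_0$ by $B_0$. Fix a small length scale $d\eps$ (the period of one oscillation), where $d$ is the minimizer appearing in the definition of $A_0$; inside each period of length $d\eps$, $u_x$ should spend a fraction $\tfrac{z_{21}^{-1}}{1}$... more precisely, to keep $\langle\nu_x,\mathrm{id}\rangle$-type averages consistent with $u$ being close to $0$, on each period $u_x$ takes the value $z_1$ on a sub-interval and $z_2$ on the complementary sub-interval, with the length ratio chosen so that the average of $u_x$ over the period is $0$; this forces the $z_2$-fraction to be $z_{21}^{-1} = (1-z_2/z_1)^{-1}$ of... wait, one checks the $z_1$-fraction is $1 - z_{21}^{-1}\cdot(\text{something})$; in any case the proportion is dictated by $\int u_x = 0$ and is exactly what produces the factor $z_{21}^{-1}(x_{i+1}-x_i)$ in front of $A_0$.

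The three energy contributions are then estimated term by term on one period. The potential term $\eps^{-2}\int W(u_x)$: $u_x$ is at a well on most of the period and transits between wells on boundary layers of width $\sim\eps$; choosing the transition profile to be (a rescaled version of) the optimal heteroclinic-type connection, i.e.\ essentially solving $\eps^3 u_{xx} \approx \pm W(u_x)^{1/2}$ across the layer, makes each transition cost $\eps^{-2}\cdot \eps\cdot\! \int W^{1/2} \sim \eps^{-1}\!\cdot(\text{const})$; since there are $\sim (d\eps)^{-1}$ periods in the interval, this sums to an $O(1)$ quantity, and the constant works out to the $E_0 d^{-1}$ piece after accounting for the two transitions ($z_1\!\to\!z_2$ and back) per period, each costing $\tfrac12 E_0$... actually each full up-down contributes $E_0 = 2\int_{z_1}^{z_2} W^{1/2}$. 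The curvature term $\eps^4\int u_{xx}^2$: on a transition layer of width $\sim\eps$ with a jump of size $O(1)$ in $u_x$, $u_{xx}\sim\eps^{-1}$, so $\eps^4 u_{xx}^2\sim\eps^2$ over a length $\eps$, i.e.\ $O(\eps^3)$ per transition; multiplied by $\sim(d\eps)^{-1}$ periods this is $O(\eps^2)$, negligible. The term $\eps^{-2}\int u^2$: here $u$ itself is a sawtooth of amplitude $\sim d\eps$ (since $u_x = O(1)$ over a period of length $d\eps$ and $u$ returns near $0$ each period because the mean of $u_x$ vanishes), so $u^2\sim(d\eps)^2$ over each period, giving $\eps^{-2}\cdot(d\eps)^2\cdot(d\eps)^{-1}\cdot$(interval length)$\sim d^2\eps\cdot(\text{length})$... one must be careful: the actual contribution is $\eps^{-2}\int u^2 \approx \tfrac13 z_2^2 z_{21} d^2\cdot(x_{i+1}-x_i)$ after computing the mean square of the sawtooth, and it is $O(1)$, not small. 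Adding the potential and the $u^2$ terms gives $\bigl(\tfrac13 z_2^2 z_{21} d^2 + E_0 d^{-1}\bigr)(x_{i+1}-x_i)$ times a factor, and optimizing in $d$ yields exactly $A_0 z_{21}^{-1}(x_{i+1}-x_i)$ (the $z_{21}^{-1}$ and the normalization of $A_0$'s infimum have to be tracked, but it is bookkeeping). The error terms collect into $c\eps^\zeta$ for some $\zeta>0$; the dominant error comes from the fractional number of periods that don't fit exactly into $(x_i,x_{i+1})$ and from the boundary layers, and $\eps_0$ must be small enough (depending on $\min_i(x_{i+1}-x_i)$) that each interval contains many periods, so that the "one spare period" error is $O(\eps)$ relative to the leading term — hence a positive power $\zeta$.

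For the measure estimates \eqref{da combino}: by construction $u_x$ is within $\sigma$ of $z_2$ precisely on the union of the "$z_2$-plateaus", whose total length is the designed fraction $z_{21}^{-1}(x_{i+1}-x_i)$ up to the boundary-layer corrections and the spare-period error; and $u_x$ never comes within $\sigma$ of $z_3$ on an odd interval (since it only visits $z_1$ and $z_2$, and $\sigma<\eta_0<\tfrac{z_3-z_2}{2}$), except possibly on transition layers, which have total length $O(\eps)$. This is where hypothesis (H5) enters: coercivity of $W$ near the wells, together with the energy bound, is what lets me control how much of $(x_i,x_{i+1})$ the \emph{constructed} $u_x$ spends at intermediate values — on the transition layers one has $|u_x - z_j|\le\sigma$ only where $W(u_x)\lesssim\sigma^q$, and the layer geometry gives length $O(\sigma)$ per layer; summing over $O((d\eps)^{-1})$ layers gives $O(\sigma/\eps)$, which is why $\sigma$ is taken $\ge\eps^{1/\max\{3,q\}}$ is \emph{not} the constraint for the upper bound direction but rather ensures the measure statement has the right error exponent; I would double-check the exponent arithmetic here.

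The main obstacle I expect is the precise optimization and error accounting in the $u^2$ term: unlike the classical Modica–Mortola / double-well setting where $u$ is essentially constant, here $u$ genuinely oscillates with amplitude tied to the period $d\eps$, so the leading-order energy is \emph{not} just the interfacial cost $E_0 d^{-1}$ but the sum $\tfrac13 z_2^2 z_{21}d^2 + E_0 d^{-1}$, and getting the constant $\tfrac13 z_2^2 z_{21}$ exactly right (so that the infimum over $d$ reproduces $A_0 z_{21}^{-1}$) requires computing $\int_0^1 (\text{normalized sawtooth})^2$ carefully and matching the amplitude of the sawtooth to the plateau fractions. Everything else — the curvature estimate, the heteroclinic transition cost, the counting of periods — is routine once the block is fixed, but it all has to be assembled so the residuals genuinely carry a uniform positive power $\zeta$ of $\eps$ independent of the partition (only $\eps_0$ may depend on $\min_i(x_{i+1}-x_i)$).
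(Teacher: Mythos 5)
Your proposal captures the high-level structure of the paper's proof: build a quasi-periodic sawtooth oscillating between two adjacent wells, use the Modica--Mortola equality to evaluate the interfacial term, compute the $u^2$ term as $\frac13(z_2^2\alpha^3+z_1^2\gamma^3)l_N^3$ per period, optimize the period length $d\eps$, and use the (H5) coercivity to control the boundary-layer width for the measure estimate. All of that matches the paper.

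However, there is a genuine gap in how you treat the even intervals. You assert "the even case being identical with $z_2$ replaced by $z_3$, $E_0$ by $E_0+E_1$." That substitution fails because $W(z_2)=0$ sits between $z_1$ and $z_3$: the heteroclinic ODE $\eps^3 w_x = \sqrt{W(w)}$ started near $z_1$ either takes infinite ``time'' to reach $z_2$ (if $q\ge 2$), or is non-unique and stalls there (if $q<2$). So there is no single ODE solution connecting $z_1$ to $z_3$, and the naive replacement produces a function whose gradient never reaches $z_3$. The paper's proof explicitly flags this and builds a composite profile: the $z_1\!\to\!z_2$ heteroclinic $\hat w$, followed by a short linear bridge of slope $\mu^{-\theta}$ of length $2\mu^{\theta+1}$ crossing the stalling region $[z_2-\mu,z_2+\mu]$ with $\mu=\eps^{2/(\max\{3,q\}-2)}$, followed by the $z_2\!\to\!z_3$ heteroclinic $\tilde w$. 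The extra energy contributed by the bridge must then be shown to be $o(1)$, which is a real piece of the argument that your outline does not contain.

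A secondary omission: on an interval of $V=H^2\cap W^{1,p}_0$, you need $u$ to be globally $H^2$, so the profile constructed on $(x_i,x_{i+1})$ must match the one on $(x_{i-1},x_i)$ in $C^1$. The paper inserts a special joining piece (the piecewise-linear $w$ on $(y_0,y_1)$) to glue the odd and even constructions continuously while keeping the mean of $u_x$ zero and the joining piece's energy $O(\eps)$. Your proposal does not address this matching, and without it the constructed function is not in $V$. The remaining points you raise as possible concerns (the exact constant $\frac13 z_2^2 z_{21}$, the spare-period error, the $\sigma$-exponent in the measure estimate) are indeed the right things to worry about and are handled in the paper exactly as you anticipate; they are not gaps in your outline, just unverified bookkeeping.
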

\begin{proof}
Here we generalise the approach devised in \cite{Muller1}. {For simplicity, we prove the statement assuming $n=3$ and $x_2=l_0$ for some $l_0\in(0,1).$ Let us also define $\lambda_2,\lambda_3$ as $\lambda_2:=z^{-1}_{21}l_0$ and $\lambda_{3}:=z_{31}^{-1}(1-l_0)$. We first construct the bit of $u$ with energy $A_0\lambda_{2}$ in $(0,l_0)$, and then use the same argument to construct on $(l_0,1)$ the bit of $u$ which has energy $B_0\lambda_3$.} 
\\

We start by splitting the interval $(0,l_0)$ into $N$ pieces of length $l_N:=\frac{z_{21}\lambda_{2}}{N}=\frac {l_0} N$. Let us also consider $\hat w (x)$, solution of 
\beq
\label{ODE}
\eps^3\hat w _x = \sqrt {W(\hat{w})},\qquad \hat w(0)=0.
\eeq
Standard ODE theory tells us that $\hat w$ exists, and that $\hat w$ is strictly increasing with $x$ when $\hat w(x)\in (z_1,z_2)$. We point out that, in case $q<2$, the solution might not be unique. In this case, when solutions encounter $z_1$ or $z_2$ we choose the one that stays bounded in $[z_1,z_2]$ and does not decrease/increase further. As $\hat w(x-\omega)$ still satisfies the equation in \eqref{ODE} for every $\omega\in\R$, we will choose $\omega=\omega^*$ so that 
\beq
\label{Fstar}
F(\omega^*):=\int_0^{l_N} \hat w(s-\omega^*)\,\mathrm d s = 0.
\eeq
Indeed, this is possible as $F$ is negative for $\omega\to\infty$, positive when $\omega\to-\infty$, continuous and decreasing. Now we define $w$ as
\[
w(x) = 
\begin{cases}
\hat w (x-\omega^*-x_i),\qquad &\text{if $x\in(x_i,x_{i+1})$ when $i$ even},\\
\hat w (x_{i+1}-\omega^*-x),\qquad &\text{if $x\in(x_i,x_{i+1})$ when $i$ odd},
\end{cases}
\]
where $x_i:=il_N$ for $i=0,\dots,N$. We are now ready to construct $u$ as
\beq
\label{uuu}
u(x):=\int_0^x w(s)\,\mathrm ds,
\eeq
and to notice that, by \eqref{Fstar}, $u(x_i)=0$ for each $i=0,\dots,N$. By \eqref{ODE} we have
$$
\int_{x_i}^{x_{i+1}} \bigl(\eps^4 u^2_{xx}+ \eps^{-2}W(u_x)\bigr)\,\mathrm dx = 2 \eps \int_{x_i}^{x_{i+1}} |W(u_x)|^{\frac12}|u_{xx}|\,\mathrm dx \leq 2 \eps\int^{z_2}_{z_1}|W(s)|^{\frac12}\mathrm{d}s =  \eps E_0.
$$
On the other hand, called $x^*_i$ the point in $(x_i,x_{i+1})$ such that $u_x(x^*_i)=0$, and assuming without loss of generality that $u_x>0$ in $(x_i,x_{i}^*)$ (the case $u_x<0$ is similar), we have
\beq
\label{energia u2 ref}
\int_{x_i}^{x_{i+1}} u^2\,\mathrm dx\leq \int_{x_i}^{x^*_i}\bigl(z_2 (x-x_i)\bigr)^2\mathrm dx+ \int_{x_i^*}^{x_{i+1}}\bigl(z_1 (x_{i+1}-x)\bigr)^2\mathrm dx = 3^{-1}\bigl(z_2^2\alpha^3 + z_1^2\gamma^3\bigr)l_N^3,
\eeq
where 
$$
\alpha:=\mathscr{L}\bigl((x_0,x_1)\cap \{w\geq 0 \}\bigr)l_N^{-1},\qquad \gamma:=\mathscr{L}\bigl((x_0,x_1)\cap \{w < 0 \}\bigr)l_N^{-1}.
$$
Therefore,
\beq
\label{stima sup 11}
\begin{split}
\int_{0}^{l_0} \bigl(\eps^4 u^2_{xx}+ \eps^{-2}W(u_x)+\eps^{-2}u^2\bigr)\,\mathrm dx\leq N\biggl(3^{-1}\eps^{-2}\bigl(z_2^2\alpha^3 + z_1^2\gamma^3\bigr)l_N^3+\eps E_0\biggr)\\
=  l_0 \biggl(3^{-1}\bigl(z_2^2\alpha^3 + z_1^2\gamma^3\bigr) d_\eps^2+ E_0 d_\eps^{-1}\biggr),
\end{split}
\eeq
where $d_\eps = \frac{l_N}{\eps}$. Now, chosen $\eta\in(0,\eta_0)$, with $\eta_0$ as in (H4), we notice that
\beq
\label{sup diff}
0 = \int_0^{l_N} \hat w(s-\omega^*)\,\mathrm ds\leq (z_2\alpha+ (z_1+\eta)\gamma)l_N + r, 
\eeq
with $r:=-(z_1+\eta)\mathscr{L}(\{s\colon \hat w(s-\omega^*)\in(z_1+\eta,0)\})$. But $r$ can be estimated as follows: we can rewrite \eqref{ODE} in terms of $\hat{v}:=\hat{w}-z_1$ as
$$
\eps^3\hat v_y = -\sqrt{W(\hat v + z_1)}, \qquad \hat v (0) = - z_1,
$$
where we also made the change of variable $y=-x$. Now, called $y^*$ the point in $\R_+$ where $\hat v(y^*)= \eta_0$, by (H5) we have
$$
\eps^3\hat v_y(y) \leq - \hat c,\qquad\text{for all $y\in(0,y^*],$}
$$
for some $\hat c>0$. After an integration in $y$ between $0$ and $y^*$, this leads to $y^*\leq c\eps^3.$ In the same way, when $\eta\leq\hat v<\eta_0$, (H5) implies
$$
\eps^3\hat v_y \leq - c_0|\hat v|^{\frac q2}\leq- c_0|\hat v|^{\frac {\max \{3, q\}} 2}.
$$
Let us now denote by $\tilde y$ the point in $\R^+$ such that $\hat v(\tilde y)=\eta$. An integration between $y^*$ and $\tilde y$ yields $\tilde y-y^*\leq c\eps^3\eta^{1-\frac {\max \{3, q\}} 2}.$ Thus, as $\mathscr{L}(\{s\colon \hat w(s-\omega^*)\in(z_1+\eta,0)\})= \tilde{y}$, we have obtained
\beq
\label{chenneso}
|r|\leq |z_1|\tilde y\leq c\eps^3\eta^{1-\frac {\max \{3, q\}} 2}.
\eeq
This together with \eqref{sup diff} thus imply
\beq
\label{stima eq 00}
\gamma \leq \frac{z_2}{|z_1|}\alpha +c\bar r ,
\eeq
where $\bar r := \eta + N\eps^3\eta^{1-\frac {\max \{3, q\}} 2}.$  
On the other hand, 
$$
0 = \int_0^{l} \hat w(s-\omega^*)\,\mathrm ds\geq ((z_2-\eta)\alpha+ z_1\gamma)l_N + r_1, 
$$
where now $r_1:=(z_2-\eta)\mathscr{L}(\{s\colon \hat w(s-\omega^*)\in(0,z_2-\eta)\})$. By arguing as to get \eqref{stima eq 00}, we have
$$
\frac{z_2}{|z_1|}\alpha\leq\gamma + c\bar r, 
$$
and, as $\alpha+\gamma = 1$, by \eqref{stima eq 00} we thus deduce $\bigl|\alpha - \frac{z_1}{z_1-z_2}\bigr| \leq  c\bar r.$ The fact that, by construction, $l_0\frac{z_1}{z_1-z_2} = \lambda_2$ also implies 
\beq
\label{equivalgono123}
\lambda_2 - c \bar r\leq \alpha l_0\leq\lambda_2 + c \bar r.
\eeq 
We now choose $N$ as the smallest even integer larger than $l_0(\eps d^*)^{-1}$, where $$d^*=(3E_0)^{\frac13}\Bigl(2\lambda_2^3 l_0^{-3} z_2^2\Bigl(1 - \frac{z_2}{z_1}\Bigr)\Bigr)^{-\frac13}.$$ In this way, $d_\eps\leq d^*$, $d_\eps^{-1}\leq c\eps + (d^*)^{-1}$ and $N\leq c\eps^{-1}$. Let us also choose $\eta = \eps^{\frac4{\max\{3,q\}}}$, so that $\bar{r}\leq c\eps^{\frac4{\max\{3,q\}}}$ and $\eps^{\frac4{\max\{3,q\}}}<1$ for each $\eps\leq\eps_0<1$. By exploiting \eqref{stima eq 00}--\eqref{equivalgono123} in \eqref{stima sup 11} we thus get
\beq
\label{res 1}
\begin{split}
\int_{0}^{l_0} \bigl(\eps^4 u_{xx}^2+ \eps^{-2}W(u_x)+\eps^{-2}u^2\bigr)\,\mathrm dx
&\leq l_0 \biggl(3^{-1}\alpha^3 z_2^2\bigl(1 - \frac{z_2}{z_1}\bigr) d_\eps^2+ E_0 d_\eps^{-1}\biggr)+ c\tilde{r}
\\
&\leq l_0 \biggl(3^{-1}\lambda_2^3l_0^{-3} z_2^2\bigl(1 - \frac{z_2}{z_1}\bigr) d_\eps^2+ E_0 d_\eps^{-1}\biggr)+ c\tilde{r}\\
&\leq l_0 \biggl(3^{-1}\lambda_2^3l_0^{-3} z_2^2\bigl(1 - \frac{z_2}{z_1}\bigr) (d^*)^2+ E_0 (d^*)^{-1}\biggr)+ c\tilde{r}
\\
&\leq \lambda_{2} A_0 + c\tilde{r}
.
\end{split}
\eeq
Here and below $\tilde{r}:=\eps^{\frac4{\max\{3,q\}}}+\eps$. We remark that $\alpha$ depends on $\eps$, but for every $\sigma\in(\eps^{\frac{1}{\max\{3,q\}}},\eta_0)$, we have that 
\beq
\label{eq 3star}
 \mathscr{L}\bigl( (0,l_0)\cap\{| u_x-z_2|\leq \sigma\} \bigr) = \alpha l_0 - R,  
 \eeq
where $R = N \mathscr{L}((x_0,x_1)\cap \{s\colon 0<w(s)<z_2-\sigma)\}$. By arguing as in the proof of \eqref{chenneso} with $\eta$ replaced by $\sigma$, we have that $|R|\leq c N\sigma^{1-\max\{3,q\}}\eps^3 \leq c \sigma^{-\max\{3,q\}}\eps^2$. Thus, since we assumed $\sigma\geq \eps^{\frac{1}{\max\{3,q\}}}$, we deduce $|R|\leq c\eps$. Therefore, by recalling \eqref{equivalgono123} with $\eta=\eps^{\frac{4}{\max\{3,q\}}}$, from \eqref{eq 3star} we finally obtain
\beq
\label{motescrivo} \bigl| \mathscr{L}\bigl( (0,l_0)\cap\{| u_x-z_2|\leq \sigma\} \bigr) - \lambda_{2}\bigr| \leq c \eps^{\frac{4}{\max\{3,q\}}}+c\eps.  
\eeq

\begin{figure}
\begin{subfigure}{.5\textwidth}
  \centering
  \includegraphics[width=.99\linewidth]{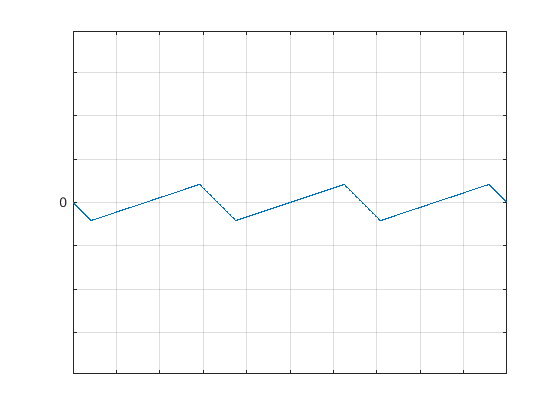}
  \caption{}
  \label{fig00001}
\end{subfigure}%
\begin{subfigure}{.5\textwidth}
  \centering
  \includegraphics[width=.99\linewidth]{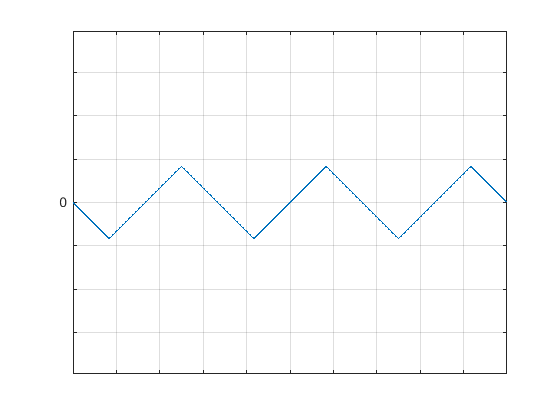}
  \caption{}
  \label{fig00002}
\end{subfigure}
\caption{Piecewise approximation of the constructed function: in Figure \ref{fig00001} we show the function constructed in $(0,l_0)$, whose gradient oscillates between $z_1$ and $z_2$. In Figure \ref{fig00002} we show the function constructed in $(l_0,1)$, whose gradient oscillates between $z_1$ and $z_3$.}
\end{figure}

Let us now focus on the interval $(l_0,1),$ where we want to construct the part of $u$ related to the $B_0-$term of the energy in \eqref{sup goal}. This part of the argument is very similar to the one above, but, as there might be no solution to \eqref{ODE} connecting $z_1$ to $z_3$, this time we need to construct an $u$ whose gradient is slightly more complicated. Below, we try to highlight the differences from the case above without incurring into many repetitions. Let us consider $\tilde{w}$ to be the solution to
\beq
\label{ODE2}
\eps^3\tilde w_x = \sqrt{W(\tilde w)},\qquad \tilde w(s_0+2\mu^{\theta+1})=z_2+\mu,
\eeq
where $s_0>0$ is such that $\hat w(s_0) = z_2-\mu$, $\hat{w}$ is as in \eqref{ODE}, and $\theta=\frac{3}{2}(\max\{q,3\}-2)$. Here and below $\mu = \eps^{\frac2{\max\{3,q\}-2}}$, so that $\mu^\theta=\eps^3$. We remark that an argument as the one to prove \eqref{chenneso} yields 
$$
s_0 \leq c\eps^3\mu^{1-\frac{\max\{3,q\}}{2}}\leq c\eps^2,
$$
so that $s_0$ does not explode but actually goes to zero faster than $\eps$. Again, if $q<2$ $\tilde{w}$ might not be unique, but we choose the one which stays bounded in $[z_2,z_3].$ Let us define $v$ as
\[
v(s)=
\begin{cases}
\hat w(s),\qquad&\text{if $s\leq s_0$},\\
\mu^{-\theta}(s - s_0)+(z_2-\mu),\qquad&\text{if $s_0 < s \leq s_0+2\mu^{\theta+1}$},\\
\tilde w(s),\qquad&\text{if $s_0 +2\mu^{\theta+1} < s$}.
\end{cases}
\]
Again, we divide $(l_0,1)$ into $M$ subintervals of equal length $l_M:=M^{-1}(1-l_0)$, and notice that, as $v$ is monotone, we can find $\omega_*$ such that 
$$
\int_0^{l_M} v(s-\omega_*)\,\mathrm{d}s = 0.
$$ 
As in the previous part of the proof, we construct
\[
w(x) = 
\begin{cases}
v (x-\omega_*-y_i),\qquad &\text{if $x\in(y_i,y_{i+1})$ when $i$ even and $i\neq 0$},\\
v (y_{i+1}-\omega_*-x),\qquad &\text{if $x\in(y_i,y_{i+1})$ when $i$ odd},
\end{cases}
\]
with $y_i:=l_0+ il_M$, for $i=0,\dots,M$, and $u$ as in \eqref{uuu}. We remark that, as in general 
$$
\lim_{s\uparrow l_0}w(s) = \hat w(-\omega^*)\neq\hat w(-\omega_*),
$$
$w$ needs to be defined differently in $(y_0,y_1)$ in order to be continuous and to have $u\in H^2(0,1).$ For this reason, we construct $w$ as follows in $(y_0,y_1)$:
\[
w(x) = 
\begin{cases}
\hat w (-\omega^*) + \frac{s-y_0}{\eps^3}(z_1-\hat w (-\omega^*)),\qquad &\text{if $x\in(y_0,y_{0}+\eps^3)$},\\
z_1,\qquad &\text{if $x\in(y_0+\eps^3, y_{0}+\eps^3 + a)$ },\\
z_1 + \frac{s - y_0-\eps^3-a}{\eps^3}(z_3-z_1),\qquad &\text{if $x\in(y_{0}+\eps^3 + a,y_{0}+2\eps^3 + a)$},\\
z_3,\qquad &\text{if $x\in(y_{0}+2\eps^3 + a, y_{1}-\eps^3)$ },\\
v (l_M-\omega_*) + \frac{y_1-s}{\eps^3}(z_3- v (l_M-\omega_*)),\qquad &\text{if $x\in(y_1-\eps^3, y_{1})$},\\
\end{cases}
\]
where $a$ is such that $\int_{y_0}^{y_1}w(s)\,\mathrm{d}s =0.$ We point out that such $a$ exists for each $\eps\leq\eps_0$, for some $\eps_0<1$ depending on $z_1$, $z_3$ and $l_0$ only. After defining $u$ in $(y_0,y_1)$ as in \eqref{uuu}, we have $u(y_0)=u(y_1)=0$ and
\begin{align*}
&\int_{y_0}^{y_1}u^2(s)\,\mathrm{d}s\leq \bigl(\max\{|z_1|,z_3\}\bigr)^2\int_{y_0}^{y_1}(s-y_0)^2\,\mathrm{d}s\leq c l_M^3,
\eps^{-3}.\\
\end{align*}
Thus,
$$
\int_{y_0}^{y_{1}} \bigl(\eps^4 u^2_{xx}+ \eps^{-2}W(u_x)+ \eps^{-2}u^2\bigr)\,\mathrm ds\leq c(\eps +\eps^{-2}M^{-3}).
$$
On the other hand, if $i>0$, by the definition of $E_0,E_1$ and by the way we constructed $u$ we have
\[
\begin{split}
\int_{y_i}^{y_{i+1}} \bigl(\eps^4 u^2_{xx}+ \eps^{-2}W(u_x)\bigr)\,\mathrm dx
\leq 2\eps\biggl( \int_{z_1}^{z_2}&|W(s)|^{\frac12}\,\mathrm d s+\int_{z_2}^{z_3}|W(s)|^{\frac12}\,\mathrm d s\biggr) + c\mu^{\theta+1}(\eps^4\mu^{-2\theta}+\eps^{-2})
\\
&\leq \eps( E_0+E_1)+ c\mu\eps.
\end{split}
\]
Furthermore, once defined
$$
\beta:=\mathscr{L}\bigl((y_1,y_2)\cap \{w\geq 0 \}\bigr)l_M^{-1},\qquad \gamma:=\mathscr{L}\bigl((y_1,y_2)\cap \{w < 0 \}\bigr)l_M^{-1},
$$
by arguing as in the proof of \eqref{energia u2 ref} we deduce
$$
\int_{y_i}^{y_{i+1}}u^2\leq 3^{-1} \bigl(z_3^2\beta^3+z_1^2\gamma^3 \bigr)l_M^3.
$$
Therefore, collecting the inequalities above
\beq
\label{last min}
\begin{split}
\int_{l_0}^{1} \bigl(\eps^4 u^2_{xx}&+ \eps^{-2}W(u_x)+\eps^{-2}u^2\bigr)\,\mathrm dx \\
&\leq c(\mu\eps M+\eps +\eps^{-2}M^{-3})
+(1-l_0)\Bigl((E_0+E_1)h_\eps^{-1} + 3^{-1}(z_3^2\beta^3+z_1^2\gamma^3)h_\eps^2\Bigr), 
\end{split}
\eeq
where now $h_\eps=\frac{l_M}{\eps}$. As in \eqref{sup diff} we have
\beq
\label{torefertoN}
0 = \int_0^{l_M}v(s-\omega^*)\,\mathrm{d}s\geq (z_1\gamma+(z_3-\mu)\beta)l_M-r_2,
\eeq
with $r_2 : = (z_3-\mu) \mathscr{L}\bigl(\{s:v(s-\omega^*)\in(0,z_3-\mu)\} \bigr)$.
We first notice that 
$$
r_2 = (z_3-\mu)\Bigl(\mu^{\theta+1} + s_0 +  \mathscr{L}\bigl(\{s:\tilde w(s)\in(z_2+\mu,z_3-\mu)\} \bigr)\Bigr).
$$
Thus, by arguing as in the proof of \eqref{chenneso} we first deduce
$$
\mathscr{L}\bigl(\{s:\tilde w(s)\in(z_2+\mu,z_3-\mu)\} \bigr)\leq c\eps^3\mu^{1-\frac{\max\{3,q\}}{2}},
$$
and therefore
\beq
\label{bene 000}
|r_2|\leq c(\eps^3\mu^{1-\frac{\max\{3,q\}}{2}}+s_0+\mu^{\theta+1})\leq c\eps^2.
\eeq
Define $\bar r_M:=M \eps^2 +\mu$, then \eqref{torefertoN}--\eqref{bene 000} imply
\beq
\label{towrite2}
\frac{z_3}{|z_1|}\beta\leq\gamma+c\bar{r}_M.
\eeq
In the same way, we can prove that
\beq
\label{towrite}\gamma\leq \frac{z_3}{|z_1|}\beta +c\bar{r}_M,
\eeq
and, recalling that $\beta+\gamma=1$, $(1-l_0)\frac{z_1}{z_1-z_3} = \lambda_3$, by \eqref{towrite2} we obtain
\beq
\label{hurahota}
\lambda_3-c\bar{r}_M\leq \beta(1-l_0)\leq \lambda_3+c\bar{r}_M.
\eeq
Then, after choosing $M$ to be the smallest integer larger than $(1-l_0)(\eps h^*)^{-1}$, with $$h^*:= \bigl(3(E_0+E_1)\bigr)^{\frac13}\Bigl(2\lambda_3^3(1-l_0)^{-3}z_3^2\Bigl(1-\frac{z_3}{z_1}\Bigr)\Bigr)^{-\frac13}, $$
and exploiting \eqref{towrite}--\eqref{hurahota}, \eqref{last min} becomes
\[
\begin{split}
\int_{l_0}^1\bigl(\eps^4 u^2_{xx}+ \eps^{-2}W(u_x)+\eps^{-2}u^2\bigr)\,\mathrm dx
&\leq (1-l_0)\Bigl((E_0+E_1)h_\eps^{-1} + 3^{-1}z_3^2\lambda_3^3(1-l_0)^{-3}\bigr(1-\frac{z_3}{z_1}\Bigr)h_\eps^2\Bigr)+c\hat r\\
&\leq \beta B_0 (1-l_0) + c\hat r\leq B_0\lambda_3 + c\hat r,
\end{split}
\]
where $\hat r = \eps + \mu$. Here, we repeatedly used the fact that $M \leq c\eps^{-1},M^{-1} \leq c\eps$ and that $\mu,\eps<1$ in order to estimate the above error. This together with \eqref{res 1} proves \eqref{sup goal}.\\

\noindent 
Now, since $\sigma>\eps^{\frac{1}{\max\{3,q\}}}>\mu$,
\[
\begin{split}
 \mathscr{L}\bigl( (l_0,1)\cap\{| u_x-z_2|\leq \sigma\} \bigr) \leq  
\mathscr{L}\bigl( (l_0,1)\cap\{| u_x-z_2|\leq \mu\} \bigr)+\mathscr{L}\bigl( (l_0,1)\cap\{\mu\leq | u_x-z_2|\leq \sigma\} \bigr)\\
\leq cM(\mu^{\theta+1}+\eps^3\sigma^{1-\frac{\max\{3,q\}}{2}})+c\eps^3\leq
 c\eps,  
\end{split}
\]
where we argued as to get \eqref{chenneso} in order to bound $\mathscr{L}\bigl( (l_0,1)\cap\{\mu\leq | u_x-z_2|\leq \sigma\} \bigr).$
Furthermore,
$$
\mathscr{L}\bigl((l_0,1)\cap\{|u_x-z_3|\leq \sigma\}\bigr) = \beta(1-l_0)+R_2 -\beta l_M+\mathscr{L}\bigl((y_0,y_1)\cap\{|u_x-z_3|\leq \sigma\}\bigr),
$$
where $R_2:=(M-1)\mathscr{L}\bigl((y_1,y_2)\cap\{v(s-\omega^*)\in(0,z_3- \sigma)\}\bigr)$. As $R_2\leq c M |r_2|\leq c\eps$ and $l_M\leq c\eps$ we have 
\beq
\label{forselast}
\bigl|\mathscr{L}\bigl((l_0,1)\cap\{|u_x-z_3|\leq \sigma\}\bigr) - \beta(1-l_0)\bigr|\leq c\eps.
\eeq
{Now, recalling that \eqref{hurahota} implies $|\lambda_3-\beta(1-l_0)|\leq  c\hat r$, \eqref{forselast} and the triangular inequality imply
$$ \bigl| \mathscr{L}\bigl( (l_0,1)\cap\{| u_x-z_3|\leq \sigma\} \bigr) - \lambda_3\bigr| \leq c\hat r.  $$
This together with \eqref{motescrivo} lead to the second statement of the result.}
\end{proof}

\section{Construction of a lower bound}
\label{section 4}
{This section is the core of this paper, and is where we prove a lower bound for the energy depending on the global volume fractions $\lambda_k^{\eta}(v)$ defined in \eqref{volfrac} below, and representing the one-dimensional Lebesgue measure of the set where $v_x$ is in an $\eta-$neighbourhood of $z_k$. Here, $\eta\in (\eps^{\frac{1}{q+1}},\eta_0)$, $\eta_0$ is as in (H4) and $v\in V$. We point out that the presence of a third well gives the possibility of many different microstructures (see e.g., Figure \ref{fig00001}, Figure \ref{fig00002} and Figure \ref{fig:fig}), and makes the estimates below long and technical. 

The strategy to prove our lower bounds is the following: for every $v\in V$ of finite energy we identify $L-$intervals (see Definition \ref{defDC}), sets in which $v_x>z_1+\eta$ and containing a subset of positive measure where $v_x>z_2+\eta$. By Lemma \ref{numero} below, the number $N_v$ of $L-$intervals is finite, and can be bounded by a constant times $\frac1\eps.$ 
In Proposition \ref{sotto} we estimate from below the $L^2-$norm of $v$ in the $L-$intervals $L_i\subset (0,1)$, with $i=1,\dots,N_v$. We highlight that the sharp estimates are different for different types of microstructures (see Definition \ref{GliDIntervals} and Figure \ref{ClassPict}). 
We then identify (possibly empty) regions $\Sigma_i\subset(0,1)$ in the set where $v_x$ is in an $\eta-$neighbourhood of $z_1$, and in these sets we estimate the $L^2-$norm of $v$. The lower bounds for the $L^2-$norm in the sets $\Sigma_i$ are combined with the $L^2-$estimates in the $L_i$'s to obtain good lower bounds for the $L^2-$norm of $v$ on every disjoint set $F_i:=L_i\cup\Sigma_i$.
The interface energy, that is, the energy necessary for the transition of $v_x$ from one well of $W$ to another, can be bounded via the Modica-Mortola estimate
$$
\int_{a}^{b} \bigl( \eps^4 v_{xx}^2+\eps^{-2}W(v_x)\bigr)\,\mathrm dx\geq 2 \eps\int_{a}^{b} |\sqrt{W(v_x)}v_{xx}|\,\mathrm dx \geq  2 \eps\Bigl| \int_{v_x(a)}^{v_x(b)} \sqrt{W(s)}\,\mathrm ds\Bigr|,
$$
valid for every $0\leq a\leq b\leq 1.$ We show that for each $i=1,\dots,N_v$
\beq
\label{minimideps}
\begin{split}
\text{energy of v in $F_i$} + \text{small error} \geq \eps\text{(interface energy in $F_i$)} + \frac{1}{\eps^2}\text{($L^2-$ norm of $v$ in $F_i$)} \\
\geq
\min_{d>0}\Bigl\{d\text{(interface energy in $F_i$)} + \frac{1}{d^2}\text{($L^2-$ norm of $v$ in $F_i$)}\Bigr \}.
\end{split}
\eeq
{In the two-well case (see \cite{Muller1}), it is possible to sum the resulting lower bounds over $i=1,\dots,N_v$, and to obtain a lower bound depending on global quantities only.} In our case, however, the lower bounds deduced via \eqref{minimideps} are nonlinear in the volume fractions $\ai,\bi$ (see \eqref{defaibi} below), defined respectively as the Lebesgue measures of the regions of $F_i$ where $v_x$ is close to $z_2,z_3$. Furthermore, we get lower bounds which are different depending on the different microstructures in the interval (see e.g., Figure \ref{fig00001}, Figure \ref{fig00002} and Figure \ref{fig:fig}). This means that different microstructures give a different dependence of the lower bound on the volume fractions $\ai,\bi$. These facts increase the complexity of the problem, as they do not allow one, in general, to collect the estimates for the different $F_i$ and to obtain a lower bound depending only on the global volume fractions $\lambda_k^\eta(v)$, $k=1,2,3$. 

Finally, in Theorem \ref{bound da sotto thm} we use assumptions (H6)--(H8) to bound from below the estimates obtained in Proposition \ref{sotto} with the linear function $A_0\ai+B_0 \bi$. 
We can hence sum the contribution of every disjoint set $F_i$ and obtain the final lower bound $A_0\lambda_2^{\eta}(v)+B_0\lambda_3^{\eta}(v)$. The final estimate looks independent of $\lambda_1^{\eta}(v)$, but this is because we implicitly make use of
$$
\sum_{k=1}^3\lambda_k^{\eta}(v) = 1 + \text{small error}.
$$
}

Let $\eta_0>0$ be as in (H4). Given a generic $v\in H^2(0,1)$, $\eta\in(0,\eta_0)$ let us define the $k$-th global volume fraction for $v$ as
\beq
\label{volfrac}
\lambda_k^{\eta}(v):=  \mathscr{L}\bigl(\bigl\{x\in (0,1)\colon |v_{x}(x)-z_k|\leq\eta\bigr\}\bigr),\qquad k=1,2,3,
\eeq
and let us also generalize the definition of transition layers given in \cite{Muller1} (cf. also Figure \ref{AB tl})
\begin{dfntn}
Let $v\in H^2(a,b)$ and $\eta\in(0,\eta_0)$. An interval $(x^-,x^+)$ is called an $A_+^{\eta}-$transition (resp. an $A_-^{\eta}-$transition) layer for $v$ if
\begin{align*}
&v_x(x)  \in (z_1+\eta, z_2-\eta), \qquad\forall x\in(x^-,x^+),\\
&v_x(x^-) = z_1+\eta, \qquad(\text{resp. $v_x(x^+) = z_1+\eta$}),\\
&v_x(x^+) = z_2-\eta, \qquad(\text{resp. $v_x(x^-) = z_2-\eta$}).
\end{align*}
An interval $(x^-,x^+)$ is called a $B_+^{\eta}-$transition (resp. a $B_-^{\eta}-$transition) layer for $v$ if
\begin{align*}
&v_x(x)  \in (z_2+\eta, z_3-\eta), \qquad\forall x\in(x^-,x^+),\\	
&v_x(x^-) = z_2+\eta, \qquad(\text{resp. $v_x(x^+) = z_2+\eta$}),\\
&v_x(x^+) = z_3-\eta, \qquad(\text{resp. $v_x(x^-) = z_3-\eta$}).
\end{align*}
\end{dfntn}
Given a function $v\in H^2(0,1)$ and $\eta\in(0,\eta_0)$ we denote by $\# A^\eta_+$ (or by $\# A^\eta_-,\# B^\eta_+,\# B^\eta_-$) the number of $A_+^{\eta}-$transition layers for $v$ (resp. $\# A^\eta_-,\# B^\eta_+,\# B^\eta_--$transition layers for $v$) in the interval $(0,1)$. {The number of transition layers of a function $v$ with bounded energy can be controlled by a constant times $\eps^{-1}$, as stated in the following lemma:}
\begin{lmm}
\label{numero}
Assume {\rm(H1)--(H5)}, $\eta < \eta_0$, and let $\eps>0$ and $v\in H^2(0,1)$ be such that $I^\eps(v)\leq C$. Then, there exists $c=c(C)>0$ such that
$$
\max \bigl\{ \# A_+^{\eta},\# A_-^{\eta},\# B_+^{\eta},\# B_-^{\eta}\bigr\}\leq c\eps^{-1}.
$$
\end{lmm}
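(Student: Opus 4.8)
The plan is to extract two independent consequences from the energy bound $I^\eps(v)\le C$, namely a bound on the number of oscillations and a bound on the cost of each individual transition, and then to balance them. First I would use the $u^2$ part of the energy: since $\int_0^1 v^2\,\mathrm dx \le C\eps^2$ and $v\in W^{1,p}_0(0,1)$, the function $v$ cannot stay far from zero on a long interval; more precisely, on any interval where $v_x$ is close to one of the nonzero wells $z_k$ (so that $|v_x|$ is bounded below by a fixed positive constant, using $\eta<\eta_0<\min\{-z_1,z_2,(z_3-z_2)/2\}$), the function $v$ must vanish somewhere nearby, because otherwise a linear lower bound on $|v|$ of slope $\sim |z_k|$ over an interval of length $\ell$ would force $\int v^2 \gtrsim \ell^3$, contradicting the $O(\eps^2)$ bound unless $\ell \lesssim \eps$. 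Hence between consecutive zeros of $v$ the relevant portions of each transition layer live in intervals of length at most $c\eps$; said differently, the intervals on which $v_x\in(z_1+\eta,z_2-\eta)$ or $v_x\in(z_2+\eta,z_3-\eta)$ cluster into groups separated by zeros of $v$, and each group occupies length $O(\eps)$.

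The second ingredient is the interfacial (Modica--Mortola) bound already recalled in the excerpt: each $A_\pm^\eta$- or $B_\pm^\eta$-transition layer costs at least $2\eps\int_{z_1+\eta}^{z_2-\eta}\sqrt{W(s)}\,\mathrm ds$ (resp. the analogous integral between $z_2+\eta$ and $z_3-\eta$) in the $\eps^4 v_{xx}^2+\eps^{-2}W(v_x)$ part of $I^\eps$. By (H3) and (H5), for $\eta<\eta_0$ fixed these integrals are bounded below by a positive constant $c_*=c_*(\eta)$; in particular $c_* \ge c\,\min\{E_0,E_1\}>0$ up to the $\eta$-truncation near the wells, which is harmless. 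Therefore if there were $N$ transition layers of a given type, the total energy would be at least $N c_* \eps$, whence $N\le C/(c_*\eps)=c\eps^{-1}$. This already gives the claimed bound on $\#A_\pm^\eta$ and $\#B_\pm^\eta$ directly, without even needing the $L^2$-argument.

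So the cleanest route is in fact just the interfacial estimate: I would take any one of the four transition-layer types, say $A_+^\eta$, observe its transition layers are pairwise disjoint subintervals of $(0,1)$ (by definition $v_x$ exits $(z_1+\eta,z_2-\eta)$ at each endpoint, so two such layers cannot overlap), apply the Modica--Mortola inequality on each one to get a per-layer lower bound $2\eps\bigl|\int_{z_1+\eta}^{z_2-\eta}\sqrt{W}\bigr|=:2\eps\,\mathcal I_A^\eta>0$, sum over the disjoint layers to get $\#A_+^\eta\cdot 2\eps\,\mathcal I_A^\eta \le \int_0^1(\eps^4 v_{xx}^2+\eps^{-2}W(v_x))\,\mathrm dx \le I^\eps(v)\le C$, and conclude $\#A_+^\eta \le C/(2\mathcal I_A^\eta)\,\eps^{-1}$. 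The same argument applies verbatim to $A_-^\eta$ (same integral $\mathcal I_A^\eta$), and to $B_\pm^\eta$ with the integral $\mathcal I_B^\eta:=\bigl|\int_{z_2+\eta}^{z_3-\eta}\sqrt{W}\bigr|$; taking $c:=\max\{C/(2\mathcal I_A^\eta),\,C/(2\mathcal I_B^\eta)\}$ gives the stated uniform bound. The only thing to check is that $\mathcal I_A^\eta$ and $\mathcal I_B^\eta$ are strictly positive, which follows from (H3) (W is strictly positive off $\mathcal Z$) together with $\eta<\eta_0$ so that the integration intervals $(z_1+\eta,z_2-\eta)$ and $(z_2+\eta,z_3-\eta)$ are nonempty and bounded away from $\mathcal Z$; the constant $c$ then depends on $C$, on $W$, and on $\eta$ (equivalently on $\eta_0$), all of which are parameters of the problem.

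The main obstacle, if any, is purely bookkeeping: one must be careful that the transition layers of a fixed type are genuinely disjoint (this is immediate from the definition, since $v_x$ takes the boundary value $z_1+\eta$ or $z_2-\eta$ at the endpoints and lies strictly between them inside), and that the Modica--Mortola bound may be applied on each layer and the results added without double-counting the $\eps^4 v_{xx}^2+\eps^{-2}W(v_x)$ integrand — which is fine precisely because the layers are disjoint and the integrand is nonnegative. No delicate estimate is needed here; the lemma is the soft input that makes the genuinely hard estimates of Proposition~\ref{sotto} and Theorem~\ref{bound da sotto thm} possible by guaranteeing there are only $O(\eps^{-1})$ building blocks to control.
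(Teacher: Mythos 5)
Your argument is correct and is essentially the paper's proof: apply the Modica--Mortola per-layer lower bound, note the transition layers of each fixed type are pairwise disjoint, and sum; the initial $L^2$ detour you sketch is indeed dispensable, as you yourself observe. The one small polish the paper makes that you should adopt is to replace the $\eta$-dependent cost $\int_{z_1+\eta}^{z_2-\eta}\sqrt W$ by its smaller $\eta_0$-truncated version $\int_{z_1+\eta_0}^{z_2-\eta_0}\sqrt W$, which is legitimate since $\eta<\eta_0$, so that the resulting constant $c$ visibly depends only on $C$ and fixed problem data (via $\eta_0$), and not on the free parameter $\eta$.
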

\begin{proof}
Let us first recall that, given $0\leq a\leq b\leq 1$ we have
\beq
\label{MMest}
\int_{a}^{b} \bigl( \eps^4 v_{xx}^2+\eps^{-2}W(v_x)\bigr)\,\mathrm dx\geq 2 \eps\int_{a}^{b} |\sqrt{W(v_x)}v_{xx}|\,\mathrm dx\geq  2\eps \big|H(v_x(b))-H(v_x(a))\big| ,
\eeq
where $H(s) = \int_0^s\sqrt{W(r)}\,\mathrm{d}r$. Now, let us restrict ourselves to the case of the $A_\pm^{\eta}-$transition layers, as the proof for the $B_\pm^{\eta}-$transition layers follows the same strategy. Let $(x^-,x^+)$ be an $A_\pm^{\eta}-$transition layer, then by \eqref{MMest} and the fact that $\eta<\eta_0$ we have 
\beq
\label{boundABt}
\int_{x^-}^{x^+} \bigl( \eps^4 v_{xx}^2+\eps^{-2}W(v_x)\bigr)\,\mathrm dx\geq   2\eps (H(z_2-\eta_0)-H(z_1+\eta_0)) > \eps \tilde c,
\eeq
for some positive constant $\tilde c$. 
Summing all the $A_\pm^{\eta}-$transition layers we thus get
$$
C\geq I^\eps(v)\geq \eps\tilde{c}(\# A_+^{\eta}+\# A_-^{\eta}),
$$
which concludes the proof.
\end{proof}

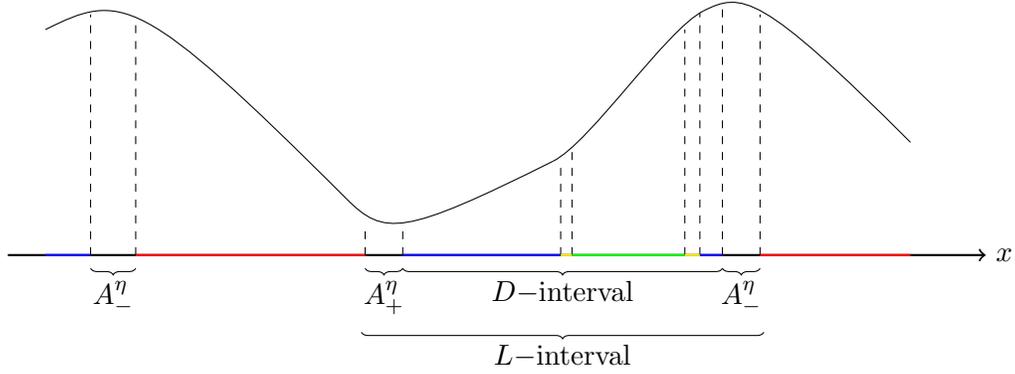
\begin{figure}
\centering
\begin{tikzpicture}[scale=0.50]
\draw (7,4) .. controls (9,5) and (9.5,5) .. (15,-0.5);\draw (15,-0.5) .. controls (16,-1.5) and (16.5,-1.5) .. (20.5,0.5);
\draw (20.5,0.5) .. controls (21.5,1) and (23.5,4) .. (24.5,4.5);
\draw (24.5,4.5) .. controls (25.5,5) and (26,5) .. (30,1);
\draw [dashed,thin] (8.2,-2) -- (8.2,4.4);
\draw [dashed,thin] (9.4,-2) -- (9.4,4.2);
\draw [dashed,thin] (15.5,-2) -- (15.5,-1.25);
\draw [dashed,thin] (16.5,-2) -- (16.5,-1.35);
\draw [dashed,thin] (20.7,-2) -- (20.7,0.5);
\draw [dashed,thin] (21,-2) -- (21,0.8);
\draw [dashed,thin] (24.0,-2) -- (24.0,4);
\draw [dashed,thin] (24.4,-2) -- (24.4,4.45);
\draw [dashed,thin] (25,-2) -- (25,4.73);
\draw [dashed,thin] (26,-2) -- (26,4.4);
\draw [->,thick] (6,-2) -- (32,-2);
\filldraw [red] (32,-2) circle (0pt) 
node[anchor=west,black] {$x$};

\draw[decoration={brace,mirror,raise=5pt},decorate]
  (8.2,-2) -- node[below=6pt] {$A_-^\eta$} (9.4,-2);
\draw[decoration={brace,mirror,raise=5pt},decorate]
  (15.5,-2) -- node[below=6pt] {$A_+^\eta$} (16.5,-2);
\draw[decoration={brace,mirror,raise=5pt},decorate]
  (25,-2) -- node[below=6pt] {$A_-^\eta$} (26,-2); 

\draw[decoration={brace,mirror,raise=5pt},decorate]
  (16.5,-2) -- node[below=6pt] {$D-$interval} (25,-2);
\draw[decoration={brace,mirror,raise=5pt},decorate]
  (15.4,-3.7) -- node[below=6pt] {$L-$interval} (26.1,-3.7);
\draw [red,thick] (9.4,-2) -- (15.5,-2);
\draw [red,thick] (26,-2) -- (30,-2);
\draw [blue,thick] (16.5,-2) -- (20.7,-2);
\draw [blue,thick] (24.4,-2) -- (25,-2);
\draw [blue,thick] (7,-2) -- (8.2,-2);
\draw [green,thick] (21,-2) -- (24,-2);
\draw [yellow,thick] (20.7,-2) -- (21,-2);
\draw [yellow,thick] (24,-2) -- (24.4,-2);
 \end{tikzpicture}
 \caption{\label{AB tl}Example of $L-$interval and $D-$interval defined in Definition \ref{defDC}. In this picture, the red, the blue and the green intervals are respectively the sets of points where $|v_x-z_1|\leq \eta,$ $|v_x-z_2|\leq \eta,$ and $|v_x-z_3|\leq \eta$. The $B^\eta_\pm-$transition layers are coloured in yellow.}
\end{figure}

We can now introduce also the $D-$intervals, which are the intervals between an $A_+^\eta-$transition layer $(y^-,x^-)$ and the first $A_-^\eta-$transition layer $(y^+,x^+)$ in order of appearance in $(0,1)$ after $(y^-,x^-)$ (see Figure \ref{AB tl}):
\begin{dfntn}
\label{defDC}
{
Let $v\in H^2(a,b)$ and $\eta\in(0,\eta_0)$.
Let $(y^-,x^-)$ be an $A_+^\eta-$transition layer for $v$ and $(y^+,x^+)$ be an $A_-^\eta-$transition layer for $v$, with $x^-\leq x^+.$ We say that $(y^-,y^+)$ is a $L-$interval for $v$, if 
\beq
\label{ladefdiD}
v_x(x)>z_1+\eta,\quad \text{ for each $x\in(y^-,y^+)$,}\qquad \text{ and }\qquad v_x(y^+)=v_x(y^-)=z_1+\eta.
\eeq 
If \eqref{ladefdiD} holds, the interval $(x^-,x^+)$ is called a $D-$interval for $v$.
 } 
\end{dfntn}
{It is important to notice that $v_x$ might take negative values in a $D-$interval. For every $v\in V$, the number 
$$N_v = \text{number of $D-$intervals for $v$ in $(0,1)$},$$
is finite. Indeed, for every $v\in V$, $v_x$ is continuous and $N_v$ is equal to the number of $A_+^{\eta}-$transition layers, which is finite by Lemma \ref{numero}. We denote by $D_i$ the $i$-th $D-$interval in order of appearance in the interval $(0,1)$, where $i$ goes from $1$ to $N_v$. This means that, given two $D-$intervals $D_i=(x_i^-,x_i^+)$ and $D_j=(x_j^-,x_j^+)$, we have $x_i^+<x_j^-$ if and only if $i<j$.} The same can be done for $L-$intervals. Given $v\in H^2(0,1)$ we define also the following quantities
\beq
\label{defaibi}
\alpha_i^{\eta}(v) :=  \mathscr{L}\bigl( \bigl\{x\in D_i\colon |v_x(x)-z_2|\leq\eta \bigr\}\bigr),\qquad
\beta_i^{\eta}(v) := \mathscr{L}\bigl( \bigl\{x\in D_i\colon |v_x(x)-z_3|\leq\eta \bigr\}\bigr).
\eeq
measuring the subset of $D_i$ where $v_x$ is respectively close to $z_2$ and $z_3$. For ease of notation, we omit the dependence on $v$ of $\ai^\eta,\bi^\eta$ and $\lambda_k^\eta$. In what follows we will also drop the $\eta$ from $\alpha_i^{\eta},\beta_i^{\eta}$, keeping their dependence from this variable implicit. We remark that, denoting $D_1 = (x_1^-,x_1^+)$, $D_{N_v} = (x_{N_v}^-,x_{N_v}^+)$, we have
\beq
\label{sommediaibi}
\begin{split}
\sum_{i=1}^{N_v} \alpha_i + \mathscr{L}\bigl( \bigl\{x\in (0,x_1^-)\cup(x_{N_v}^+,1)\colon |v_x(x)-z_2|\leq\eta \bigr\}\bigr)= \lambda^\eta_2,
\\ \sum_{i=1}^{N_v} \beta_i +\mathscr{L}\bigl( \bigl\{x\in (0,x_1^-)\cup(x_{N_v}^+,1)\colon |v_x(x)-z_3|\leq\eta \bigr\}\bigr) = \lambda^\eta_3,
\end{split}
\eeq
and that, in general, $\ai+\bi <\mathscr{L}(D_i).$ Below, we estimate the energy of a generic $v\in V$ on every $D-$interval in terms of the quantities $\ai,\bi$. In order to do that, we first need to prove the following lemma, which is graphically explained in Figure \ref{StimaBasso}
\begin{figure}
\centering
\begin{tikzpicture}[scale=0.7]
\draw [thin,blue] (0,0) -- (3,0);
\draw [thin,blue] (3,0) -- (7,2);
\draw [thin,blue] (7,2) -- (10,5);

\draw [thin,red] (0,0) -- (1,0.5);
\draw [thin,red] (1,0.5) -- (2,1.5);
\draw [thin,red] (2,1.5) -- (4,1.5);
\draw [thin,red] (4,1.5) -- (6,2.5);
\draw [thin,red] (6,2.5) -- (7,3.5);
\draw [thin,red] (7,3.5) -- (8,4);
\draw [thin,red] (8,4) -- (9,4);
\draw [thin,red] (9,4) -- (10,5);
\draw [->,thick] (-1,0) -- (11,0);
\filldraw [red] (11,0) circle (0pt) 
node[anchor=west,black] {$x$};
\filldraw [red] (10,0) circle (0pt) 
node[anchor=north,black] {$b$};
\filldraw [red] (0,0) circle (0pt) 
node[anchor=north,black] {$a$};
\draw [thin,dashed] (10,0) -- (10,5);
\draw [thin,dashed] (0,0) -- (0,0.1);
\draw [thin,dashed] (7,0) -- (7,2);
 \end{tikzpicture}
\caption{\label{StimaBasso} Representation of Lemma \ref{dasottointegrale}. In blue the optimal-function $u\in W^{1,1}(a,b)$, which is convex, in red a generic function $v\in W^{1,1}(a,b)$ such that $u(a)=v(a)$ and \eqref{speclevset} holds.}
\end{figure}
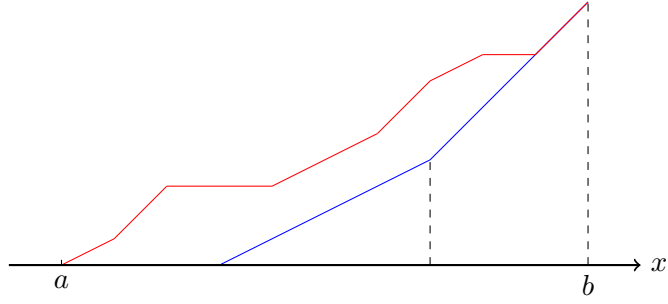
\begin{lmm}
\label{dasottointegrale}
Let $0\leq a\leq b \leq 1$ and let $u,v\in W^{1,1}(a,b)$ be two non-decreasing functions such that $u(a)=v(a)$ and
\beq
\label{speclevset}
\mathscr{L}\bigl(\{x\in(a,b)\colon u_x\geq \rho \}\bigr) = \mathscr{L}\bigl(\{x\in(a,b)\colon v_x\geq \rho \}\bigr),
\eeq
for every $\rho\geq 0$. If $u$ is the optimal function, that is if $u_x$ is non-decreasing in $(a,b)$, then $u(x)\leq v(x)$ for every $x\in[a,b]$.
\end{lmm}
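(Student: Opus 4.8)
This is a one-dimensional rearrangement estimate, so the plan is to reduce the pointwise inequality to a comparison of partial integrals of the derivatives, and then exploit that $u_x$ is the monotone rearrangement of $v_x$. Since $u,v\in W^{1,1}(a,b)$ are absolutely continuous on $[a,b]$ and $u(a)=v(a)$, for every $x\in[a,b]$ one has
$$
v(x)-u(x)=\int_a^x\bigl(v_x(t)-u_x(t)\bigr)\,\mathrm dt ,
$$
so it suffices to prove $\int_a^x v_x\,\mathrm dt\ge\int_a^x u_x\,\mathrm dt$ for every $x\in[a,b]$. Because $u$ and $v$ are non-decreasing we have $u_x,v_x\ge 0$ a.e.\ in $(a,b)$, and hypothesis \eqref{speclevset} says exactly that $u_x$ and $v_x$ are equimeasurable; as $u_x$ is itself non-decreasing, it is the non-decreasing rearrangement of $v_x$ on $(a,b)$.

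For the key estimate I would use the Cavalieri (layer-cake) identity: for a non-negative $g\in L^1(a,b)$ and any measurable $S\subset(a,b)$,
$$
\int_S g(t)\,\mathrm dt=\int_0^\infty\mathscr{L}\bigl(\{t\in S\colon g(t)>\rho\}\bigr)\,\mathrm d\rho .
$$
Set $\mu(\rho):=\mathscr{L}(\{t\in(a,b)\colon u_x(t)>\rho\})$; since \eqref{speclevset} determines the distribution function, the closed and open superlevel conventions coincide for all but at most countably many $\rho$, so $\mu(\rho)=\mathscr{L}(\{t\in(a,b)\colon v_x(t)>\rho\})$ for a.e.\ $\rho>0$. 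By monotonicity of $u_x$, the set $\{t\in(a,b)\colon u_x(t)>\rho\}$ is, up to a null set, the interval $(b-\mu(\rho),b)$, whence
$$
\mathscr{L}\bigl(\{t\in(a,x)\colon u_x(t)>\rho\}\bigr)\le\bigl(\mu(\rho)-(b-x)\bigr)_+ .
$$
On the other hand, deleting the interval $(x,b)$, which has length $b-x$, from any subset of $(a,b)$ lowers its measure by at most $b-x$, so
$$
\mathscr{L}\bigl(\{t\in(a,x)\colon v_x(t)>\rho\}\bigr)\ge\Bigl(\mathscr{L}\bigl(\{t\in(a,b)\colon v_x(t)>\rho\}\bigr)-(b-x)\Bigr)_+=\bigl(\mu(\rho)-(b-x)\bigr)_+ .
$$
Thus $\mathscr{L}(\{t\in(a,x)\colon v_x(t)>\rho\})\ge\mathscr{L}(\{t\in(a,x)\colon u_x(t)>\rho\})$ for a.e.\ $\rho>0$; integrating in $\rho$ and applying the Cavalieri identity on $S=(a,x)$ to both $u_x$ and $v_x$ gives $\int_a^x v_x\,\mathrm dt\ge\int_a^x u_x\,\mathrm dt$, and hence $v(x)\ge u(x)$, as claimed.

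I do not expect a genuine obstacle here: the statement is, in substance, the Hardy--Littlewood inequality $\int_S g\ge\int_0^{\mathscr{L}(S)}g_*\,\mathrm ds$ applied with $S=(a,x)$ and $g=v_x$, combined with the observation that $\int_a^x u_x$ already realises that lower bound because $u_x$ is monotone non-decreasing. The only mildly delicate points are the passage from the closed superlevel sets appearing in \eqref{speclevset} to the distribution function used under the integral sign, and carrying the boundary term $b-x$ correctly; both are routine.
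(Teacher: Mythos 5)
Your proof is correct and follows essentially the same route as the paper's. Both arguments reduce to comparing $\int_a^x u_x$ and $\int_a^x v_x$ via the layer-cake identity, and both rest on the same key observation: since $u_x$ is non-decreasing, $\{u_x\geq\rho\}$ is (up to a null set) an interval with right endpoint $b$, so $\mathscr{L}\bigl(\{u_x\geq\rho\}\cap(a,x)\bigr)=\bigl(\mu(\rho)-(b-x)\bigr)_+$, which is a trivial lower bound for $\mathscr{L}\bigl(\{v_x\geq\rho\}\cap(a,x)\bigr)$. The paper obtains both sides of that comparison from the inclusion-exclusion identity $\mathscr{L}(A\cap B)=\mathscr{L}(A)+\mathscr{L}(B)-\mathscr{L}(A\cup B)$, while you pass to open superlevel sets (observing that this changes nothing for a.e.\ $\rho$) and argue the lower bound by "removing $(x,b)$ costs at most $b-x$"; these are cosmetic variants of the same estimate.
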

\begin{proof}
We first notice that, as $u_x$ is non decreasing, $\{u_x\geq\rho\}$ is either empty, or an interval containing $b$. Thus, for every $ x\in(a,b)$, we have
\[
\mathscr{L}\bigl(\{u_x\geq \rho \}\cap(a,x)\bigr) = \bigl(\mathscr{L}\bigl(\{u_x\geq \rho \}\bigr)+x-b	\bigr)_+ = \bigl(\mathscr{L}\bigl(\{v_x\geq \rho \}\bigr)+x-b	\bigr)_+
\leq \mathscr{L}\bigl(\{v_x\geq \rho \}\cap(a,x)\bigr),
\]
where we denoted by $(\cdot)_+ := \max\{0,\cdot\}$, and where we used $\mathscr{L}(A\cap B) = \mathscr{L}(A)+\mathscr{L}(B)-\mathscr{L}(A\cup B)$ in the first and last passage. Therefore,
$$
u(x)- u(a) =  \int_a^x u_x(s)\,\mathrm{d}s = \int_0^\infty \mathscr{L}\bigl(\{u_x\geq \rho \}\cap(a,x)\bigr)\,\mathrm{d}\rho \leq \int_0^\infty \mathscr{L}\bigl(\{v_x\geq \rho \}\cap(a,x)\bigr)\,\mathrm{d}\rho
= v(x) -v(a) ,
$$
for every $x\in[a,b]$. As $u(a)=v(a),$ the claimed is proved.
\end{proof}
\begin{rmrk}
\label{resdasottoint rem}
It follows from Lemma \ref{dasottointegrale} that, given $0\leq a<b\leq 1$, and two Borel sets $\mathcal{C}_1,\mathcal{C}_2\subset (a,b)$ such that $\mathcal{C}_1\cap \mathcal{C}_2=\varnothing,$ then
\[
\int_a^b\Bigl(\tau_0+\sum_{i=1,2}\tau_i\mathscr{L}\bigl(	(a,x)\cap \mathcal{C}_i	\bigr)	\Bigr)^2\,\mathrm{d}x\geq  \int_0^{\mathscr{L}(\mathcal{C}_1)}(\tau_0+\tau_1x)^2\,\mathrm{d}x+\int_0^{\mathscr{L}(\mathcal{C}_2)}(\tau_0+\tau_1\mathscr{L}(\mathcal{C}_1)+\tau_2x)^2\,\mathrm{d}x
\]
for any $\tau_0\geq 0,$ $\tau_2>\tau_1\geq 0$.
\end{rmrk}
{We can now start to estimate the energy in the $L-$intervals. We start by obtaining the desired lower bound for all the $L-$intervals in which $\ai$ and $\bi$ are either too small or too large.}
\begin{lmm}
\label{corto}
Assume {\rm(H1)--(H5)}, and let 
$v\in H^2(0,1),$ $\eta\in(0,\eta_0)$ and $\eps\leq\eta^q$ be such that $I^{\eps}(v)\leq C$, with $C>0$. Then, there exist $R_*,R^*(C)>0$ such that for any $L_i=(y_i^-,y^+_i)$, with $\max\{\ai,\bi\}\geq R^*\eps$ or $\max\{\ai,\bi\}\leq R_*\eps$, 
\beq
\label{maxcontrol}
\int_{y_i^-}^{y^+_i}\bigl(\eps^4v_{xx}^2+\eps^{-2}W(v_x)+\eps^{-2}v^2\bigr)\,\mathrm dx\geq A_0\ai+B_0\bi.
\eeq
\end{lmm}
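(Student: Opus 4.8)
The argument splits according to the two extreme sizes of $\max\{\ai,\bi\}$, and in both regimes we combine the Modica--Mortola inequality \eqref{MMest} for the interface part of the energy with the rearrangement Lemma \ref{dasottointegrale} (via Remark \ref{resdasottoint rem}) for the lower bound on $\eps^{-2}\int_{L_i}v^2$.

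\emph{The regime $\max\{\ai,\bi\}\le R_*\eps$.} Here one only needs the elementary fact that \emph{every} $L$-interval carries at least a fixed multiple of $\eps$ of interface energy. Indeed, if $L_i=(y_i^-,y_i^+)$ and $(y_i^-,x_i^-)$ is the opening $A_+^\eta$-transition layer provided by Definition \ref{defDC}, then $v_x(y_i^-)=z_1+\eta$ and $v_x(x_i^-)=z_2-\eta$, so \eqref{MMest} applied on $(y_i^-,x_i^-)$ gives $\int_{y_i^-}^{y_i^+}(\eps^4v_{xx}^2+\eps^{-2}W(v_x))\,\mathrm dx\ge 2\eps\bigl(H(z_2-\eta)-H(z_1+\eta)\bigr)\ge 2\eps\bigl(H(z_2-\eta_0)-H(z_1+\eta_0)\bigr)=:\eps\bar c$, with $\bar c>0$ because $H$ is strictly increasing on $[z_1+\eta_0,z_2-\eta_0]$, an interval avoiding the wells. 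Since $A_0\ai+B_0\bi\le(A_0+B_0)\max\{\ai,\bi\}\le(A_0+B_0)R_*\eps$, the choice $R_*:=\bar c/(A_0+B_0)$ makes the right-hand side of \eqref{maxcontrol} at most $\eps\bar c$, which proves the claim in this regime.

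\emph{The regime $\max\{\ai,\bi\}\ge R^*\eps$.} By the obvious symmetry $z_2\leftrightarrow z_3$, $A_0\leftrightarrow B_0$, $E_0\leftrightarrow E_0+E_1$, $z_{21}\leftrightarrow z_{31}$ we may assume $\ai=\max\{\ai,\bi\}\ge R^*\eps$, so that $A_0\ai+B_0\bi\le(A_0+B_0)\ai$; moreover if $\bi$ is not small the transitions to $z_3$ only add interface and $L^2$ energy, so the tight situation is the one in which $v_x$ oscillates essentially between $z_1$ and $z_2$ on $L_i$. One first records, as in Lemma \ref{numero}, that $\int_{L_i}(\eps^4v_{xx}^2+\eps^{-2}W(v_x))\,\mathrm dx\ge\eps\tilde c\,n_i$, where $n_i$ is the number of ``genuine'' oscillations of $v$ on $L_i$ (say, down-swings of $v$ that bring $v_x$ below $z_1+\eta_0$; shallower wiggles cannot help, since they leave $v$ essentially monotone and therefore only increase $\int_{L_i}v^2$). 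Cutting $L_i$ at the bottoms of these swings exhibits $v$ as a concatenation of at most $n_i+1$ monotone teeth, and Lemma \ref{dasottointegrale} applied tooth by tooth, with $v_x\ge z_2-\eta$ on the $z_2$-plateaux (total mass $\ai$) and $v_x\ge z_3-\eta$ on the $z_3$-plateaux, yields a lower bound for $\eps^{-2}\int_{L_i}v^2$ in terms of $\ai,\bi$ and $n_i$. Since $\ai\ge R^*\eps$, the bulk of $L_i$ is long compared with the optimal oscillation scale of Proposition \ref{upper bnd}, the endpoint contributions (the opening/closing $A$-transitions and the offset of $v$) are negligible, and the two bounds reduce, after optimising over $n_i$, to the one-parameter minimisations defining $A_0$ and $B_0$; this gives $\int_{y_i^-}^{y_i^+}(\eps^4v_{xx}^2+\eps^{-2}W(v_x)+\eps^{-2}v^2)\,\mathrm dx\ge A_0\ai+B_0\bi$ up to an error $o(\ai)$ that is absorbed by taking $R^*=R^*(C)$ large.

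\emph{Main obstacle.} The delicate point is precisely the last step of the second regime: a purely crude trade-off of the form $\eps\tilde c\,n_i+\eps^{-2}c\,z_2^2\,\ai^3(n_i+1)^{-2}$, minimised in $n_i$, only produces $c'\ai$ with $c'$ built from the rough Modica--Mortola and Poincar\'e constants, which need not reach $A_0+B_0$. What rescues the estimate is that, when $\ai\ge R^*\eps$ with $R^*$ as large as we wish (depending only on $C$ through the bound $n_i\le C/(\eps\tilde c)$), a low-energy $v$ on such a long $L$-interval is forced to perform near-optimal full well-to-well oscillations, so that Lemma \ref{dasottointegrale} gives the \emph{sharp} $L^2$-cost and \eqref{MMest} the \emph{sharp} interface cost $E_0$ (resp.\ $E_0+E_1$) per oscillation; the boundary effects that obstruct this in the moderate regime $R_*\eps<\max\{\ai,\bi\}<R^*\eps$ are here of lower order. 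This is exactly why the structural hypotheses (H6)--(H8) are not needed in Lemma \ref{corto}: one is far from the critical microstructure scale, where the competition between the $z_2$- and $z_3$-microstructures within a single $D$-interval is decided.
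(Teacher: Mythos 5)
For the regime $\max\{\ai,\bi\}\le R_*\eps$ your argument is essentially the paper's: a single $A_+^\eta$-transition layer already costs a fixed multiple of $\eps$ via Modica--Mortola, while $A_0\ai+B_0\bi\le(A_0+B_0)R_*\eps$, so $R_*$ can be chosen accordingly. This part is fine.

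For the regime $\max\{\ai,\bi\}\ge R^*\eps$, however, you have taken a fundamentally different and much harder route than the paper, and your ``Main obstacle'' paragraph is in effect an admission that it does not close. You try to show that a long $L$-interval forces near-optimal well-to-well oscillation so that the \emph{sharp} constants $A_0,B_0$ emerge. This cannot work as written: by Definition \ref{defDC}, an $L$-interval is a single interval on which $v_x>z_1+\eta$ throughout, bracketed by exactly one $A_+^\eta$- and one $A_-^\eta$-transition layer, so there is no internal oscillation structure of the kind you describe (``$n_i$ down-swings of $v$'') to exploit, and the crude trade-off you write down indeed only yields a constant that need not reach the target. Moreover, nothing in Lemma \ref{corto} requires sharpness in this regime -- that is precisely the point of it.

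The paper's argument is a crude scaling comparison and avoids all of this. With $\max\{\ai,\bi\}=R\eps$, one fixes $x_i^*\in D_i$ with $v(x_i^*)=0$ (or an endpoint if $v$ does not vanish), observes that $v_x>z_1+\eta$ on $D_i$ together with (H5) and $\eps\le\eta^q$ gives the pointwise lower bound $v(x)\ge \hat c\,\mathscr{L}\bigl((x_i^*,x)\cap\{v_x\ge z_2-\eta\}\bigr)-c\eps$, and then applies Lemma \ref{dasottointegrale} on the side of $x_i^*$ carrying at least $\tfrac R2\eps$ of the $\{|v_x-z_2|\le\eta\}\cup\{|v_x-z_3|\le\eta\}$ mass. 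This yields $\int_{D_i}v^2\gtrsim \eps^3R^3 - c\eps^3(1+R)$, so $\eps^{-2}\int_{D_i}v^2\gtrsim \eps R^3-c\eps(1+R)$. Since $A_0\ai+B_0\bi\le c^*R\eps$ grows only linearly in $R$, the cubic in $R$ wins for $R\ge R^*$ with $R^*$ the largest root of $\bar c R^3=(c+c^*)(R+1)$. No optimisation over oscillation numbers, no appeal to the optimal microstructure scale, and no constants matching $A_0$ or $B_0$ are needed. You should replace your sharp-oscillation argument for the large-$R$ regime by this simple cubic-versus-linear comparison.
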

\begin{proof}
{First we want to prove that if either $\ai$ or $\bi$ is too large, then $v$ also becomes large, and hence its $L^2-$norm on $D_i$ is bigger than $A_0\ai+B_0\bi$}. In order to do this, let $\max\{\ai,\bi\}=R\eps$ for some $R>0$. Let $D_i=(x^-_i,x^+_i)$. We assume the existence of $x_i^*\in(x^-_i,x^+_i)$ such that $v(x_i^*)=0$, but the following estimates hold in the case $v>0$ (or $v<0$) in $(x^-_i,x^+_i)$ by taking $x_i^*=x^-_i$ (resp. $x_i^*=x^+_i$). Assume also without loss of generality that
\beq
\label{circle}
\max\bigl\{\mathscr{L}\bigl((x_i^*,x_i^+)\cap\{ |v_x-z_2|\leq\eta\}\bigr),\mathscr{L}\bigl((x_i^*,x_i^+)\cap\{ |v_x-z_3|\leq \eta\}\bigr)\bigr\}\geq \eps\frac R2,
\eeq
the alternative case can be proved similarly by replacing below $(x_i^*,x_i^+)$ with $(x_i^-,x_i^*)$. {We now approximate from below $v$ in $(x_i^*,x_i^+)$ with a piecewise linear function minus a small error proportional to $\eps$. Later, we use Lemma \ref{dasottointegrale} to estimate the $L^2-$norm of $v$ from below with the $L^2-$norm of its piecewise linear lower bound.} We remark that, as $(y_i^-,y_i^+)$ is a $L-$interval for $v$, $v_x(x)> z_1+\eta$ for each $x\in (y_i^-,y_i^+)$. Therefore, 
\beq
\label{aaa000}
v(x) \geq \int_{x_i^*}^x v_x\,\mathrm dx
\geq (z_2-\eta)\mathscr{L}\bigl((x_i^*,x)\cap\bigl\{v_x-z_2\geq -\eta\bigr\}\bigr)+
(z_1+\eta)\mathscr{L}\bigl( (x_i^*,x)\cap \{z_1+\eta<v_x\leq 0\}\bigr).
\eeq
{The last term in \eqref{aaa000} can be controlled from below by $(z_1+\eta)\mathscr{L}(\Sigma^\eta)$, }
where 
\beq\label{sigmadelta}\Sigma^\eta:= \{x\in (0,1)\colon|v_x-z_k|>\eta,\,\forall k=1,2,3\}.\eeq 
Thus, by the boundedness of $I^\eps(v)$ and (H5), we can write
$$
c_0\mathscr{L}\bigl(\Sigma^\eta\bigr)\eta^q\leq \int_ {\Sigma^\eta}W(v_x)\,\mathrm dx\leq \int_ 0^1W(v_x)\,\mathrm dx\leq C \eps^2,
$$
which implies
\beq
\label{dacitarepoi 000}
\mathscr{L}\bigl(\Sigma^\eta\bigr)\leq c \eps^2 \eta^{-q}.
\eeq
It follows then from \eqref{aaa000}, \eqref{dacitarepoi 000} and $\eps\leq \eta^q$ that
\beq
v(x) \geq \hat c\mathscr{L}\bigl((x_i^*,x)\cap\bigl\{v_x-z_2\geq -\eta\bigr\}\bigr)-c\eps,
\eeq
for every $x\in (x_i^*,x_i^+)$ and some positive constant $\hat c$. Therefore, thanks to \eqref{circle} and Lemma \ref{dasottointegrale},
{
\[
\begin{split}
2\int_{x_i^*}^{x_i^+} v^2\,\mathrm dx + c \eps^2(x_i^+-x_i^*)&\geq
\hat c^2\int_{x_i^*}^{x_i^+} \Bigl( \mathscr{L}\bigl((x_i^*,x)\cap\bigl\{v_x-z_2\geq -\eta\bigr\}\bigr)\Bigr)^2\mathrm{d}x
\geq \hat c^2\int_{0}^{\eps\frac R2} x^2 \,\mathrm dx
\geq \frac{\hat c^2}8\eps^3 R^3,  \end{split}
\]
}
which, by using the fact that
\beq
\label{limito intervallo}
(x_i^+-x_i^*)\leq 2R\eps+\mathscr{L}(\Sigma^\eta)\leq c\eps(1+R),
\eeq
yields
\beq
\label{dasapere}
\int_{y_i^-}^{y^+_i}\bigl(\eps^4v_{xx}^2+\eps^{-2}W(v_x)+\eps^{-2}v^2\bigr)\,\mathrm dx\geq \eps^{-2}\int_{x_i^-}^{x^+_i}v^2\,\mathrm dx\geq\bar c\eps R^3-c\eps(1+R),
\eeq
for some $\bar c>0.$ On the other hand, there exists $c^*>0$ such that
\beq
\label{chestima}
A_0\ai+B_0\bi\leq c^* R\eps.
\eeq
Therefore, setting $R^*$ as the biggest root of $\bar cR^3 = (c+c^*)(R+1)$, we deduce that, if $R\geq R^*$, \eqref{dasapere}--\eqref{chestima} imply \eqref{maxcontrol}. \\

In order to show that $R$ cannot be too small, we recall that in every $L-$interval there are exactly two $A^\eta_\pm-$ transition layers. By \eqref{MMest}--\eqref{boundABt} we thus have
\[
\begin{split}
\int_{y_i^-}^{y^+_i}\bigl(\eps^4v_{xx}^2+\eps^{-2}W(v_x)+\eps^{-2}v^2\bigr)\,\mathrm dx&\geq \int_{y_i^-}^{y_i^+}
\geq \eps\tilde{c},
\end{split}
\]
for some $\tilde{c}>0$. Hence, if we set $R_*=\frac{\tilde c} {c^*}$, by \eqref{chestima} we deduce that \eqref{maxcontrol} holds for every $R<R_*$. 
We remark that $R^*,R_*$ do not depend on $i,v,\eps,\eta$ in any way. $R_*$ does not depend on $C$ either.
\end{proof}
Let $n_i$ be the even number of $B_\pm^{\eta}-$transition layers for $v$ in $D_i$. If $n_i=2$, we denote the $B_+^{\eta}$ and the $B_-^{\eta}-$transition layers for $v$ in $D_i$ respectively by $(z_{i,1}^-,z_{i,1}^+)$ and $(z_{i,2}^-,z_{i,2}^+)$, and we define $E_i$ as $E_i:=(z_{i,1}^+,z_{i,2}^-)$. We remark that, in general, $\bi<\mathscr{L}(E_i)$, but that $v_x(x)>z_2+\eta$ for every $x\in E_i.$ The energy estimates in Proposition \ref{sotto} below are different for the different types of $D-$intervals given in the following definition (see Figure \ref{ClassPict})
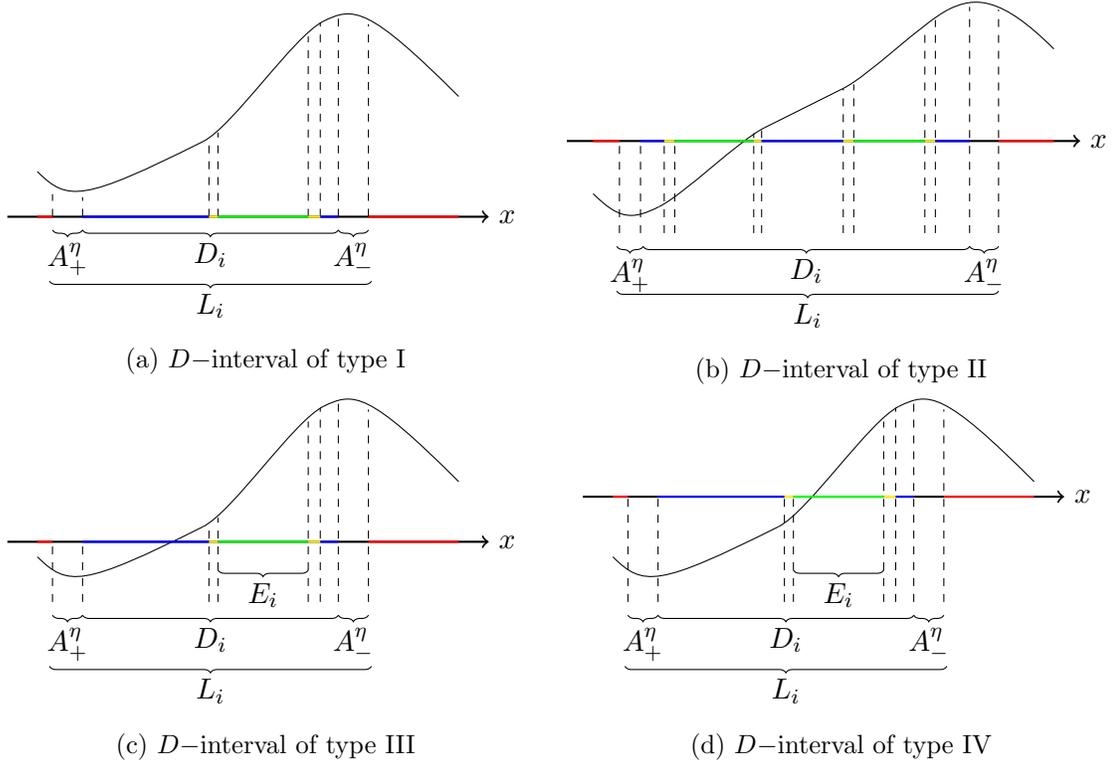
\begin{figure}
\centering
\begin{subfigure}{.45\textwidth}
  \centering
\begin{tikzpicture}[scale=0.4]
\draw (15,-0.5) .. controls (16,-1.5) and (16.5,-1.5) .. (20.5,0.5);
\draw (20.5,0.5) .. controls (21.5,1) and (23.5,4) .. (24.5,4.5);
\draw (24.5,4.5) .. controls (25.5,5) and (26,5) .. (29,2);
\draw [dashed,thin] (15.5,-2) -- (15.5,-1.25);
\draw [dashed,thin] (16.5,-2) -- (16.5,-1.35);
\draw [dashed,thin] (20.7,-2) -- (20.7,0.5);
\draw [dashed,thin] (21,-2) -- (21,0.8);
\draw [dashed,thin] (24.0,-2) -- (24.0,4);
\draw [dashed,thin] (24.4,-2) -- (24.4,4.45);
\draw [dashed,thin] (25,-2) -- (25,4.8);
\draw [dashed,thin] (26,-2) -- (26,4.4);
\draw [->,thick] (14,-2) -- (30,-2);
\filldraw [red] (30,-2) circle (0pt) 
node[anchor=west,black] {$x$};

\draw[decoration={brace,mirror,raise=5pt},decorate]
  (15.5,-2) -- node[below=6pt] {$A_+^\eta$} (16.5,-2);
\draw[decoration={brace,mirror,raise=5pt},decorate]
  (25,-2) -- node[below=6pt] {$A_-^\eta$} (26,-2); 

\draw[decoration={brace,mirror,raise=5pt},decorate]
  (16.5,-2) -- node[below=6pt] {$D_i$} (25,-2);
\draw[decoration={brace,mirror,raise=5pt},decorate]
  (15.4,-3.7) -- node[below=6pt] {$L_i$} (26.1,-3.7);
\draw [red,thick] (15,-2) -- (15.5,-2);
\draw [red,thick] (26,-2) -- (29,-2);
\draw [blue,thick] (16.5,-2) -- (20.7,-2);
\draw [blue,thick] (24.4,-2) -- (25,-2);
\draw [green,thick] (21,-2) -- (24,-2);
\draw [yellow,thick] (20.7,-2) -- (21,-2);
\draw [yellow,thick] (24,-2) -- (24.4,-2);
 \end{tikzpicture}
  \caption{$D-$interval of type I}
  \label{casosemplice I}
\end{subfigure}%
\begin{subfigure}{.45\textwidth}
  \centering
\begin{tikzpicture}[scale=0.35]
\draw (15,-0.5) .. controls (16,-1.5) and (16.5,-1.5) .. (17.5,-1);
\draw (17.5,-1) .. controls (18.5,-0.5) and (20.5,1.5) .. (21.5,2);
\draw (21.5,2) .. controls (22.5,2.5) .. (24.5,3.5);
\draw (24.5,3.5) .. controls (25.5,4) and (27.5,6) .. (28.5,6.5);
\draw (28.5,6.5) .. controls (29.5,7) and (30.5,7) .. (32.5,5);
\draw [dashed,thin] (16,-2) -- (16,1.5);
\draw [dashed,thin] (16.8,-2) -- (16.8,1.5);
\draw [dashed,thin] (17.7,-2) -- (17.7,1.35);
\draw [dashed,thin] (18.1,-2) -- (18.1,1.35);
\draw [dashed,thin] (21.1,-2) -- (21.1,1.8);
\draw [dashed,thin] (21.4,-2) -- (21.4,1.9);
\draw [dashed,thin] (24.5,-2) -- (24.5,3.5);
\draw [dashed,thin] (24.9,-2) -- (24.9,3.65);
\draw [dashed,thin] (27.6,-2) -- (27.6,5.8);
\draw [dashed,thin] (28,-2) -- (28,6.1);
\draw [dashed,thin] (29.3,-2) -- (29.3,6.6);
\draw [dashed,thin] (30.4,-2) -- (30.4,6.5);
\draw [->,thick] (14,1.5) -- (33.5,1.5);
\filldraw [red] (33.5,1.5) circle (0pt) 
node[anchor=west,black] {$x$};

\draw[decoration={brace,mirror,raise=5pt},decorate]
  (15.9,-2) -- node[below=6pt] {$A_+^\eta$} (16.9,-2);
\draw[decoration={brace,mirror,raise=5pt},decorate]
  (29.3,-2) -- node[below=6pt] {$A_-^\eta$} (30.4,-2); 

\draw[decoration={brace,mirror,raise=5pt},decorate]
  (16.9,-2) -- node[below=6pt] {$D_i$} (29.3,-2);
\draw[decoration={brace,mirror,raise=5pt},decorate]
  (15.9,-3.7) -- node[below=6pt] {$L_i$} (30.4,-3.7);
\draw [red,thick] (15,1.5) -- (16,1.5);
\draw [red,thick] (30.4,1.5) -- (32.5,1.5);
\draw [blue,thick] (16.8,1.5) -- (17.7,1.5);
\draw [yellow,thick] (17.7,1.5) -- (18.1,1.5);
\draw [green,thick] (18.1,1.5) -- (21.1,1.5);
\draw [yellow,thick] (21.1,1.5) -- (21.4,1.5);
\draw [blue,thick] (21.4,1.5) -- (24.5,1.5);
\draw [yellow,thick] (24.5,1.5) -- (24.9,1.5);
\draw [green,thick] (24.9,1.5) -- (27.6,1.5);
\draw [yellow,thick] (27.6,1.5) -- (28,1.5);
\draw [blue,thick] (28,1.5) -- (29.3,1.5);
 \end{tikzpicture}
  \caption{$D-$interval of type II}
  \label{casosemplice II}
\end{subfigure}%

\begin{subfigure}{.45\textwidth}
  \centering
\begin{tikzpicture}[scale=0.40]
\draw (15,-0.5) .. controls (16,-1.5) and (16.5,-1.5) .. (20.5,0.5);
\draw (20.5,0.5) .. controls (21.5,1) and (23.5,4) .. (24.5,4.5);
\draw (24.5,4.5) .. controls (25.5,5) and (26,5) .. (29,2);
\draw [dashed,thin] (15.5,-2) -- (15.5,0);
\draw [dashed,thin] (16.5,-2) -- (16.5,0);
\draw [dashed,thin] (20.7,-2) -- (20.7,0.5);
\draw [dashed,thin] (21,-2) -- (21,0.8);
\draw [dashed,thin] (24.0,-2) -- (24.0,4);
\draw [dashed,thin] (24.4,-2) -- (24.4,4.45);
\draw [dashed,thin] (25,-2) -- (25,4.8);
\draw [dashed,thin] (26,-2) -- (26,4.4);
\draw [->,thick] (14,0) -- (30,0);
\filldraw [red] (30,0) circle (0pt) 
node[anchor=west,black] {$x$};

\draw[decoration={brace,mirror,raise=5pt},decorate]
  (15.5,-2) -- node[below=6pt] {$A_+^\eta$} (16.5,-2);
\draw[decoration={brace,mirror,raise=5pt},decorate]
  (25,-2) -- node[below=6pt] {$A_-^\eta$} (26,-2); 
\draw[decoration={brace,mirror,raise=5pt},decorate]
  (21,-0.5) -- node[below=6pt] {$E_i$} (24,-0.5);
\draw[decoration={brace,mirror,raise=5pt},decorate]
  (16.5,-2) -- node[below=6pt] {$D_i$} (25,-2);
\draw[decoration={brace,mirror,raise=5pt},decorate]
  (15.4,-3.7) -- node[below=6pt] {$L_i$} (26.1,-3.7);
\draw [red,thick] (15,0) -- (15.5,0);
\draw [red,thick] (26,0) -- (29,0);
\draw [blue,thick] (16.5,0) -- (20.7,0);
\draw [blue,thick] (24.4,0) -- (25,0);
\draw [green,thick] (21,0) -- (24,0);
\draw [yellow,thick] (20.7,0) -- (21,0);
\draw [yellow,thick] (24,0) -- (24.4,0);
 \end{tikzpicture}
  \caption{$D-$interval of type III}
  \label{casosemplice III}
\end{subfigure}%
\begin{subfigure}{.45\textwidth}
  \centering
\begin{tikzpicture}[scale=0.40]
\draw (15,-0.5) .. controls (16,-1.5) and (16.5,-1.5) .. (20.5,0.5);
\draw (20.5,0.5) .. controls (21.5,1) and (23.5,4) .. (24.5,4.5);
\draw (24.5,4.5) .. controls (25.5,5) and (26,5) .. (29,2);
\draw [dashed,thin] (15.5,-2) -- (15.5,1.5);
\draw [dashed,thin] (16.5,-2) -- (16.5,1.5);
\draw [dashed,thin] (20.7,-2) -- (20.7,1.5);
\draw [dashed,thin] (21,-2) -- (21,1.5);
\draw [dashed,thin] (24.0,-2) -- (24.0,4);
\draw [dashed,thin] (24.4,-2) -- (24.4,4.45);
\draw [dashed,thin] (25,-2) -- (25,4.8);
\draw [dashed,thin] (26,-2) -- (26,4.4);
\draw [->,thick] (14,1.5) -- (30,1.5);
\filldraw [red] (30,1.5) circle (0pt) 
node[anchor=west,black] {$x$};

\draw[decoration={brace,mirror,raise=5pt},decorate]
  (15.5,-2) -- node[below=6pt] {$A_+^\eta$} (16.5,-2);
\draw[decoration={brace,mirror,raise=5pt},decorate]
  (25,-2) -- node[below=6pt] {$A_-^\eta$} (26,-2); 
\draw[decoration={brace,mirror,raise=5pt},decorate]
  (21,-0.5) -- node[below=6pt] {$E_i$} (24,-0.5);

\draw[decoration={brace,mirror,raise=5pt},decorate]
  (16.5,-2) -- node[below=6pt] {$D_i$} (25,-2);
\draw[decoration={brace,mirror,raise=5pt},decorate]
  (15.4,-3.7) -- node[below=6pt] {$L_i$} (26.1,-3.7);
\draw [red,thick] (15,1.5) -- (15.5,1.5);
\draw [red,thick] (26,1.5) -- (29,1.5);
\draw [blue,thick] (16.5,1.5) -- (20.7,1.5);
\draw [blue,thick] (24.4,1.5) -- (25,1.5);
\draw [green,thick] (21,1.5) -- (24,1.5);
\draw [yellow,thick] (20.7,1.5) -- (21,1.5);
\draw [yellow,thick] (24,1.5) -- (24.4,1.5);
 \end{tikzpicture}
  \caption{$D-$interval of type IV}
  \label{casosemplice IV}
\end{subfigure}%
 \caption{\label{ClassPict}Classification of $D-$intervals (cf. Definition \ref{GliDIntervals}). In this picture, the red, the blue and the green intervals are respectively the sets of points where $|v_x-z_1|\leq \eta$, $|v_x-z_2|\leq \eta,$ and $|v_x-z_3|\leq \eta$. The $B^\eta_\pm-$transition layers are coloured in yellow.}
\end{figure}
\begin{dfntn}
\label{GliDIntervals}
Let $v\in V$, $\eta\in(0,\eta_0)$ and $\eps\leq \eta^{q+1}$ be such that $I^\eps(v)\leq C$, with $C>0$. Let $D_i=(x_i^-,x^+_i)$ be the $i-$th $D-$interval for $v$, and $n_i\in 2\mathbb{N}$ be the number of $B_\pm^{\eta}-$transition layers for $v$ in $D_i$. Then we say that $D_i$ is of
\begin{itemize}
\item \textbf{type 0}: if $\max\{\ai,\bi\}\notin(\eps R_*,\eps R^*)$;
\item \textbf{type I}: if $\max\{\ai,\bi\}\in(\eps R_*,\eps R^*)$ and $v(x_i^-)v(x_i^+)\geq 0$;
\item \textbf{type II}: if $\max\{\ai,\bi\}\in(\eps R_*,\eps R^*)$, $v(x_i^-)v(x_i^+)< 0$ and either $n_i=0$ or $n_i\geq 4$;
\item \textbf{type III}: if $\max\{\ai,\bi\}\in(\eps R_*,\eps R^*)$, $v(x_i^-)v(x_i^+)< 0$, $n_i = 2$ and there exists no $x_i^*\in E_i$ such that $v(x_i^*)=0$;
\item \textbf{type IV}: if $\max\{\ai,\bi\}\in(\eps R_*,\eps R^*)$, $v(x_i^-)v(x_i^+)< 0$, $n_i = 2$ and there exists $x_i^*\in E_i$ such that $v(x_i^*)=0$.
\end{itemize}
In this definition $R_*,R^*$ are as in the statement of Lemma \ref{corto}. 
\end{dfntn}

\begin{figure}
\centering
\begin{tikzpicture}[scale=0.40]
\draw (7,4) .. controls (9,5) and (9.5,5) .. (15,-0.5);\draw (15,-0.5) .. controls (16,-1.5) and (16.5,-1.5) .. (20.5,0.5);
\draw (20.5,0.5) .. controls (21.5,1) and (23.5,4) .. (24.5,4.5);
\draw (24.5,4.5) .. controls (25.5,5) and (26,5) .. (28,3);
\draw (28,3) .. controls (29,2) and (29.5,3) .. (33.5,5);
\draw (33.5,5) .. controls (35.5,6) and (36,6) .. (42,0);
\draw [->,thick] (6,1) -- (44,1);
\filldraw [red] (44,1) circle (0pt) 
node[anchor=west,black] {$x$};

\draw [dashed,thin] (15.5,-2) -- (15.5,1);
\draw [dashed,thin] (16.5,-2) -- (16.5,1);
\draw [dashed,thin] (25,-2) -- (25,4.8);
\draw [dashed,thin] (26,-2) -- (26,4.4);
\draw [dashed,thin] (28.2,-2) -- (28.2,2.8);
\draw [dashed,thin] (29.2,-2) -- (29.2,2.7);
\draw [dashed,thin] (34.5,-2) -- (34.5,5.4);
\draw [dashed,thin] (36,-2) -- (36,5.3);

\draw [dashed,thin] (13.5,-2) -- (13.5,1);
\draw [dashed,thin] (41,-2) -- (41,1);

\draw[decoration={brace,mirror,raise=5pt},decorate]
  (12.99,-2) -- node[below=6pt] {$G_i^m$} (15.5,-2);
\draw[decoration={brace,mirror,raise=5pt},decorate]
  (26,-2) -- node[below=6pt] {$G_i^d$} (28.2,-2);\draw[decoration={brace,mirror,raise=5pt},decorate]
  (36,-2) -- node[below=6pt] {$G_i^d$} (41,-2);\draw[decoration={brace,mirror,raise=5pt},decorate]
  (15.5,-2) -- node[below=6pt] {$A_+^\eta$} (16.5,-2);
\draw[decoration={brace,mirror,raise=5pt},decorate]
  (28.2,-2) -- node[below=6pt] {$A_+^\eta$} (29.2,-2);
\draw[decoration={brace,mirror,raise=5pt},decorate]
  (34.5,-2) -- node[below=6pt] {$A_-^\eta$} (36,-2); 
  \draw[decoration={brace,mirror,raise=5pt},decorate]
  (25,-2) -- node[below=6pt] {$A_-^\eta$} (26.1,-2); 
\draw[decoration={brace,mirror,raise=5pt},decorate]
  (16.5,-2) -- node[below=6pt] {$D_i$} (25,-2);
\draw[decoration={brace,mirror,raise=5pt},decorate]
  (28.2,-3.7) -- node[below=6pt] {$L_{i+1}$} (36,-3.7);
\draw[decoration={brace,mirror,raise=5pt},decorate]
  (29.2,-2) -- node[below=6pt] {$D_{i+1}$} (34.5,-2);
\draw[decoration={brace,mirror,raise=5pt},decorate]
  (15.4,-3.7) -- node[below=6pt] {$L_{i}$} (26.1,-3.7);
\draw [red,thick] (9.4,1) -- (15.5,1);
\draw [red,thick] (26,1) -- (28.2,1);
\draw [red,thick] (36,1) -- (42,1);
\draw [blue,thick] (16.5,1) -- (20.7,1);
\draw [blue,thick] (7,1) -- (8.2,1);
\draw [blue,thick] (24.4,1) -- (25,1);
\draw [blue,thick] (29.2,1) -- (34.5,1);
\draw [green,thick] (21,1) -- (24,1);
\draw [yellow,thick] (20.7,1) -- (21,1);
\draw [yellow,thick] (24,1) -- (24.4,1);
 \end{tikzpicture}
 \caption{\label{SigmaIntPict}Examples of sets $\Sigma_i$ constructed in the proof of Proposition \ref{sotto}. Here, $\Sigma_{i+1}=\varnothing$ because $D_{i+1}$ is of type I, while $\Sigma_i = \Sigma_i^d\cup\Sigma_i^m$, where $\Sigma_i^d\subset G_i^d$, $\Sigma_i^m\subset G_i^m.$ The sets $G_i^d,G_i^m$ are constructed as follows: let $D_i=(x_i^-,x_i^+)$, then $G_i^d$ (resp. $G_i^m$) is the intersection of $\{|v_x-z_1|\leq \eta\}$ with the largest interval $(x_i^+,p^+)$, $x_i^+\leq p^+$ (resp. $(p^-,x_i^-)$, $x_i^-\geq p^-$), where $v$ is strictly positive (resp. negative). \\
The red, the blue and the green intervals are respectively the sets of points where $|v_x-z_1|\leq \eta$, $|v_x-z_2|\leq \eta,$ and $|v_x-z_3|\leq \eta$. The $B^\eta_\pm-$transition layers are coloured in yellow.}
\end{figure}
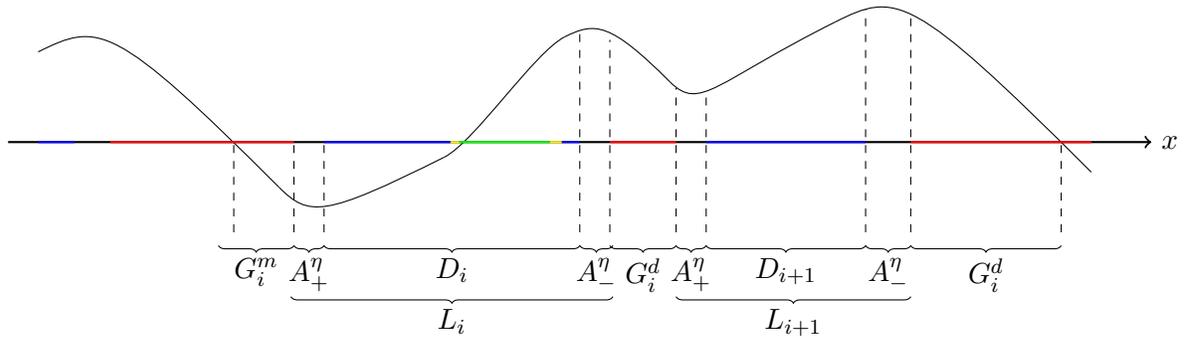

{In Proposition \ref{sotto} below we prove some lower bounds for the $L^2-$norm of $v$ in the $D_i$'s. Then, we identify disjoint sets $\Sigma_i$, $i=1,\dots,N_v$ in which $|v_x-z_1|\leq \eta$, and in these sets we estimate from below the $L^2-$norm of $v$ in terms of $\ai,\bi$. We then combine the estimates in the $\Sigma_i$'s with the estimates in the $L_i$'s, and we argue as in \eqref{minimideps} to obtain a lower bound for the energy on the sets $F_i=L_i\cup\Sigma_i$. The results of Proposition \ref{sotto} are the basic tool to prove both Theorem \ref{bound da sotto thm}, from which follows Theorem \ref{main Thm}, and Theorem \ref{ThmScaling}, from which follows Theorem \ref{main Thm 2}.}

\begin{prpstn}
\label{sotto}
Assume {\rm(H1)--(H5)} and let $C>0$. Then, there exists $\eta_1=\eta_1(C)\in(0,\eta_0)$ such that, for every $\eta\in(0,\eta_1)$, $\eps\leq\eta^{q+1}$ and $v\in V$ satisfying $I^{\eps}(v)\leq C$, 
there exists a collection of (possibly empty) Borel sets $\Sigma_i$, $i=1,\dots,N_v$ such that
$$\Sigma_i\cap\Sigma_j=\Sigma_i\cap L_j=\Sigma_i\cap L_i=\varnothing,\qquad\text{ for every $j\neq i\in\{1,\dots,N_v\}$}, $$
and, for every $i=1,\dots,N_v$,
\beq
\label{CheStimeSotto}
\begin{aligned}
&\int_{L_i\cup\Sigma_i}\bigl(\eps^4v_{xx}^2+\eps^{-2}W(v_x)+\eps^{-2}v^2\bigr)\,\mathrm dx\geq A_0\ai +B_0\bi,\quad&&\text{if $D_i$ is type 0,}\\
&\int_{L_i\cup\Sigma_i}\bigl(\eps^4v_{xx}^2+\eps^{-2}W(v_x)+\eps^{-2}v^2\bigr)\,\mathrm dx\geq \ai f_6^\frac{1}{3}\bigl(\frac{\bi}{\ai}\bigr) - c\eps\eta,\quad&&\text{if $D_i$ is type I, 
}\\
&\int_{L_i\cup\Sigma_i}\bigl(\eps^4v_{xx}^2+\eps^{-2}W(v_x)+\eps^{-2}v^2\bigr)\,\mathrm dx\geq \ai f_7^\frac{1}{3}\bigl(\frac{\bi}{\ai}\bigr) - c\eps\eta,\quad&&\text{if $D_i$ is type II, 
}\\
&\int_{L_i\cup\Sigma_i}\bigl(\eps^4v_{xx}^2+\eps^{-2}W(v_x)+\eps^{-2}v^2\bigr)\,\mathrm dx\geq \ai f_8^\frac{1}{3}\bigl(\frac{\bi}{\ai}\bigr) - c\eps\eta,\quad&&\text{if $D_i$ is type III/IV}
\end{aligned}
\eeq
where $f_6,f_7,f_8$ are as in {\rm(H6)--(H8)}.
\end{prpstn}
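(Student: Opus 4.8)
The proof is the long case analysis announced before Definition \ref{GliDIntervals}, and I only sketch its architecture. If $D_i$ is of type $0$ there is nothing to do: the estimate is exactly Lemma \ref{corto}, and one takes $\Sigma_i=\varnothing$. So assume from now on that $\max\{\ai,\bi\}\in(\eps R_*,\eps R^*)$, in which case $\mathscr{L}(L_i)\le c\eps$ by \eqref{limito intervallo}. The first, type-independent step is a \emph{normalisation of $v$ on $D_i$}. Using (H5), the bound $I^{\eps}(v)\le C$ and the argument behind \eqref{dacitarepoi 000}, the subset of $D_i=(x_i^{-},x_i^{+})$ on which $v_x$ is within $\eta$ of neither $z_2$ nor $z_3$ has measure $\le c\eps^{2}\eta^{-q}+c\eps^{2}$, which under $\eps\le\eta^{q+1}$ produces only errors $\le c\eps\eta$ below; hence $\mathscr{L}(D_i)=\ai+\bi+O(\eps\eta)$, and since $v_x>z_1+\eta$ on $D_i$ with $|z_1+\eta|$ small one gets $v(x)\ge v(x_i^{-})-c\eps\eta$ for all $x\in D_i$, i.e.\ $v$ is non-decreasing on $D_i$ up to an error $c\eps\eta$. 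Replacing $v_x$ on $D_i$ by its non-negative part and applying Lemma \ref{dasottointegrale} and Remark \ref{resdasottoint rem} with the disjoint level sets $\{|v_x-z_2|\le\eta\}$ and $\{|v_x-z_3|\le\eta\}$, one bounds $\int_{D_i}v^{2}$ from below, up to $c\eps\eta$, by the $L^{2}$-norm of the convex piecewise-linear profile starting at height $v(x_i^{-})$, rising with slope $z_2$ on a set of length $\ai$ and then with slope $z_3$ on a set of length $\bi$, thus reaching height $v(x_i^{-})+z_2\ai+z_3\bi$.

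Next one builds the sets $\Sigma_i$ as in Figure \ref{SigmaIntPict}: letting $G_i^{d}$ (resp.\ $G_i^{m}$) be the intersection of $\{|v_x-z_1|\le\eta\}$ with the largest interval $(x_i^{+},p^{+})$ on which $v>0$ (resp.\ $(p^{-},x_i^{-})$ on which $v<0$), take $\Sigma_i^{d}\subset G_i^{d}$ and $\Sigma_i^{m}\subset G_i^{m}$ to be the sub-intervals on which $v$ runs monotonically (at slope $\approx z_1$) between $0$ and $v(x_i^{+})$, resp.\ between $0$ and $v(x_i^{-})$ (possibly empty), and set $\Sigma_i=\Sigma_i^{d}\cup\Sigma_i^{m}$, $F_i=L_i\cup\Sigma_i$. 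Since each maximal sign-definite sub-interval of $\{|v_x-z_1|\le\eta\}$ is adjacent to at most one $D$-interval, the $\Sigma_i$ are pairwise disjoint and disjoint from every $L_j$. On $\Sigma_i$ there is no interface energy, while the same rearrangement bounds $\int_{\Sigma_i}v^{2}$ from below by the $L^{2}$-norm of the two linear pieces of slope $z_1$ joining $0$ to $v(x_i^{\pm})$. Concatenating the profile on $D_i$ with that on $\Sigma_i$ produces a piecewise-linear comparison function on $F_i$ with one free parameter, the position of the zero of $v$ (equivalently $v(x_i^{-})$), constrained according to the type; minimising its $L^{2}$-norm over this parameter yields $\int_{F_i}v^{2}\ge\mathcal M_i-c\eps\eta$, where $\mathcal M_i$ is the explicit cubic in $\ai,\bi$ coming from the optimal profile — for type I there is no sign change to optimise, for types III/IV the optimisation of the crossing point is what generates the subtracted term in $f_8$, and type II is handled similarly. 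On the other hand, the Modica--Mortola inequality \eqref{MMest}, applied to the two $A_{\pm}^{\eta}$-transition layers of $L_i$ together with the $B_{\pm}^{\eta}$-transition layers inside $D_i$, gives $\int_{L_i}\bigl(\eps^{4}v_{xx}^{2}+\eps^{-2}W(v_x)\bigr)\ge\eps\,\mathcal I_i-c\eps\eta$ with $\mathcal I_i=2(E_0+E_1)$ for types I, III and IV and $\mathcal I_i=2(E_0+2E_1)$ for type II. Since $\eps$ is an admissible value of $d$, as in \eqref{minimideps},
\[
\int_{F_i}\bigl(\eps^{4}v_{xx}^{2}+\eps^{-2}W(v_x)+\eps^{-2}v^{2}\bigr)\,\mathrm dx
\ge \eps\,\mathcal I_i+\eps^{-2}\mathcal M_i-c\eps\eta
\ge \min_{d>0}\bigl\{d\,\mathcal I_i+d^{-2}\mathcal M_i\bigr\}-c\eps\eta
= \tfrac{3}{2^{2/3}}\,\mathcal I_i^{2/3}\mathcal M_i^{1/3}-c\eps\eta ,
\]
and a direct computation identifies $\tfrac{3}{2^{2/3}}\mathcal I_i^{2/3}\mathcal M_i^{1/3}$ with $\ai f_k^{1/3}(\bi/\ai)$ for the corresponding $k\in\{6,7,8\}$, which is \eqref{CheStimeSotto}. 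It then suffices to take $\eta_1$ small enough that all the accumulated errors stay of size $c\eps\eta$ uniformly for $\eta<\eta_1$.

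The hard part is the case analysis itself. Types III and IV ($n_i=2$, a single $z_3$-bump with $v$ changing sign) demand pinning down the sharp $L^{2}$-minimising location of the zero of $v$ — inside the bump for type IV, in a $z_2$-segment for type III — which is exactly what produces the otherwise opaque negative term in $f_8$; and type II splits further into $n_i=0$ and $n_i\ge4$, whose interface contributions must be reconciled with the single constant $2(E_0+2E_1)$. Simultaneously one must keep the $\Sigma_i$ pairwise disjoint yet charge enough $L^{2}$-mass to every $F_i$: when two $D$-intervals are close the sign-definite piece of $\{|v_x-z_1|\le\eta\}$ between them is short and one of $\Sigma_i^{m}$, $\Sigma_i^{d}$ may be forced empty, so the "missing" descent of $v$ has to be absorbed by the neighbouring $L$-interval instead. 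Making all these sharp constants match $f_6,f_7,f_8$ exactly while only ever spending errors of order $c\eps\eta$ is the delicate bookkeeping that constitutes the bulk of the argument.
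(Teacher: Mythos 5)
Your architecture is exactly the paper's: dispose of type $0$ by Lemma \ref{corto}; on the remaining $D$-intervals, rearrange via Lemma \ref{dasottointegrale} and Remark \ref{resdasottoint rem} to bound $\int_{D_i} v^2$ from below in terms of $\ai,\bi$ and the crossing position; build $\Sigma_i$ from the descent of $v$ into the $z_1$-well on either side of $D_i$ and estimate $\int_{\Sigma_i}v^2$ the same way; take interfacial energy from Modica--Mortola; and close with the $\min_d$ trade-off of \eqref{minimideps}. You also correctly locate the two delicate ingredients — the optimisation over the crossing position that produces the subtracted term in $f_8$, and the sign-definiteness argument behind disjointness of the $\Sigma_i$'s.

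The one place you sense trouble is where trouble actually lies, and you should not wave it away. You assert $\mathcal I_i = 2(E_0+2E_1)$ for every type-II $D$-interval, but type II admits $n_i=0$. Then \eqref{stima sopra per sempre} only delivers $2\eps E_0$, and since $n_i=0$ forces $\bi=0$, the $L^2$-mass entering $\min_d$ is $\tfrac1{12}z_2^2 z_{21}\ai^3$, so the optimisation returns exactly $A_0\ai$ — while $\ai f_7^{1/3}(0)=(\tfrac32)^{2/3}(E_0+2E_1)^{2/3}(z_2^2z_{21})^{1/3}\ai>A_0\ai$ whenever $E_1>0$. So the third line of \eqref{CheStimeSotto} cannot come out of your sketch with $\mathcal I_i=2(E_0+2E_1)$ when $n_i=0$; the same applies to the second line (type I) with $n_i=0$, where $\ai f_6^{1/3}(0)$ also exceeds the $A_0\ai$ that the $\min_d$ calculation yields. (The paper's own Step 3 has the same slippage: combining \eqref{stima sopra per sempre} with \eqref{boundsotto01dacit} or \eqref{idontknowwhat} and optimising in $d$ gives $\ai f_6^{1/3}$ only for $n_i\ge 2$ and $\ai f_7^{1/3}$ only for $n_i\ge 4$; what the $n_i=0$ case actually produces is $A_0\ai$, which is all Theorem \ref{bound da sotto thm} eventually uses, so the downstream results are unaffected but the estimate \eqref{CheStimeSotto} as written is not proved in that edge case.) A smaller imprecision: you minimise over a single parameter, "the position of the zero", whereas the paper carries two free parameters $\omega_i^a,\omega_i^b$ — the fractions of the $z_2$- and $z_3$-level sets of $v_x$ on each side of $x_i^*$ — which are determined by $v$ independently of one another and must both be swept; collapsing them to one parameter implicitly constrains the class of $v$ you are bounding.
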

\begin{rmrk}
\rm
{The first, the third and the fourth lower bounds in Proposition \ref{sotto} are sharp up to an error proportional to some positive power of $\eps$. Sharpness of the first bound is given by Proposition \ref{upper bnd}. For the third and the fourth bound we refer the reader to the proofs of 
Proposition \ref{H7 prop} and Proposition \ref{H8 prop} respectively.}
\end{rmrk}
\begin{proof}
Let $C>0,$ $v\in V,$ $\eps\leq \eta^{q+1}$ with $I^\eps(v)\leq C$ and $\eta\in(0,\eta_1)$ with $\eta_1$ to be determined later. We divide the proof in three steps: in the first we prove the estimates \eqref{stima 1 sale},\eqref{stima 3 sale},\eqref{stima 2 scende} and \eqref{stima 3 scende} for the $L^2-$norm of $v$ in the $D-$intervals. As explained above, the estimates are different for different types of $D-$intervals. In step two we construct the sets $\Sigma_i$ for every $i=1,\dots,N_v$, and we estimate from below the $L^2-$norm of $v$ on these sets in terms of $\ai,\bi$. Finally, in the last step we combine the estimates for the $L^2-$norm of $v$, with the estimates for the interfacial energy, and deduce \eqref{CheStimeSotto} by means of \eqref{minimideps}.\\

{
\textbf{Step 1:} The strategy to prove estimates for the $L^2-$norm of $v$ in the $D-$intervals is the following: for $i=1,\dots,N_v$ we divide $D_i$ into two intervals (one is actually empty if $D_i$ is of type I), one in which $v$ is bigger or equal than $-c\eps\eta$, one in which $v$ is smaller or equal than $c\eps\eta$. As shown below, for $D-$intervals of type II/IV choosing $x_i^*\in D_i$ such that $v(x_i^*)=0$ and taking $D_i\cap\{x\geq x_i^*\},D_i\cap\{x\leq x_i^*\}$ as sub-intervals suffices. In each interval we approximate $v$ with a suitable continuous piecewise linear function with gradient a.e. in $\{0,z_2,z_3\}$, namely $\sum_{k=2,3}z_k\mathscr{L}\bigl((x_i^*,x)\cap\bigl\{|v_x-z_k|\leq \eta\bigr\}\bigr)$, and use Lemma \ref{dasottointegrale} to bound its $L^2-$norm from below. In conclusion we combine the estimates obtained in the two sub-intervals of $D_i$. The estimates \eqref{stima 2 scende} and \eqref{stima 3 scende} for type III/IV $D-$intervals depend on the quantities $\omega_i^a,\omega_i^b\in[0,1]$ (defined in \eqref{omegas def} below). Estimates \eqref{stima 2 scende} and \eqref{stima 3 scende} need to be combined with the estimates of Step 2 before minimising over $\omega_i^a,\omega_i^b$.}

Let $D_i=(x_i^-,x_i^+)$, and let us first focus on the case $v(x^-_i)v(x^+_i)<0$. We notice that the continuity of $v$, implies the existence of $x_i^*\in(x^-_i,x^+_i)$ such that $v(x_i^*)=0$. We estimate separately $\int_{x_i^*}^{x_i^+}v^2\,\mathrm{d}s$ and $\int_{x_i^-}^{x_i^*}v^2\,\mathrm{d}s$. Let us start with the first. As $v_x>z_1+\eta $ in $(x^-_i,x^+_i)$, we have
\beq
\label{aaa}
v(x) \geq \int_{x_i^*}^x v_x\,\mathrm dx
\geq \sum_{k=2,3}(z_k-\eta)\mathscr{L}\bigl((x_i^*,x)\cap\bigl\{|v_x-z_k|\leq \eta\bigr\}\bigr)+
(z_1+\eta)\mathscr{L}\bigl(\Sigma^\eta\bigr)
\eeq
where $\Sigma^\eta$ is as in \eqref{sigmadelta} and satisfies \eqref{dacitarepoi 000}. 
It follows then from \eqref{aaa} and the assumption $\max\{\ai,\bi\}\leq R^*\eps$ that
\beq
\label{aaa200}
v(x) \geq\sum_{k=2,3}z_k\mathscr{L}\bigl((x_i^*,x)\cap\bigl\{|v_x-z_k|\leq \eta\bigr\}\bigr)-c\frac{\eps^2}{\eta^q}-c\eps\eta R^*,
\eeq
for every $x\in (x_i^*,x_i^+)$. We now use the fact that, as $\eta\in(0,1)$, $(1-\eta)(a+b)^2 \leq a^2 + 2\eta^{-1}b^2 $. This yields
\beq
\label{eqq right}
(1-\eta)\int_{x_i^*}^{x_i^+} \bigl(v(x) +c({\eps^2}\eta^{-q}+\eps\eta)\bigr)^2\mathrm dx 
\leq \int_{x_i^*}^{x_i^+}v^2\,\mathrm dx
+ c \hat r(x_i^+-x_i^*),
\eeq
with $\hat r:=\eta^{-1}({\eps^2}\eta^{-q}+\eps\eta)^2.$
On the other hand, by Lemma \ref{dasottointegrale},
\beq\label{stimona99}
\begin{split}
\int_{x_i^*}^{x_i^+} \bigl(v(x) +c({\eps^2}\eta^{-q}+\eps\eta)\bigr)^2\mathrm dx
&
\geq \int_{x_i^*}^{x_i^+} \Bigl( \sum_{k=2,3}z_k\mathscr{L}\bigl((x_i^*,x)\cap\bigl\{|v_x-z_k|\leq \eta\bigr\}\bigr)\Bigr)^2\mathrm{d}x
\\&\geq z_2^2\int_{0}^{\omega^a_i\ai} x^2\,\mathrm dx+\int_{0}^{\omega^b_i\bi} (z_3x+z_2\omega^a_i\ai)^2 \mathrm dx
\\&\geq 3^{-1}\bigl(z_2^{2}(\omega^a_i\ai)^3+z_3^2(\omega^b_i\bi)^3+g_0 (\ai, \bi , \omega^a_i,\omega^b_i )\bigr)
\end{split}
\eeq
where $\omega^a_i,\omega^b_i\in[0,1]$ are such that
\beq
\label{omegas def} 
\mathscr{L}\bigl(D_i\cap\bigl\{x>x_i^*\colon|v_x-z_2|\leq\eta\bigr\}\bigr)= \omega^{a}_i\ai,\qquad \mathscr{L}\bigl(D_i\cap\bigl\{x>x_i^*\colon|v_x-z_3|\leq\eta\bigr\}\bigr)= \omega^{b}_i\bi,
\eeq 
and $g_0 (a, b , \omega^a,\omega^b )=
3z_2ab\omega^a\omega^b(z_2a\omega^a +z_3b\omega^b ).$ Therefore, putting together \eqref{eqq right}--\eqref{stimona99}, by \eqref{limito intervallo} we obtain
{\beq
\label{stimona} 
\begin{split}
\int_{x_i^*}^{x_i^+}v^2\,\mathrm dx
&\geq (1-\eta)3^{-1}\bigl(z_2^{2}(\omega^a_i\ai)^3+z_3^2(\omega^b_i\bi)^3+g_0 (\ai, \bi , \omega^a_i,\omega^b_i )\bigr) - c\eps^3\eta
\\&\geq 3^{-1}\bigl(z_2^{2}(\omega^a_i\ai)^3+z_3^2(\omega^b_i\bi)^3+g_0 (\ai, \bi , \omega^a_i,\omega^b_i )\bigr) - c\eps^3\eta.
\end{split}
\eeq
Here we also used $\max\{\ai,\bi\}\leq R^*\eps$ and $\eps\leq\eta^{q+1}.$
}In the same way, we prove
\beq
\label{omegab2}
\int_{x_i^-}^{x_i^*}v^2\,\mathrm dx 
+ c\eps^3\eta 
\geq
3^{-1}\bigl(z_2^2((1-\omega^a_i)\ai)^3+z_3^2((1-\omega^b_i)\bi)^3+g_0 (\ai, \bi , 1-\omega^a_i,1-\omega^b_i )\bigr).
\eeq
It turns out that if we sum \eqref{stimona} to \eqref{omegab2} the right hand side is a convex quadratic polynomial in $\omega^a_i,\omega^b_i$ with minimum in $\omega^a_i=\omega^b_i=\frac12$. Therefore, from \eqref{stimona} and \eqref{omegab2} we finally get 
\beq
\label{stima 1 sale}
\int_{D_i} v^2\,\mathrm dx\geq {12}^{-1} h(\ai,\bi) - c\eps^3\eta,\qquad\text{for all $D_i$ of type II,}
\eeq
where
\beq
\label{defHnew}
h(a,b): = z_2^{2} a^3+z_3^2 b^3+3\bigl( a b z_2(z_2 \ai+ z_3\bi)\bigr).
\eeq
The estimate 
\beq
\label{stima 3 sale}
\int_{D_i}v^2\,\mathrm dx\geq 3^{-1} h(\ai,\bi)- c\eps^3\eta,\qquad\text{for all $D_i$ of type I,}
\eeq
follows again from \eqref{stimona} (or \eqref{omegab2}) if $v(x^-_i)>0$ (resp. $v(x^+_i)<0$) which can be deduced via the same argument by setting $x_i^*=x_i^-$ (resp. $x_i^*=x_i^+$). In this case, the above estimates hold with $\omega_i^{a},\omega_i^{b}=1$ (resp. $\omega_i^{a},\omega_i^{b}=0$).\\
Let us suppose now that $v(x_i^-)v(x_i^+)<0$ and that $n_i=2$. {If $D_i$ is of type III, a simple combination of \eqref{stimona}--\eqref{omegab2} leads to
\beq
\label{stima 2 scende}
\int_{D_i}v^2\,\mathrm dx\geq 3^{-1}\hat h_2(\ai,\bi,\omega_i^a,\omega_i^b)- c\eps^3\eta,\qquad\text{for all $D_i$ of type III}, 
\eeq
where
\beq
\label{defHhatnew}
\begin{split}
\hat h_k(a,b,\omega^a,\omega^b)&: = z_2^{2} a^3+z_3^2 b^3+3\bigl(z_2^{2}\omega^a (\omega^a-1)a^3+z_3^{2}\omega^b(\omega^b-1)b^3\bigr)\\ 
&+ z_k3ab\bigl(\omega^a\omega^b(z_2\omega^a a+ z_3\omega^b b)+ (1-\omega^a)(1-\omega^b)(z_2(1-\omega^a)a+ z_3(1-\omega^b) b)\bigr)
\end{split}
\eeq
If $D_i$ is of type IV, we can assume that $x^*_i\in E_i$, and the above estimates can be improved. Indeed, by using Remark \ref{resdasottoint rem} first with $\tau_2=z_3,\tau_1=\tau_0=0,\mathcal{C}_2=\{x\in (x_i^*,x_i^+)\colon |v_x(x)-z_3|\leq \eta\}$, $(a,b) = E_i\cap(x_i^*,x_i^+)$ and then with $(a,b) = (x_i^*,x_i^+)\setminus E_i$ $\tau_2=z_2,\tau_1=0,$ $\tau_0 = z_3\omega_i^b\beta_i$, $\mathcal{C}_2=\{x\in (x_i^*,x_i^+)\colon |v_x(x)-z_2|\leq \eta\}$, we can modify \eqref{stimona99} as follows
\[
\label{stimona 2}
\begin{split}
\int_{{x}_i^*}^{x_i^+} \Bigl( \sum_{k=2,3}z_k\mathscr{L}\bigl((x_i^*,x)\cap\bigl\{|v_x-z_k|\leq \eta\bigr\}\bigr)\Bigr)^2\mathrm{d}x
&\geq z_3^2\int_{0}^{\omega^b_i\bi} x^2\,\mathrm dx+\int_{0}^{\omega^a_i\ai} (z_2x+z_3\omega^b_i\bi)^2 \,\mathrm dx
\\
&\geq
3^{-1}\bigl(z_2^{2}(\omega^a_i\ai)^3+z_3^{2}(\omega^b_i\bi)^3+\frac{z_3}{z_2} g_0 (\ai, \bi , \omega^a_i,\omega^b_i )\bigr),
\end{split}
\]
}which by \eqref{aaa200}--\eqref{eqq right} yields
$$
\int_{x_i^*}^{x_i^+}v^2\,\mathrm dx+ c\eps^3\eta\geq3^{-1}\bigl(z_2^{2}(\omega^a_i\ai)^3+z_3^{2}(\omega^b_i\bi)^3+\frac{z_3}{z_2} g_0 (\ai, \bi , \omega^a_i,\omega^b_i )\bigr).
$$
In the same way, we prove
\beq
\label{omegab2 2}
\begin{split}
\int_{x_i^-}^{x_i^*}v^2\,\mathrm dx&+ c\eps^3\eta\geq 3^{-1}\bigl(z_2^{2}((1-\omega^a_i)\ai)^3+z_3^{2}((1-\omega^b_i)\bi)^3+\frac{z_3}{z_2} g_0 (\ai, \bi ,1- \omega^a_i,1-\omega^b_i )\bigr)
\end{split}
\eeq
By summing up the last two inequalities, we hence get
\beq
\label{stima 3 scende}
\int_{D_i}v^2\,\mathrm dx\geq 3^{-1}\hat h_3(\ai,\bi,\omega_i^a,\omega_i^b)- c\eps^3\eta,\qquad\text{for all $D_i$ of type IV}, 
\eeq
We remark that, in this case, the determinant of the Hessian matrix of $\hat h$ with respect to $\omega_i^a,\omega_i^b$ is negative, and hence $\hat h$ cannot be bounded from below by choosing $\omega_i^a=\omega_i^b=\frac12$.\\

\textbf{Step 2:} For every $i=1,\dots,N_v$, we now construct $\Sigma_i$ and estimate $\int_{\Sigma_i}v^2\,\mathrm{d}x$ from below. Finally, we show that the $\Sigma_i$'s are disjoint. As shown in Figure \ref{SigmaIntPict}, $\Sigma_i=\Sigma_i^d\cup\Sigma_i^m$, where $\Sigma_i^d,\Sigma_i^m$ are subsets of $\{|v_x-z_1|\leq\eta\}$ and respectively of $\{v>0\}$ and $\{v< 0\}.$ 

We first set $\Sigma_i=0$ whenever $D_i$ is of type 0 or of type I. We can hence focus on the $i$'s where $D_i=(x_i^-,x_i^+)$ is such that $v(x_i^+)v(x_i^-)<0$ and $\max\{\ai,\bi\}\in(\eps R_*,\eps R^*)$, that is on type II--IV $D-$intervals. The idea is to bound from below $v$ (or from above) on the set where $|v_x-z_1|\leq\eta$ and $v\geq 0$ (resp. $v\leq 0$) with a continuous piecewise-linear function minus (resp. plus) a small error. We then estimate the $L^2-$norm of the piecewise linear approximation of $v$ and express it in terms of $\ai,\bi$. {We denote by $x^*_i$ a point in $D_i$ (the same that was chosen in Step 1) such that $v(x^*_i)=0$ and such that $x_i^*\in E_i$ if $D_i$ is of type IV. }
From \eqref{aaa200} we have 
\beq
\label{aaa0}
v(x^+_i)\geq z_2 \omega_i^a\ai+z_3 \omega_i^b\bi -c\eta\eps,
\eeq
and, in a similar way, we can prove
\beq
\label{aaa1}
\begin{split}
v(x^-_i)\leq -(z_2 (1-\omega_i^a)\ai + z_3 (1-\omega_i^b)\bi) +c\eta\eps,
\end{split}
\eeq
where $\omega_i^a,\omega_i^b$ are as in \eqref{omegas def}. We now claim that there exist $\eta_1\in(0,\eta_0]$ depending just on $R_*,R^*$ and $C$, such that, for every $\eta<\eta_1$, $v(x_i^+)>0$ and $v(x_i^-)<0$. Indeed, we recall that we are working under the assumption $\max\{\ai,\bi\}\geq R_*\eps$, and we suppose without loss of generality that $\ai\geq R_*\eps$; the case $\bi\geq R_*\eps$ can be treated similarly. Suppose first that $\omega_i ^a\geq \frac12$, then \eqref{aaa0} implies $v(x_i^+)\geq \frac12z_2\eps R_* - c\eta\eps.$ Thus, choosing $\eta$ small enough, we get $v(x_i^+)>0$, and, by the fact that $v(x_i^+)v(x_i^-)<0$, also that $v(x_i^-)<0$. If $\omega_i^a<\frac12$, then $(1-\omega_i^a)>\frac 12$, and \eqref{aaa1} yields $v(x_i^-)\leq -\frac12 z_2 \eps R_* + c\eta\eps,$ so that for every $\eta$ small enough $v(x_i^-)<0$, and hence $v(x_i^+)>0$ as claimed. Now, in the same spirit of \eqref{aaa0}, we have
\begin{align}
\label{aab}
v(x)-v(x_i^+) \geq \int_{x_i^+}^x v_x\,\mathrm dx
 \geq (z_1-\eta)\mathscr{L}\bigl((x_i^+,x)\cap\bigl\{|v_x-z_1|\leq \eta\bigr\}\bigr)-
\tilde r,\qquad x\geq x_i^+,\\
\label{aaa222}
v(x^+_i)\leq \int_{x_i^*}^{x_i^+} v_x \,\mathrm dx\leq \sum_{k=2,3}z_k\mathscr{L}\bigl((x_i^*,x_i^+)\cap\bigl\{|v_x-z_k|\leq \eta\bigr\}\bigr)+\tilde r+c\eta\eps R^*,
\end{align}
where 
\beq
\label{tilder}
\tilde r = \int_{\Sigma^\eta} |v_x|\,\mathrm dx,\qquad	\Sigma^\eta=\bigl\{x\in(0,1)\colon |v_x(x)-z_k|>\eta,\, k=1,2,3	\bigr\}.
\eeq
We now give an estimate for $\tilde{r}$. To this aim, we split $\Sigma^\eta$ into
\[
\Sigma_{1}^\eta := \bigl\{x\in \Sigma^\eta\colon |v_x(x)|\leq t_0\bigr\},\qquad \Sigma_{2}^\eta := \bigl\{x\in \Sigma^\eta\colon t_0<|v_x(x)|\bigr\},
\]
where $t_0$ is such that $W(s)\geq \frac{c_1}{2}|s|^p$ for each $s$ satisfying $|s|>t_0$, and its existence is guaranteed by (H2). By \eqref{dacitarepoi 000}, we have 
\beq
\label{prop 0}
\int_{\Sigma^\eta_{1}} |v_x|\,\mathrm dx\leq t_0 \mathscr{L}(\Sigma^\eta)\leq c \frac{\eps^{2}}{\eta^q}.
\eeq
On the other hand, 
\begin{align*}
\frac{c_1}{2}\mathscr{L}(\Sigma^\eta_2)|t_0|^p \leq \int_{\Sigma^\eta_2} W(v_x)\,\mathrm dx\leq C\eps^2,
\end{align*}
implies $\mathscr{L}(\Sigma^\eta_2)\leq c \eps^2.$ Therefore,
\beq
\label{prop 2}
\int_{\Sigma_2^\eta} |v_x(x)| \,\mathrm{d}x \leq c \int_{\Sigma_2^\eta} W^{\frac1p}(v_x(x)) \,\mathrm{d}x \leq c 
\bigl( \mathscr{L}(\Sigma_2^\eta)\bigr) ^\frac{p-1}{p} \biggl (\int_{\Sigma_2^\eta} W(v_x(x)) \,\mathrm{d}x \biggr )^{\frac1p}
\leq c \eps^{2},
\eeq
where we also made use of the fact that $I^{\eps}(v)\leq C$. 
Collecting \eqref{prop 0}--\eqref{prop 2} we thus get
\beq
\label{bound for tilder}
\tilde{r}\leq \int_{\cup_k\Sigma_k^\eta} |v_x| \,\mathrm dx\leq c(\eps^{2}\eta^{-q}+\eps^{2}) \leq c r^*,
\eeq
with $r^*: = \eps \eta $. By \eqref{aab}--\eqref{aaa222}, \eqref{bound for tilder} together with $\max\{\ai,\bi\}\leq R^*\eps$, we obtain
\beq
\label{aabnuovo}
v(x)-v(x_i^+) 
 \geq (z_1-\eta) \mathscr{L}\bigl((x_i^+,x)\cap\bigl\{|v_x-z_1|\leq \eta\bigr\}\bigr)-
cr^*,\qquad x\geq x_i^+,\eeq
and
\beq
\label{bound su vpiu}
0\leq v(x_i^+) \leq c(\eps\eta+r^*+\eps)\leq c\eps.
\eeq
Now let 
$$ 
g_b(x)= v(x_i^+) - cr^* - (|z_1|+\eta)\mathscr{L}\bigl((x_i^+,x)\cap\bigl\{|v_x-z_1|\leq \eta\bigr\}\bigr).
$$
Let $\bar x_i\in[x^+_i,1]$ be the smallest $x$ in $[x^+_i,1]$ such that $g_b(x)\leq 0$, and let us set
$$
\Sigma_i^d:=(x_i^+,\bar x_i)\cap\bigl\{|v_x-z_1|\leq \eta\bigr\}.
$$
The existence of $\bar{x}_i$ is guaranteed by the continuity of $v$ and the fact that $v(1)=0$ together with \eqref{aabnuovo} imply $g_b(1)\leq 0$. Thus, $\Sigma_i^d=\varnothing$ if and only if $g_b(x_i^+)\leq0$. Now, if $\Sigma_i^d\neq\varnothing$, that is $v(x_i^+)> cr^*$, by \eqref{aabnuovo} we have that 
\beq
\label{positivoV}
v(x)\geq g_b(x)> 0,\qquad \text{for ever $x\in (x_i^+,\bar{x}_i)$}.
\eeq 
Now, from \eqref{positivoV} we deduce that
\[
\begin{split}
\int_{\Sigma_i^d} v^2\,\mathrm dx &\geq \int_{\Sigma_i^d} g_b^2(x)\,\mathrm dx 
\geq (|z_1|+\eta)^{-1}\int_{0}^{v(x^+_i)-r^*}x^2\,\mathrm dx\geq (|z_1|+\eta)^{-1} v^3(x_i^+)-c\eps^3\eta.
\end{split}
\]
Here we have used a change of variable $y=g_b(x)$, and, in the last inequality, we exploited \eqref{bound su vpiu}.  {The same lower bound holds trivially if $\Sigma_i^d=\varnothing$, and hence $v(x_i^+)\leq cr^*$.}
Therefore, as $(|z_1|+\eta)^{-1}\geq |z_1|^{-1}-|z_1|^{-2}\eta$, by \eqref{aaa0} we obtain
\beq
\label{stima 1 a}
\int_{\Sigma_i^d} v^2 \,\mathrm dx\geq 3^{-1}|z_1|^{-1}\bigl(z_2\omega_i^a\ai +\omega_i^b\bi z_3\bigr)^3 - cr_b,
\eeq
where $r_b:=\eps^3 \eta$ and we made use of \eqref{bound su vpiu} and the fact that $\max\{\ai,\bi\}\leq \eps R^*$. In the same way, letting  
$$
g_m(x) = v(x_i^-) + cr^* + (|z_1|+\eta)\mathscr{L}\bigl((x,x_i^-)\cap\bigl\{|v_x-z_1|\leq \eta\bigr\}\bigr),
$$
and $\tilde x_i\in[0,x_i^-]$ be the largest $x\in[0,x_i^-]$ such that $g_m(x)\geq 0$, we can define
$$
\Sigma_i^m:=(\tilde x_i,x_i^-)\cap\bigl\{|v_x-z_1|\leq \eta\bigr\}
$$
and deduce
\beq
\label{stima 1 b}
\int_{\Sigma_i^m} v^2\,\mathrm dx \geq 3^{-1}|z_1|^{-1}\bigl(z_2(1-\omega_i^a)\ai +(1-\omega_i^b)\bi z_3\bigr)^3 - cr_b.
\eeq
Define now $\Sigma_i:=\Sigma_i^m\cup\Sigma_i^d$, then by \eqref{stima 1 a}--\eqref{stima 1 b} we deduce
\beq
\label{stima 1 general}
\int_{\Sigma_i} v^2 \,\mathrm dx + cr_b\geq 3^{-1}|z_1|^{-1}\bigl(z_2\omega_i^a\ai +\omega_i^b\bi z_3\bigr)^3 +3^{-1}|z_1|^{-1}\bigl(z_2(1-\omega_i^a)\ai +(1-\omega_i^b)\bi z_3\bigr)^3 .
\eeq
We remark that $v_x(x)\leq z_1+\eta$ for each $x\in\Sigma_i,$ while $v_x(x)> z_1+\eta$ in every $L_j$, $j=1,\dots N_v$. In this way $L_j\cap\Sigma_i  =\varnothing$ for every $i,j=1,\dots N_v$. We now claim that $\Sigma_j\cap\Sigma_i=\varnothing$ for every $j=1,\dots N_v$ with $i\neq j$. Indeed, let $x^-_j,x_j^+$ be such that $D_j=(x^-_j,x_j^+)$, then in case $v(x_j^-)v(x^-_j)\geq0$ we defined $\Sigma_j=\varnothing$ and the conclusion follows trivially. We can hence focus on the case $v(x^-_j)v(x_j^+)<0$, and recall that $\eta<\eta_1$ implies $v(x_j^+)>0$ and $v(x_j^-)<0.$
Now, the construction of the $\Sigma_j^d,\Sigma_j^m$ is such that $\Sigma_j^d\subset(x_j^+,\bar x_j)$ (resp. $\Sigma_j^m\subset(\tilde x_j, x_j^-)$). Furthermore, as stated in \eqref{positivoV}, $v$ is strictly positive in $(x_i^+,\bar x_i)$ (resp. strictly negative in $(\tilde x_i, x_i^-)$). Therefore, $\Sigma_i^d\cap \Sigma_j^m=\varnothing$ for every $i,j=1,\dots,N_v$. Finally, assuming without loss of generality $i<j$, we have that $\Sigma_i^d\subset (x_i^+,\bar x_i)$ and $\Sigma_j^d\subset (x_j^+,\bar x_j)$ with $v(x)>0$ for every $x\in(x_i^+,\bar x_i)\cup(x_j^+,\bar x_j)$. But as $v(x_j^-)<0$ and $x_j^-\in (x_i^+,x_j^+)$, $(x_i^+,\bar x_i)$ and $(x_j^+,\bar x_j)$ have to be disjoint, and so must be $\Sigma_i^d$ and $\Sigma_j^d$. In the same way we prove $\Sigma_i^m\cap \Sigma_j^m=\varnothing$ for every $j=1,\dots,N_v$, $i\neq j$, concluding the proof of the claim.\\

\textbf{Step 3:} We now combine the estimates of Step 1 with the estimates of Step 2, and use an argument as the one in \eqref{minimideps} to deduce the bounds in \eqref{CheStimeSotto} for type I--IV $D-$intervals. In the case of type II/IV $D-$intervals a minimisation over $\omega_i^a,\omega_i^b$ is performed to get lower bounds independent of these parameters.

{We start by noticing that Lemma \ref{corto} leads to \eqref{CheStimeSotto} in the case of type 0 $D-$intervals. Now, we notice that, by \eqref{MMest}, 
\beq
\label{stima sopra per sempre}
\begin{split}
\int_{L_i\cup\Sigma_i}\bigl(\eps^4v_{xx}^2+\eps^{-2}W(v_x)\bigr)\,\mathrm dx &\geq \int_{L_i}\bigl(\eps^4v_{xx}^2+\eps^{-2}W(v_x)\bigr)\,\mathrm dx\\
\geq 2\eps\bigl(H(z_2-\eta) &-H(z_1+\eta)\bigr)+n_i\eps\bigl(H(z_3-\eta) -H(z_2+\eta)\bigr)\\
&\geq \eps(2E_0+n_iE_1)-c\eta\eps,
\end{split}
\eeq
for every $i=1,\dots,N_v$. For the $i$'s where $D_i$ is of type I, we set $\Sigma_i=\varnothing$, and thus, by \eqref{stima 3 sale},
\beq
\label{boundsotto01dacit}
\int_{L_i\cup\Sigma_i}\eps^{-2}v^2\,\mathrm dx+c\eps\eta 
\geq 3^{-1}\eps^{-2}h(\ai,\bi),\qquad\text{if $D_i$ is type I.}
\eeq
If $n_i=0$ or $n_i\geq4$, we can minimise the right hand side of \eqref{stima 1 general} over $\omega_i^a,\omega^b_i\in[0,1]$. This is a convex quadratic function attaining its minimum at $\omega_i^a=\omega^b_i=\frac12$. Thus, by \eqref{stima 1 sale} we get
\beq
\label{idontknowwhat}
\int_{L_i\cup\Sigma_i}\eps^{-2}v^2\,\mathrm dx+c\eps\eta 
\geq \frac1{12\eps^2}\Bigl(z_2^2z_{21}\ai^3+z_3^2z_{31}\bi^3+3\ai\bi z_2z_{31}(\bi z_3+\ai z_2) \Bigr),\qquad\text{if $D_i$ is type II.}
\eeq
In case of type III $D-$intervals, we recall that $\omega_i^b\in\{0,1\}$. Therefore, {we assume without loss of generality that $\omega_i^b=1$ (the case $\omega_i^b=0$ can be treated similarly) and by \eqref{stima 1 general} we deduce}
\beq
\label{se devo proprio}
\begin{split}
cr_b+\int_{\Sigma_i\cup D_i } 
v^2\,\mathrm dx   &\geq 3^{-1}\Bigl( \hat{h}_2(\ai,\bi,\omega_i^a,1)+|z_1|^{-1}\bigl(z_2\ai\omega_i^a+z_3\bi\bigr)^3+|z_1|^{-1}\bigl(z_2\ai(1-\omega_i^a)\bigr)^3\Bigr)
\\
&
\geq 
3^{-1}\Bigl( z_2^2z_{21}\ai^3+z_3^2z_{31}\bi^3 -3\ai^3 f_0\Bigl(\frac{\bi}{\ai}\Bigr)\Bigr),
\end{split}
\eeq
where, $f_0$ is defined by 
\beq
\label{deff0}
f_0(y)=\frac{(y^2 z_{31}z_3-z_2z_{21})^2}{4(z_{21}+yz_{31})}.
\eeq
We remark that the last lower bound in \eqref{se devo proprio} is sharp if and only if $\frac{\bi}{\ai}\leq \sqrt{\frac{z_2z_{21}}{z_3z_{31}}}$. For type IV $D-$intervals, \eqref{stima 1 general}, together with \eqref{stima 3 scende} yield
\beq
\label{ultimaspero}
\begin{split}
cr_b+\int_{\Sigma_i\cup D_i } 
v^2 \,\mathrm dx  &\geq h^*(\ai,\bi,\omega_i^a,\omega_i^b)
\end{split}
\eeq
where,
\[
\begin{split}
h^*(\ai,\bi,\omega_i^a,\omega_i^b) := 3^{-1}|z_1|^{-1}&\bigl(z_2\ai(1-\omega_i^a)+z_3\bi(1-\omega_i^b)\bigr)^3 \\
&+ 3^{-1}\Bigl( \hat h_3(\ai,\bi,\omega_i^a,\omega_i^b)+|z_1|^{-1}\bigl(z_2\ai\omega_i^a+z_3\bi\omega_i^b\bigr)^3	\Bigr).
\end{split}
\]
We claim, that
\beq
\label{minimiIVh}
\min_{(\omega^a,\omega^b)\in[0,1]^2} h^*(\ai,\bi,\omega^a,\omega^b)
\geq 
3^{-1}\Bigl( z_2^2z_{21}\ai^3+z_3^2z_{31}\bi^3 -3\ai^3 f_0\Bigl(\frac{\bi}{\ai}\Bigr)\Bigr),
\eeq
with $f_0$ is as in \eqref{deff0}. Indeed, $h^*$ is a second order polynomial in $\omega_i^a,\omega_i^b$ with negative Hessian determinant. Therefore, the minimum among the $\omega_i^a,\omega_i^b\in[0,1]$ is attained at $\omega_i^a\in\{0,1\}$, or $\omega_i^b\in\{0,1\}$. More precisely, if $\frac\bi\ai\geq \sqrt{\frac{z_2z_{21}}{z_3z_{31}}}$, the minimum is attained at $\omega_i^a\in\{0,1\}$ and a minimization over $\omega_i^b\in [0,1]$ gives \eqref{minimiIVh}. The same lower bound can be achieved when $\frac\bi\ai< \sqrt{\frac{z_2z_{21}}{z_3z_{31}}}$. Indeed, in this case, the minimum is attained at $\omega_i^b\in\{0,1\}$, and, by using the fact that $z_2<z_3$, we can bound from below $h^*(\ai,\bi,\omega_i^a,\omega^b_i)$ with 
$$
3^{-1}\Bigl(\hat h_2(\ai,\bi,\omega_i^a,\omega_i^b)+|z_1|^{-1}\bigl(z_2\ai\omega_i^a+z_3\bi\bigr)^3+|z_1|^{-1}\bigl(z_2\ai(1-\omega_i^a)\bigr)^3\Bigr),
$$
which by \eqref{se devo proprio} yields again to \eqref{minimiIVh}. Therefore, for type III/IV $D-$intervals \eqref{se devo proprio}-\eqref{minimiIVh} imply
\beq
\label{normaL2IIIeIV}
\int_{L_i\cup\Sigma_i}\eps^{-2}v^2\,\mathrm dx+c\eps\eta 
\geq 3^{-1}\Bigl( z_2^2z_{21}\ai^3+z_3^2z_{31}\bi^3 -3\ai^3 f_0\Bigl(\frac{\bi}{\ai}\Bigr)\Bigr),\qquad\text{if $D_i$ is type III/IV.}
\eeq
Finally, by combining \eqref{stima sopra per sempre}, \eqref{boundsotto01dacit}, \eqref{idontknowwhat} and \eqref{normaL2IIIeIV}, and by arguing as in \eqref{minimideps}, we obtain \eqref{CheStimeSotto}.}
\end{proof}

As a corollary of the previous result, we can prove
\begin{thrm}
\label{bound da sotto thm}
Assume {\rm(H1)--(H8)}, and let $C>0$. Then there exists $\eta_1=\eta_1(C)\in(0,\eta_0)$ such that, if $\eta\in(0,\eta_1)$, $\eps\leq\eta^{q+1}$ and $v\in V$ satisfies $I^{\eps}(v)\leq C$, it holds
\beq
\label{tesi inf 000}
\Mint\bigl(\eps^4 v_{xx}^2+\eps^{-2}W(v_x)+\eps^{-2}v^2\bigr)\,\mathrm dx\geq \bigl( A_0\lambda_2^\eta+B_0\lambda_3^\eta\bigr) - c\eta.
\eeq
\end{thrm}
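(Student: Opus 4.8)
The plan is to read off \eqref{tesi inf 000} from Proposition \ref{sotto}: one sums its per-interval lower bounds and then uses (H6)--(H8) to turn each of them into the linear expression $A_0\ai+B_0\bi$. First I would fix $v\in V$ with $I^\eps(v)\leq C$, take $\eta_1=\eta_1(C)\in(0,\eta_0)$ and the Borel sets $\Sigma_i$, $i=1,\dots,N_v$, supplied by Proposition \ref{sotto}, and assume $\eta\in(0,\eta_1)$, $\eps\leq\eta^{q+1}$. Since the sets $F_i:=L_i\cup\Sigma_i$ are pairwise disjoint subsets of $(0,1)$, the whole energy $\Mint\bigl(\eps^4 v_{xx}^2+\eps^{-2}W(v_x)+\eps^{-2}v^2\bigr)\,\mathrm dx$ is bounded below by $\sum_{i=1}^{N_v}\int_{F_i}\bigl(\eps^4 v_{xx}^2+\eps^{-2}W(v_x)+\eps^{-2}v^2\bigr)\,\mathrm dx$, and each summand is in turn bounded below by the relevant line of \eqref{CheStimeSotto}.

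The core of the argument is to verify that, for every type of $D$-interval, the right-hand side of the corresponding line of \eqref{CheStimeSotto} is at least $A_0\ai+B_0\bi-c\eps\eta$. For type $0$ this is exactly what Proposition \ref{sotto} states. For types I, II, III/IV (with $k=6,7,8$ respectively) the point is that $a^3 f_k(b/a)$ extends to a function $P_k(a,b)$, positively homogeneous of degree $3$, on $\{a\geq0,\ b\geq0\}$ (for $k=8$ a homogeneous rational function, continuous away from the origin because $z_{21},z_{31}>1$ keep its denominator positive), so that $\ai f_k^{1/3}(\bi/\ai)=P_k(\ai,\bi)^{1/3}$ is well defined even when $\ai=0$; moreover assumption (Hk), which reads $f_k(y)\geq(A_0+B_0 y)^3$ for all $y\geq0$, becomes --- after multiplying by $a^3$ and setting $y=b/a$, the limit case $a=0$ being covered by (Hk) as $y\to\infty$ --- precisely $P_k(a,b)\geq(A_0 a+B_0 b)^3$ for all $a,b\geq0$. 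Hence $\ai f_k^{1/3}(\bi/\ai)\geq A_0\ai+B_0\bi$, and \eqref{CheStimeSotto} yields $\int_{F_i}\bigl(\eps^4 v_{xx}^2+\eps^{-2}W(v_x)+\eps^{-2}v^2\bigr)\,\mathrm dx\geq A_0\ai+B_0\bi-c\eps\eta$ for all $i=1,\dots,N_v$.

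Summing these inequalities and recalling from Lemma \ref{numero} that $N_v\leq c\eps^{-1}$, the accumulated error is $c\eps\eta\,N_v\leq c\eta$, so
\[
\Mint\bigl(\eps^4 v_{xx}^2+\eps^{-2}W(v_x)+\eps^{-2}v^2\bigr)\,\mathrm dx\ \geq\ A_0\sum_{i=1}^{N_v}\ai+B_0\sum_{i=1}^{N_v}\bi-c\eta .
\]
It remains to replace $\sum_i\ai$ by $\lambda_2^\eta$ and $\sum_i\bi$ by $\lambda_3^\eta$. Because $\eta<\eta_0<\min\{-z_1,z_2\}$ we have $z_k-\eta>z_1+\eta$ for $k=2,3$, so any point with $|v_x-z_k|\leq\eta$ lies in a connected component of $\{v_x>z_1+\eta\}$ which attains at least the level $z_2-\eta$; unless that component touches $0$ or $1$ it contains an $A_+^\eta$- and an $A_-^\eta$-transition layer, hence equals one of the $L_i$, and inside $L_i$ the set $\{|v_x-z_k|\leq\eta\}$ is contained in $D_i$ (on the two transition layers making up $L_i\setminus D_i$ one has $v_x\in(z_1+\eta,z_2-\eta)$). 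By \eqref{sommediaibi} the defects $\lambda_2^\eta-\sum_i\ai$ and $\lambda_3^\eta-\sum_i\bi$ are thus the contributions of the at most two components meeting the endpoints of $(0,1)$; one disposes of them by running the very same chain of estimates on those ``head/tail'' intervals, using the anchor $v(0)=0$ (resp. $v(1)=0$) in place of the missing $A^\eta$-transition layer, which costs an extra error $\leq c\eps\eta$ and upgrades $\sum_i\ai,\sum_i\bi$ to $\lambda_2^\eta,\lambda_3^\eta$. Inserting this into the last display gives \eqref{tesi inf 000}. Given Proposition \ref{sotto}, I expect this last point --- ensuring the part of $\lambda_2^\eta,\lambda_3^\eta$ not carried by the $D_i$ is itself charged with enough energy --- to be the only real obstacle; the rest is a mechanical combination of Proposition \ref{sotto}, Lemma \ref{numero} and the cubic inequalities (H6)--(H8).
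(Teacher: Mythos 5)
Your proposal is essentially correct and follows the same route as the paper: sum the per-interval bounds from Proposition \ref{sotto}, use (H6)--(H8) to convert each right-hand side into $A_0\ai+B_0\bi$, control the accumulated error via Lemma \ref{numero}, and then account for the two boundary pieces. The part you spell out most carefully --- that (Hk), i.e.\ $f_k(y)\geq(A_0+B_0y)^3$ on $y\geq0$, after multiplication by $\ai^3$ and cube-rooting yields $\ai f_k^{1/3}(\bi/\ai)\geq A_0\ai+B_0\bi$ --- is exactly what the paper compresses into the single phrase ``Thanks to Proposition \ref{sotto} and (H6)--(H8)''; your explicit treatment, including the degenerate case $\ai=0$, is a useful amplification.

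The place where your argument is vaguer than the paper's is the treatment of $D_0=(0,y_1^-)$ and $D_{N_v+1}=(y_{N_v}^+,1)$. You say it ``costs an extra error $\leq c\eps\eta$'' and that one should ``run the very same chain of estimates'' anchored at $v(0)=0$. But those intervals are not $D$-intervals in the sense of Definition \ref{defDC}, so the full machinery of Proposition \ref{sotto} does not literally apply; and if, say, $\alpha_0$ is of order one, the quantity $A_0\alpha_0+B_0\beta_0$ to be recovered is not a small error at all. The paper instead splits into two cases: if $\mathscr{L}(\{x\in D_0\colon |v_x-z_k|\leq\eta\})>R^*\eps$ for some $k\in\{2,3\}$, it runs the Lemma \ref{corto} $L^2$-argument with the anchor $v(0)=0$ and gets $\int_{D_0}\geq A_0\alpha_0+B_0\beta_0$ with no error; otherwise it bounds $A_0\alpha_0+B_0\beta_0\leq(A_0+B_0)R^*\eps$ trivially. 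Either way the loss is $O(\eps)\leq O(\eta)$, which still closes the estimate, but the correct statement of the loss is $O(\eps)$, not $O(\eps\eta)$. Your identification of which parts of $\lambda_2^\eta,\lambda_3^\eta$ live outside $\bigcup_i D_i$ (namely, only the two boundary-touching components) is correct and matches \eqref{sommediaibi}.
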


\begin{proof}
Thanks to Proposition \ref{sotto} and (H6)--(H8) we have
$$
\int_{ L_i\cup\Sigma_i} \bigl(\eps^4v_{xx}^2+\eps^{-2}W(v_x)+\eps^{-2}v^2\bigr)\,\mathrm dx\geq  A_0\ai+B_0\bi - c\eta \eps,
$$
for every $i=1,\dots,N_v$. It just remains to provide an estimate for the intervals $D_0:=(0,y_1^-)$ and $D_{{N_v}+1}:=(y_{N_v}^+,1)$. We deal with the first case, as the second can be treated similarly. If $\mathscr{L}(\{x\in D_0\colon |v_x(x)-z_k|\leq\eta\})>R^*\eps$ for some $k=2,3$, {then, by arguing as in the proof of Lemma \ref{corto} (cf. \eqref{circle}--\eqref{chestima}) we deduce}
$$
\int_{D_0}\bigl(\eps^4v_{xx}^2+\eps^{-2}W(v_x)+\eps^{-2}v^2\bigr)\,\mathrm dx\geq A_0\alpha_0+B_0\beta_0.
$$
On the other hand, if $\mathscr{L}(\{x\in D_0\colon |v_x(x)-z_k|\leq\eta\})\leq R^*\eps$ for $k=1,2$, then
\beq
\label{D0 DN}
\int_{D_0}\bigl(\eps^4v_{xx}^2+\eps^{-2}W(v_x)+\eps^{-2}v^2\bigr)\,\mathrm dx\geq A_0\alpha_0+B_0\beta_0 - (A_0+B_0)R^*\eps.
\eeq
Therefore, recalling that $N_v\leq c\eps^{-1}$ (see Lemma \ref{numero})
$$
\Mint\bigl(\eps^4v_{xx}^2+\eps^{-2}W(v_x)+\eps^{-2}v^2\bigr)\,\mathrm dx\geq  A_0\sum_{i=0}^{{N_v}+1}\ai+B_0\sum_{i=0}^{{N_v}+1}\bi - c\eta,
$$
which, by the definition of the $\ai$'s, $\bi$'s, and of $\lambda^\eta_2,\lambda_3^\eta$ (see \eqref{sommediaibi}) coincides with \eqref{tesi inf 000}.
\end{proof}

\section{The second $\Gamma$--limit}
\label{section 5}
In this section we prove the $\Gamma-$limit for $I^\eps$, that is a second $\Gamma-$limit for $\EE^\eps$, as stated in Theorem \ref{main Thm}. The first step is to prove compactness for the family of energy functionals $I^\eps$.
\begin{prpstn}
\label{compattezza}
Assume {\rm(H1)--(H5)}. Let $C>0$, $\eps_j\downarrow0$ and $(\uj,\nu_j)\in L^2(0,1)\times L^\infty_{w^*}(0,1;\mathcal{M})$ be such that 
\beq
\label{comp 0}
I^{\ej}(\uj,\nu_j) \leq C,\qquad \forall j \in \mathbb{{N}}.
\eeq
Then, up to a subsequence, $(\uj,\nu_j)$ converges to $(u,\nu)$ in $L^2(0,1)\times L^\infty_{w^*}(0,1;\mathcal{M})$. Furthermore, $u=0$ and $\nu\in \mathrm{GYM}^\infty(u)$ satisfies
\beq
\label{comp 02}
 \nu_x = \lambda_1(x)\delta_{z_1}+\lambda_{2}(x)\delta_{z_2}+\lambda_{3}(x)\delta_{z_3},\qquad \text{ a.e. $x\in(0,1)$},
\eeq
where $\lambda_1,\lambda_{2},\lambda_{3}\in L^\infty(0,1;[0,1])$ are such that
\beq
\label{comp 01}
\lambda_{1}(x)+\lambda_{2}(x)+\lambda_3(x)=1,\qquad\text{and}\qquad \sum_{k=1}^3z_k\lambda_k(x)=0,
\eeq
for a.e. $x\in(0,1)$.
\end{prpstn}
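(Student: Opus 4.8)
The plan is to extract the a priori bounds contained in \eqref{comp 0}, use weak compactness in $W^{1,p}_0(0,1)$ to identify $u$, and then argue as in the lower semicontinuity proof of Section \ref{section 2} to determine the structure of $\nu$.

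First I would note that finiteness of $I^{\eps_j}(u_j,\nu_j)$ forces $u_j\in V$ and $(\nu_j)_x=\delta_{u_{j,x}(x)}$ a.e., and that \eqref{comp 0} reads
\[
\Mint\bigl(\eps_j^6 u_{j,xx}^2+W(u_{j,x})+u_j^2\bigr)\,\mathrm dx\leq C\eps_j^2 .
\]
In particular $\|u_j\|_{L^2(0,1)}^2\leq C\eps_j^2\to0$, and by (H2)
\[
c_1\Mint|u_{j,x}|^p\,\mathrm dx\leq \Mint W(u_{j,x})\,\mathrm dx+c_2\leq C\eps_j^2+c_2 ,
\]
so $(u_j)$ is bounded in $W^{1,p}_0(0,1)$. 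Passing to a subsequence (not relabelled), $u_j\rightharpoonup u$ weakly in $W^{1,p}_0(0,1)$ and, by the compact embedding $W^{1,p}_0(0,1)\hookrightarrow C([0,1])$, strongly in $L^2(0,1)$; since $u_j\to0$ in $L^2(0,1)$ we conclude $u=0$.

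Next I would proceed exactly as in Section \ref{section 2}: since $\Mint\langle(\nu_j)_x,|\cdot|^p\rangle\,\mathrm dx=\Mint|u_{j,x}|^p\,\mathrm dx$ is bounded, after a further subsequence $\delta_{u_{j,x}}\to\nu$ in $L^\infty_{w^*}(0,1;\mathcal M)$, by \cite[Thm.~3.6]{Sychev} the limit $\nu_x$ is a probability measure for a.e.\ $x$, and \cite[Prop.~4.5]{Sychev} gives $\nu\in\mathrm{GYM}^p(u)=\mathrm{GYM}^p(0)$, so the barycentre satisfies $\bar\nu_x=u_x(x)=0$ a.e. Because $W$ is continuous and nonnegative, \cite[Thm.~6.11]{Pedregal} yields
\[
0\leq\Mint\langle\nu_x,W\rangle\,\mathrm dx\leq\liminf_j\Mint W(u_{j,x})\,\mathrm dx=0 ,
\]
hence $\langle\nu_x,W\rangle=0$ and $\supp\nu_x\subset\{W=0\}=\mathcal Z$ for a.e.\ $x$; as $\mathcal Z$ is a fixed compact set this also shows $\nu\in\mathrm{GYM}^\infty(0)$. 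By (H4) we then have $\nu_x=\lambda_1(x)\delta_{z_1}+\lambda_2(x)\delta_{z_2}+\lambda_3(x)\delta_{z_3}$ with $\lambda_k(x):=\nu_x(\{z_k\})\in[0,1]$; writing $\lambda_k(x)=\langle\nu_x,\varphi_k\rangle$ for a continuous tent function $\varphi_k$ with $\varphi_k(z_k)=1$ and support disjoint from the other two wells shows that each $\lambda_k$ is measurable, so $\lambda_k\in L^\infty(0,1;[0,1])$. Finally \eqref{comp 01} is nothing but the statement that $\nu_x$ is a probability measure ($\sum_k\lambda_k(x)=1$) with vanishing barycentre ($\sum_k z_k\lambda_k(x)=\bar\nu_x=0$).

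I do not expect any step to be genuinely hard; the point requiring care is the localization $\supp\nu_x\subset\mathcal Z$, which uses the $L^p$-bound (both to extract a Young measure without concentration and to know the limiting measures are probabilities) together with the lower semicontinuity of $\nu\mapsto\Mint\langle\nu_x,W\rangle\,\mathrm dx$ for the continuous nonnegative integrand $W$. Everything else is routine weak compactness plus the Young measure machinery already invoked in Section \ref{section 2}.
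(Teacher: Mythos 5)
Your proof is correct. It reaches the same conclusions as the paper's proof by essentially the same overall structure (extract the $L^2$ decay to get $u=0$, get the $L^p$ bound on gradients from (H2), pass to a Young measure limit, identify the support, deduce the constraints from the probability and barycentre properties), but you use a different mechanism to localize the support. The paper's proof uses the coercivity hypothesis (H5): it defines $\Sigma_j^\eta=\{x:|u_{j,x}(x)-z_k|>\eta,\ k=1,2,3\}$, deduces $\mathscr L(\Sigma_j^\eta)\le c\eps_j^2/\eta^q$ from $\int W(u_{j,x})\le C\eps_j^2$, reads this as convergence in measure of $u_{j,x}$ to $\mathcal Z$, and then cites the fundamental theorem of Young measures (\cite{BallYM}) to conclude $\supp\nu_x\subset\mathcal Z$. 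You instead use lower semicontinuity of $\nu\mapsto\int\langle\nu_x,W\rangle\,\mathrm dx$ for the continuous nonnegative integrand $W$, together with $\int W(u_{j,x})\le C\eps_j^2\to0$, to get $\langle\nu_x,W\rangle=0$ a.e., hence $\supp\nu_x\subset\{W=0\}=\mathcal Z$. Your route is slightly more economical: it needs only (H1)--(H3) for this step (though (H5) is of course in the hypotheses) and avoids the auxiliary convergence-in-measure statement, at the small cost of invoking a lower semicontinuity theorem for Young measure functionals rather than the more elementary support characterization. The invocations of \cite[Thm.~3.6]{Sychev} and \cite[Prop.~4.5]{Sychev} are not used in the paper's proof of this proposition but do no harm; and your final step recovers the paper's use of \cite[Thm.~8.7]{Pedregal} for the barycentre constraint. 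Everything checks out.
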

\begin{proof}
We first notice that \eqref{comp 0} implies strong convergence of $\uj$ to $u=0$ in $L^2(0,1)$. Furthermore, as $W(s) \geq c_1 |s|^p-c_2$, we also have
\beq
\label{comp 1}
\|\ux\|_{L^p}\leq c,
\eeq
and, therefore, up to a subsequence $u_j \to 0$, weakly in $W^{1,p}_0(0,1)$. In fact, \eqref{comp 1} also implies that, up to a further non-relabelled subsequence, 
$\ux$ generates a gradient Young measure $\nu_x$, weak$*$ limit of $\nu_j$ in $L^\infty_{w^*}(0,1;\mathcal{M})$. Defined $\Sigma_j^\eta$ as
\beq
\label{nuovoSigma}
\Sigma_j^\eta:=\bigl\{x\in(0,1)\colon |\ux(x)-z_k|>\eta,\, k=1,2,3	\bigr\},
\eeq
for some $\eta\in (0,\eta_0)$, by (H5) we have
$$
C\eps_j^2\geq\uint W(\ux)\,\mathrm dx\geq\int_{\Sigma_j^\alpha} W(\ux)\,\mathrm dx\geq c_0\eta^q\mathscr{L}(\Sigma_j^\eta).
$$
This implies
\beq
\label{comp 2}
\mathscr{L}(\Sigma_j^\eta) \leq c\frac{\eps_j^2}{\eta^q},
\eeq
which is convergence in measure of $\ux$ to $\mathcal{Z}$. Therefore, $\nu_x$ is a probability measure supported on $\mathcal{Z}$ (see e.g., \cite{BallYM}), and hence $\nu_x = \lambda_{1}(x)\delta_{z_1}+\lambda_2(x)\delta_{z_2}+\lambda_{3}(x)\delta_{z_3}$ for a.e. $x\in(0,1)$, as claimed. The fact that $\nu$ is a probability measure implies the first identity in \eqref{comp 01}. By \cite[Thm. 8.7]{Pedregal} we also know that $\nu$ is the gradient Young measure related to $u=0$, and therefore the average of $\nu$ must be $0$, that is $\sum_{k=1}^3\lambda_k(x)z_k =0$ for a.e. $x\in(0,1)$, which is the last identity in \eqref{comp 01}. 
\end{proof}

Given a sequence $u_j\in  W^{1,p}_0(0,1)$ and $\eta>0$, let us define
$$
\lambda_{k,j}^{\eta}:=  \mathscr{L}\bigl(\bigl\{x\in (0,1)\colon |u_{j,x}(x)-z_k|\leq\eta\bigr\}\bigr),\qquad k=1,2,3.
$$
The following result is used below:
\begin{lmm}
\label{limite la}
Assume {\rm(H1)--(H5)}. Let $C>0$, $\eta\in(0,\eta_0)$, $\eps_j\downarrow0$ and $(\uj,\nu_j)\in L^2(0,1)\times L^\infty_{w^*}(0,1;\mathcal{M})$ be a sequence converging to $(0,\nu)$ in $L^2(0,1)\times L_{w^*}^\infty(0,1;\mathcal{M})$ such that $I^{\ej}(\uj)\leq C$ for each $j$. Then $\nu$ satisfies \eqref{comp 02}--\eqref{comp 01} and 
\beq
\label{1limitedil}
\lim_j \lambda_{k,j}^{\eta} = \Mint \lambda_k\,\mathrm dx.
\eeq
\end{lmm}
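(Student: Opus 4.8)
The plan is to read off the structure of $\nu$ directly from Proposition \ref{compattezza}, and then to prove \eqref{1limitedil} by sandwiching the characteristic function $\mathbf 1_{[z_k-\eta,z_k+\eta]}$ between two continuous, compactly supported functions and exploiting the weak$*$ convergence $\delta_{u_{j,x}}\rightharpoonup^*\nu$.

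First I would note that $I^{\eps_j}(u_j)\le C<\infty$ forces $u_j\in V$ and $\nu_{j,x}=\delta_{u_{j,x}(x)}$ for a.e.\ $x$, so the hypothesis $(u_j,\nu_j)\to(0,\nu)$ in $L^2(0,1)\times L^\infty_{w^*}(0,1;\mathcal M)$ amounts to $\delta_{u_{j,x}}\rightharpoonup^*\nu$. Applying Proposition \ref{compattezza} along an arbitrary subsequence and using uniqueness of the weak$*$ limit, $\nu$ must be of the form \eqref{comp 02} with $\lambda_1,\lambda_2,\lambda_3\in L^\infty(0,1;[0,1])$ satisfying \eqref{comp 01}; this is the first assertion of the lemma.

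For \eqref{1limitedil}, fix $k\in\{1,2,3\}$ and $\eta\in(0,\eta_0)$. Since, by (H5), $\eta_0<\min\{1,-z_1,z_2,(z_3-z_2)/2\}$, the closed intervals $[z_i-\eta,z_i+\eta]$, $i=1,2,3$, are pairwise disjoint; in particular $z_i\notin[z_k-\eta,z_k+\eta]$ whenever $i\neq k$. Hence there exist continuous functions $\phi_k^-,\phi_k^+\colon\R\to[0,1]$ with compact support such that
$$\phi_k^-\le\mathbf 1_{[z_k-\eta,z_k+\eta]}\le\phi_k^+,\qquad \phi_k^\pm(z_k)=1,\qquad \phi_k^\pm(z_i)=0\ \ (i\neq k).$$
Writing $\nu_x=\sum_{i=1}^3\lambda_i(x)\delta_{z_i}$, one gets $\langle\nu_x,\phi_k^\pm\rangle=\lambda_k(x)$ for a.e.\ $x\in(0,1)$, for both signs. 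On the other hand, directly from the definition of $\lambda_{k,j}^\eta$,
$$\int_0^1\langle\delta_{u_{j,x}},\phi_k^-\rangle\,\mathrm dx\ \le\ \lambda_{k,j}^\eta\ =\ \int_0^1\mathbf 1_{[z_k-\eta,z_k+\eta]}(u_{j,x}(x))\,\mathrm dx\ \le\ \int_0^1\langle\delta_{u_{j,x}},\phi_k^+\rangle\,\mathrm dx.$$
Since $\phi_k^\pm\in C_0(\R)$, the $x$-independent map $(x,s)\mapsto\phi_k^\pm(s)$ lies in $L^1(0,1;C_0(\R))$, so letting $j\to\infty$ and using $\delta_{u_{j,x}}\rightharpoonup^*\nu$ shows that both the left- and the right-hand side above converge to $\int_0^1\langle\nu_x,\phi_k^\pm\rangle\,\mathrm dx=\int_0^1\lambda_k\,\mathrm dx$. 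A squeeze then yields $\lim_j\lambda_{k,j}^\eta=\int_0^1\lambda_k\,\mathrm dx$, which is \eqref{1limitedil}.

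No step here is genuinely hard: the only subtlety is that $\mathbf 1_{[z_k-\eta,z_k+\eta]}$ fails to be continuous, which is precisely what the sandwich by $\phi_k^\pm$ remedies, while the disjointness of the $\eta$-neighbourhoods of the wells (ensured by $\eta<\eta_0$) is what makes the two outer integrals collapse to the common value $\int_0^1\lambda_k$. Alternatively, one could invoke the convergence in measure of $u_{j,x}$ to $\mathcal Z$ from \eqref{comp 2} to see that $\lambda_{1,j}^\eta+\lambda_{2,j}^\eta+\lambda_{3,j}^\eta\to1$ and then argue with $x$-independent continuous test functions, but the squeezing argument above is shorter and self-contained.
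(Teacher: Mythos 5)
Your proof is correct, and it follows the same basic template as the paper: invoke Proposition~\ref{compattezza} to identify the structure of $\nu$, and then pass to the limit in $\lambda_{k,j}^\eta$ by testing the weak$*$ convergence $\delta_{u_{j,x}}\rightharpoonup^*\nu$ against $x$--independent continuous functions that pick out the well $z_k$ and vanish at the other two wells (possible because $\eta<\eta_0$ keeps the $\eta$--neighbourhoods of the wells pairwise disjoint by (H5)). The only difference is a small technical one: the paper uses a single cutoff $f_k$ equal to $1$ on $[z_k-\eta,z_k+\eta]$ and supported in $[z_k-\eta_0,z_k+\eta_0]$, then writes $\int_0^1 f_k(u_{j,x})\,\mathrm dx=\lambda_{k,j}^\eta+r$ and bounds the error $r$ by $\mathscr L(\Sigma_j^\eta)\le c\eps_j^2\eta^{-q}$ using \eqref{comp 2}; you instead sandwich $\mathbf 1_{[z_k-\eta,z_k+\eta]}$ between two such cutoffs $\phi_k^-\le\mathbf 1\le\phi_k^+$, both of which integrate against $\nu_x$ to the same value $\lambda_k(x)$, and squeeze. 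Your variant slightly streamlines the argument, since it dispenses with the appeal to the measure estimate \eqref{comp 2} in this step, but it is otherwise the same strategy.
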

\begin{proof}
The fact that $\nu$ satisfies \eqref{comp 02}--\eqref{comp 01} follows directly from Proposition \ref{compattezza}. We just need to prove \eqref{1limitedil}. Let us consider a continuous function $f_k\colon \R\to[0,1]$, which is equal to $1$ for those $s$ such that $|s-z_k|\leq\eta$, and equal to $0$ for $|s-z_k|\geq \eta_0$. We have
$$
\uint\lambda_k\,\mathrm dx = \uint\langle \nu , f_k \rangle \,\mathrm dx= \lim_j\uint\langle \nu_j,f_k\rangle \,\mathrm dx= \lim_j\uint f_k(\ux)\,\mathrm dx.
$$
Now, we notice that, as $\eta_0<\frac{|z_k-z_h|}{2}$ for each $h\neq k\in\{1,2,3\}$ (cf. (H5)),
$$
\uint f_k(\ux) \,\mathrm dx= \lambda_{k}^{\eta}(u_j) + r,
$$
{where $
0<r \leq \mathscr{L}\bigl( \Sigma_j^\eta\bigr)\leq c \ej^2\eta^{-q}.$ Here, $\Sigma_j^\eta$ is as in \eqref{nuovoSigma}, and was estimated by means of \eqref{comp 2}.} Therefore, collecting all previous identities we finally get
$$
\lim_j\lambda_k^{\eta}(u_j) = \uint\lambda_k\,\mathrm dx,
$$
which concludes the proof.
\end{proof}

\subsection{Proof of Theorem \ref{main Thm}}
{By \cite[Remark 1.29]{Braides} we just need to show the $\Gamma-\limsup$ inequality for every $\nu\in X$, where $X$ is the set containing all $\nu\in \mathrm{GYM}^\infty(0)$, with $\supp\nu\subset\mathcal Z,$ and such that $\lambda_2:=\nu(z_2) \colon(0,1)\to\{0,z_{21}^{-1}\}$ is constant on every sub-interval $(x_i, x_{i+1})$, $i=1,\dots,n-1$, for some partition $0= x_1\leq\dots\leq x_n = 1$ of $(0,1)$ and some $n\in\mathbb{N}$. Indeed $X$ is dense with respect to the weak$*$ topology of $L^\infty_{w^*}(0,1;\mathcal{M})$ in the set containing all $\nu\in\mathrm{GYM}^\infty(0)$ such that $\supp\nu\subset\mathcal Z.$ This is because the space of piecewise constant functions in $L^{\infty}(0,1;\{0,z_{21}^{-1}\})$ is weak$*$ dense in the space of piecewise constant functions in $L^\infty(0,1;(0,z_{21}^{-1}))$ (cf. \cite[Pb. 1, Sec. 8.6]{Evans}), which is weakly$*$ dense in $L^{\infty}(0,1;(0,z_{21}^{-1}))$. On $X$ the $\Gamma-\limsup$ follows directly by Proposition \ref{upper bnd} and Lemma \ref{limite la}.} Therefore we just need to prove the $\Gamma-\liminf$ inequality. In order to do that, we need to consider a generic sequence $\ej$ converging to $0$, a sequence $\uj\in V$ converging strongly in $L^2(0,1)$ to $u\in L^2(0,1)$ and a sequence of parametrized measures $\nu_j\in L^\infty_{w^*}(0,1;\mathcal M)$ converging weakly$*$ to $\nu$ in $L^\infty_{w^*}(0,1;\mathcal M)$. If $\liminf_j I^{\ej}(\uj) =\infty$, the liminf inequality is trivial. Otherwise, up to a subsequence we can assume the existence of $C>0$, independent of $\eps_j$, such that
$$
I^{\ej}(\uj)\leq C,\qquad \forall j\in\mathbb{{N}}.
$$
In this case, Proposition \ref{compattezza} implies that $u=0$ and that $\nu\in \mathrm{GYM}^\infty(0)$ satisfies \eqref{comp 02}--\eqref{comp 01}. 
Now, Theorem \ref{bound da sotto thm} guarantees the existence of $\eta_1>0$ such that, fixed $\eta\in (0,\eta_1)$,
$$
\Mint\bigl(\eps_j^4 u_{j,xx}^2+\eps_j^{-2}W(u_{j,x})+\eps_j^{-2}\uj^2\bigr)\,\mathrm dx\geq \bigl( A_0\lambda_{2,j}^\eta+B_0\lambda_{3,j}^\eta\bigr) - c\eta,
$$
for all $\eps_j<\eta^{q+1}$. Now, by taking the $\liminf$ on both sides, and recalling Lemma \ref{limite la}, we deduce 
$$
\liminf_j\Mint\bigl(\eps_j^4 u_{j,xx}^2+\eps_j^{-2}W(u_{j,x})+\eps_j^{-2}\uj^2\bigr)\,\mathrm dx\geq A_0 \Mint\lambda_2\,\mathrm dx+B_0\Mint\lambda_3\,\mathrm dx - c\eta.
$$
The arbitrariness of $\eta$ yields to the desired $\Gamma-\liminf$ inequality.

\section{Selecting Minimizing sequences without $\Gamma-$convergence}
\label{Che bound}
In this section we prove Theorem \ref{main Thm 2}. In order to do this we strongly rely on the estimates of Section \ref{section 4}, to which we refer the reader for the notation. We start with the following theorem: 
\begin{thrm}
\label{ThmScaling}
Assume {\rm(H1)--(H5)} and $z_3\leq 3|z_1|$. Then, there exist $\eps_1>0$ and $\xi>0$ such that, if 
$\eps<\eps_1$ 
\beq
\label{tesi inf}
\inf_{v\in V}\Mint\bigl(\eps^4 v_{xx}^2+\eps^{-2}W(v_x)+\eps^{-2}v^2\bigr)\,\mathrm dx= A_0 z_{21}^{-1} + o(\eps^\xi).
\eeq
Furthermore, every minimizer $u$ of $I^\eps$ satisfies
\beq
\label{tesi inf 2}
\mathscr{L}\bigl(\{x\in(0,1)\colon |u_x(x)-z_3|\leq \eps^\frac1{q+1} \}\bigr) \leq c\eps^\xi.
\eeq
\end{thrm}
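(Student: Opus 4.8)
The plan is to sandwich $\inf_{v\in V} I^\eps(v)$ between the upper bound of Proposition~\ref{upper bnd} and a lower bound extracted from Proposition~\ref{sotto}, the point being that the single hypothesis $z_3\le 3|z_1|$ will do the work that (H6)--(H8) do in Theorem~\ref{bound da sotto thm}. For the upper bound I would apply Proposition~\ref{upper bnd} with $n=2$ and the trivial partition $\{0,1\}$, obtaining for every $\eps\le\eps_0$ a function $u\in V$ with
$$ I^\eps(u)=\Mint\bigl(\eps^4 u_{xx}^2+\eps^{-2}W(u_x)+\eps^{-2}u^2\bigr)\,\mathrm dx\le A_0 z_{21}^{-1}+c\eps^\zeta, $$
hence $\inf_V I^\eps\le A_0 z_{21}^{-1}+c\eps^\zeta$.

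For the lower bound, fix $C:=A_0 z_{21}^{-1}+1$; for $v\in V$ with $I^\eps(v)>C$ there is nothing to prove, so assume $I^\eps(v)\le C$, pick $\eta\in(0,\eta_1(C))$ with $\eta_1$ as in Proposition~\ref{sotto}, and impose $\eps\le\eta^{q+1}$. The crucial step, and the one I expect to be the main obstacle, is a relaxed form of (H6)--(H8): that $z_3\le 3|z_1|$ (i.e.\ $z_{31}\le4$) guarantees the existence of $\bar B$ with $\frac{z_{31}}{z_{21}}A_0<\bar B\le B_0$ such that $f_j(y)\ge(A_0+\bar B y)^3$ for all $y\ge0$ and $j=6,7,8$. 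The interval $(\frac{z_{31}}{z_{21}}A_0,B_0]$ is nonempty by $A_0<\frac{z_{21}}{z_{31}}B_0$ (recorded after Theorem~\ref{main Thm}). One then checks that $f_j(0)\ge A_0^3$ --- for $j=6$ using $z_{21}=1+z_2/|z_1|<4$, for $j=7,8$ directly since $E_1\ge0$ --- and that the leading coefficient of $f_j$ is $\ge B_0^3\ge\bar B^3$ --- for $j=6$ this is exactly where $z_{31}\le4$ enters, for $j=7$ it is automatic, and for $j=8$ the leading coefficient equals $B_0^3>\bar B^3$ --- so that $f_j(y)-(A_0+\bar By)^3$ is a polynomial of degree $\le3$ that is nonnegative at both ends of $[0,\infty)$. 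The margin gained by taking $\bar B<B_0$, by $z_{31}\le4$, and by $E_0,E_1>0$ should suffice to exclude an interior sign change, reducing the whole matter to verifying finitely many one-variable polynomial inequalities; this computational step is where the real work lies.

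Granting the relaxed (H6)--(H8), Proposition~\ref{sotto} gives, for each $i=1,\dots,N_v$,
$$ \int_{L_i\cup\Sigma_i}\bigl(\eps^4 v_{xx}^2+\eps^{-2}W(v_x)+\eps^{-2}v^2\bigr)\,\mathrm dx\ge A_0\alpha_i+\bar B\beta_i-c\eps\eta, $$
directly from the type-$0$ bound in \eqref{CheStimeSotto} (since $\bar B\le B_0$) and from $\alpha_i f_j^{1/3}(\beta_i/\alpha_i)\ge A_0\alpha_i+\bar B\beta_i$ for types I--IV. Summing over $i$, treating the boundary intervals $(0,y_1^-)$ and $(y_{N_v}^+,1)$ as in the proof of Theorem~\ref{bound da sotto thm}, and using $N_v\le c\eps^{-1}$ and \eqref{sommediaibi}, one obtains $I^\eps(v)\ge A_0\lambda_2^\eta+\bar B\lambda_3^\eta-c\eta$. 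Next I would use $\int_0^1 v_x\,\mathrm dx=0$ (valid since $v\in W^{1,p}_0$): splitting this over $\{|v_x-z_k|\le\eta\}$, $k=1,2,3$, and $\Sigma^\eta$, and invoking $\mathscr{L}(\Sigma^\eta)\le c\eps^2\eta^{-q}\le c\eps\eta$ together with \eqref{bound for tilder}, yields $\sum_k z_k\lambda_k^\eta=O(\eta)$ and $\sum_k\lambda_k^\eta=1+O(\eta)$, hence $z_{21}\lambda_2^\eta+z_{31}\lambda_3^\eta=1+O(\eta)$. Substituting $\lambda_2^\eta=z_{21}^{-1}(1-z_{31}\lambda_3^\eta)+O(\eta)$ then gives
$$ I^\eps(v)\ge A_0 z_{21}^{-1}+\delta\,\lambda_3^\eta-c\eta,\qquad\delta:=\bar B-\tfrac{z_{31}}{z_{21}}A_0>0. $$

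Finally I would optimise in $\eta$: taking $\eta=\eps^{1/(q+1)}$ (admissible once $\eps<\eps_1:=\eta_1(C)^{q+1}$, since then $\eps=\eta^{q+1}$) gives $I^\eps(v)\ge A_0 z_{21}^{-1}-c\eps^{1/(q+1)}$ for every $v\in V$, which together with the upper bound proves \eqref{tesi inf} for any $\xi\in(0,\min\{\zeta,1/(q+1)\})$. If moreover $u$ minimises $I^\eps$, then $I^\eps(u)=\inf_V I^\eps\le A_0 z_{21}^{-1}+c\eps^\zeta$, so the last display applied to $u$ forces $\delta\,\lambda_3^\eta(u)\le c(\eps^\zeta+\eps^{1/(q+1)})\le c\eps^\xi$ with $\eta=\eps^{1/(q+1)}$, which is precisely \eqref{tesi inf 2}.
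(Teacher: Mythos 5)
Your proposal follows the same overall architecture as the paper's proof: upper bound from Proposition~\ref{upper bnd}, lower bound from Proposition~\ref{sotto} summed over $F_i=L_i\cup\Sigma_i$ plus the boundary intervals, the mass/mean constraints $\sum_k\lambda_k^\eta=1+O(\eta)$ and $\sum_k z_k\lambda_k^\eta=O(\eta)$, then $\eta=\eps^{1/(q+1)}$, and finally extracting~\eqref{tesi inf 2} by reading off the coefficient of $\lambda_3^\eta$ in the resulting sandwich. The constant you call $\delta$ plays the role of the paper's $A_0(1-Kz_{21}/z_{31})$, so the endgame is the same.

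The genuine gap is your treatment of the core inequalities $f_j(y)\ge(A_0+\bar B y)^3$, $j=6,7,8$, under $z_{31}\le4$. You observe that $f_j(0)\ge A_0^3$ and that the leading coefficient of $f_j$ is $\ge\bar B^3$, and then assert that the ``margin'' should prevent an interior sign change. That is not a proof: a cubic that is nonnegative at $0$ with nonnegative leading coefficient can easily dip below zero on $(0,\infty)$, and $f_8$ is not even a polynomial, so ``leading coefficient'' must be interpreted asymptotically and the argument is even shakier there. The paper avoids this entirely by making a concrete choice of slope, namely $\bar B=KA_0$ with
$$K:=\frac{z_{31}}{z_{21}}\Bigl(\tfrac{E_0+E_1}{E_0}\Bigr)^{2/3},$$
which sits strictly between $\tfrac{z_{31}}{z_{21}}A_0$ and $B_0$. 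With this choice $(A_0+KA_0y)^3=A_0^3(1+Ky)^3$ and $A_0^3=\tfrac94E_0^2z_2^2z_{21}$, so the right-hand side becomes a cubic in $y$ with coefficients that can be compared \emph{termwise} with those of $f_6,f_7$: for instance the $y^3$ coefficient of $A_0^3(1+Ky)^3$ equals $\tfrac94(E_0+E_1)^2 z_2^2 z_{31}^3/z_{21}^2$, which is dominated by the $y^3$ coefficient of $f_7$, namely $\tfrac94(E_0+2E_1)^2 z_3^2 z_{31}$, precisely because $z_3 z_{21}>z_2 z_{31}$; and the $y^0,\dots,y^3$ comparisons for $f_6$ are exactly where $z_{31}\le4$ (equivalently $z_3\le3|z_1|$) is used. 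For $f_8$ one clears the denominator $4(z_{21}+yz_{31})$ and again compares explicitly. Your proposal identifies the right place to relax (H6)--(H8) but, as you yourself flag, stops short of the ``computational step'' that actually makes the relaxation true; that step is not a black box to be deferred but the point at which the specific constant $K$ must be introduced.
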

\begin{proof}
We first notice that Proposition \ref{upper bnd} implies 
\beq
\label{da sopra ok}
\inf_{v\in V} \Mint\bigl(\eps^4 v_{xx}^2+\eps^{-2}W(v_x)+\eps^{-2}v^2\bigr)\,\mathrm dx\leq A_0 z_{21}^{-1} + c\eps^\zeta.
\eeq
Let us assume $\eta\in (0,\eta_1)$, with $\eta_1$ as in the statement of Proposition \ref{sotto}, $\eps\leq \eta^{q+1}$, and define
$$K:=\frac{z_{31}}{z_{21}}\sqrt[3]{\frac{(E_0+E_1)^2}{E_0^2}}>1.$$ 
We notice that $B_0>KA_0$. We now look for a lower bound for the energy $I^\eps$ of the type \eqref{tesi inf 000}, but with $B_0$ replaced by $KA_0$. This new bound does not rely on (H6)--(H8), but is deduced by strongly exploiting the estimates in Section \ref{section 4}. \\

It can be checked by using $z_2<z_3$, $z_{21}<z_{31}$, $z_2z_{31}<z_3z_{31},$ that $f_7,f_8$ given in (H7)--(H8) satisfy
\beq
\label{sottof7}
f_7^{\frac13}(y)\geq A_0 + KA_0 y,\qquad f_8^{\frac13}(y)\geq A_0 + KA_0 y,\qquad\text{for every $y\geq 0$}.
\eeq
Furthermore, as we assumed $z_3\leq 3|z_1|$, we also have
\beq
\label{sottof6}
f_6^{\frac13}(y)\geq A_0 + KA_0 y,\qquad\text{for every $y\geq 0$}.
\eeq
Therefore, collecting the estimates for every $L-$interval, from Proposition \ref{sotto} together with \eqref{sottof7}--\eqref{sottof6} and the fact that $B_0>KA_0$ we deduce
\[
\int_{\sum_i(L_i\cup\Sigma_i)}\bigl(\eps^4 v_{xx}^2 +\eps^{-2}W(v_x)+\eps^{-2}v^2\bigr)\,\mathrm dx+ c\eta
\geq
A_0\sum_i(\ai+K\bi).
\]
Here we also made use of Lemma \eqref{numero} to bound ${N_v}$ with $c\eps^{-1}$. Finally, recalling \eqref{D0 DN} we deduce
\beq
\label{stimettada2soldi}
\Mint \bigl(\eps^4 v_{xx}^2 +\eps^{-2}W(v_x)+\eps^{-2}v^2\bigr)\,\mathrm dx+ c\eta
\geq
A_0 (\lambda_2^\eta+K\lambda_3^\eta).
\eeq
Now, thanks to \eqref{sigmadelta}--\eqref{dacitarepoi 000}, and the fact that $\eps\leq \eta^{q+1}$, we can write
\beq
\label{ineq 001}
1\geq \lambda_1^\eta+\lambda_2^\eta+\lambda_3^\eta=1-\mathscr{L}(\Sigma^\eta)\geq 1 - c\eps\eta,
\eeq
while, on the other hand,
\beq
\label{ineq 002}
0 = \Mint v_x \,\mathrm dx\leq \sum_{k=1}^3(z_k+\eta)\lambda_k^\eta+\tilde{r}\leq \sum_{k=1}^3z_k\lambda_k^\eta+c\eta ,\qquad
0 = \Mint v_x \,\mathrm dx\geq \sum_{k=1}^3z_k\lambda_k^\eta-c\eta
\eeq
where $\tilde{r}$ is defined as in \eqref{tilder}, and has been bounded according to the estimate in \eqref{bound for tilder}. By combining \eqref{ineq 001}--\eqref{ineq 002} we are led to
\beq
\label{ineq 003}
 z_{31}^{-1}(1-z_{21}\lambda^\eta_2) + c\eta \geq \lambda_3^\eta\geq z_{31}^{-1}(1-z_{21}\lambda^\eta_2) - c\eta,
\eeq
so that, by \eqref{stimettada2soldi},
\beq
\label{da sotto pure}
\Mint \bigl(\eps^4 v_{xx}^2 +\eps^{-2}W(v_x)+\eps^{-2}v^2\bigr)\,\mathrm dx+ c\eta
\geq
A_0 \Bigl(\lambda_2^\eta+ \frac{K}{z_{31}}(1-z_{21}\lambda^\eta_2)\Bigr).
\eeq
The right hand side of the above inequality is a decreasing function of $\lambda_2^\eta$ so minimised by the biggest admissible $\lambda_2^\eta$. But as $\lambda_3^\eta\geq 0$, \eqref{ineq 003} entails $\lambda_2^\eta\leq z_{21}^{-1} + c\eta$, thus implying
\beq
\label{da sotto super bound}
\Mint \bigl(\eps^4 v_{xx}^2 +\eps^{-2}W(v_x)+\eps^{-2}v^2\bigr)\,\mathrm dx+ c\eta
\geq
A_0 z_{21}^{-1}.
\eeq
Choosing $\eta =  \eps^{\frac1{q+1}}$ and $\eps_1 =\min\{\eta^{q+1}_1,\eps_0\}$, where $\eps_0$ is as in Proposition \ref{upper bnd} we complete the proof of \eqref{tesi inf}. Finally, combining \eqref{da sopra ok} and \eqref{da sotto pure}--\eqref{da sotto super bound} 
we get
$$
c\eps^\xi+A_0z_{21}^{-1}\geq A_0\lambda_2^\eta+z_{31}^{-1}KA_0(1-z_{21}\lambda_2^\eta)\geq A_0z_{21}^{-1}- c\eps^\xi,
$$ 
which is
$$
\bigl|(\lambda_2^\eta-z_{21}^{-1})(1-K\frac{z_{21}}{z_{31}})\bigr|\leq c\eps^\xi.
$$
This implies $|\lambda_2^\eta-z_{21}^{-1}|\leq c\eps^\xi$ which, by \eqref{ineq 003}, concludes the proof.
\end{proof}

\subsection{Proof of Theorem \ref{main Thm 2}}
This follows as a corollary of Theorem \ref{ThmScaling}.\\

By Proposition \ref{compattezza} together with \eqref{tesi inf} we know that every $u_{\eps_j}\in V$ sequence of minimisers for $I^{\eps_j}$, and hence of minimisers for $\E^{\eps_j}$, generates, up to a subsequence, a gradient Young measure $\nu\in \mathrm{GYM}^p(0)$ as $\eps_j\to0$, and that $\supp\nu_x\subset \mathcal{Z}$ almost everywhere in $(0,1)$. As a consequence, $\nu_x$ is of the form
$$
\nu_x = \lambda_1(x)\delta_{z_1}+\lambda_2(x)\delta_{z_2}+\lambda_3(x)\delta_{z_3},
$$
with the $\lambda_i$'s satisfying \eqref{comp 01} for a.e. $x\in(0,1).$ Therefore we just need to show that $\lambda_3=0$ a.e. in $(0,1)$. Let us consider a continuous function $f_3\colon\R\to[0,1]$ which is equal to 1 for all $s\in\R$ such that $|s-z_3|\leq\frac{\eta_0}2$, and $0$ if $|s-z_3|\geq\eta_0$. By arguing as in the proof of Lemma \ref{limite la} we get
$$
0\leq \Mint \lambda_3\,\mathrm dx = \lim_j\Mint f_3(u_{\eps_j,x})\,\mathrm dx \leq \lim_j\bigl(\lambda^\eta_{3}(u_{\eps_j})+c\eps_j^2\eta^{-q}\bigr).
$$
After choosing $\eta = \eps_j^{\frac1{q+1}}$, \eqref{tesi inf 2} gives the sought result.

\section{Some remarks on the assumptions}
\label{remarks on ass}
It is worth spending some words on assumptions (H6)--(H8). Hypothesis (H6) is needed in our construction of a lower bound, but it might be possible to remove it by making the arguments of Section \ref{section 4} more involved. It is easy to check that it fails whenever $z_3>3|z_1|$. On the other hand, as mentioned in the introduction, it turns out that (H7)--(H8) are necessary conditions in order to prove Theorem \ref{main Thm}, and 
the second $\Gamma$--limit would have a different form without these assumptions. Indeed, as explained in the introduction, these hypotheses guarantee that the microstructures used in the construction of Proposition \ref{upper bnd} are energetically preferable to those constructed in the following Propositions, and shown in Figure \ref{fig:fig}. 
\begin{prpstn}
\label{H7 prop}
Assume {\rm(H7)} is not satisfied. Then, there exist $\lambda_1,\lambda_2,\lambda_3\in(0,1)$ satisfying \eqref{comp 01}, $u_\eps\in V$ such that $(u_\eps,\delta_{u_{\eps,x}})\to(0,\nu)$ in $L^2(0,1)\times L_{w^*}^\infty(0,1;\mathcal{M})$, where $\nu_x = \lambda_1\delta_{z_1}+\lambda_2\delta_{z_2}+\lambda_3\delta_{z_3}$ a.e. in $(0,1)$, and
$$\limsup_{\eps\downarrow0} I^\eps(u_\eps)< A_0\lambda_2+B_0\lambda_3.$$ 
\end{prpstn}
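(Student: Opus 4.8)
The plan is to \emph{explicitly construct} a competitor microstructure whose energy realizes (up to a vanishing error) the lower bound $\ai f_7^{1/3}(\bi/\ai)$ appearing in Proposition \ref{sotto} for type II $D$-intervals, and then to exploit the hypothesis that (H7) fails, i.e.\ that $f_7(y_*) < (A_0 + B_0 y_*)^3$ for some $y_* > 0$, to conclude that this competitor beats $A_0\lambda_2 + B_0\lambda_3$. Concretely, I would fix $y_* > 0$ with $f_7(y_*)^{1/3} < A_0 + B_0 y_*$, and build $u_\eps$ on $(0,1)$ as a quasi-periodic repetition of a single building block of period $\sim \eps d$ (for a parameter $d$ to be optimized), the block being a type II $D$-interval: the gradient $u_{\eps,x}$ starts at $z_1$, rises through an $A_+^\eta$-transition to an interval where it sits near $z_2$, crosses through $z_2$ into a $B_+^\eta$-transition, sits near $z_3$, comes back down through a $B_-^\eta$-transition to near $z_2$, and descends through an $A_-^\eta$-transition back to $z_1$ — but arranged so that $u$ itself changes sign \emph{inside} the block (so $v(x_i^-)v(x_i^+)<0$ and $n_i = 2$, forcing type II), with the two near-$z_2$ plateaux and the near-$z_3$ plateau split as $\omega_i^a = \omega_i^b = \tfrac12$ about the zero of $u$. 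The relative lengths of the $z_2$-plateau and the $z_3$-plateau should be in ratio giving $\bi/\ai = y_*$, and $u$ is reconstructed by integration so that $u$ vanishes at the block endpoints, exactly as in the proof of Proposition \ref{upper bnd}.

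The key steps, in order: (i) using the transition-layer ODE $\eps^3 w_x = \sqrt{W(w)}$ exactly as in \eqref{ODE}--\eqref{ODE2} to glue together the transition layers with interfacial energy $\eps(E_0+E_1) + O(\eps\cdot\text{small})$ per descent/ascent through both wells, and the same corner-smoothing trick (the $\mu$, $\theta$ construction) to cross from a $z_2$-plateau into a $B$-transition with negligible extra energy; (ii) choosing the plateau lengths proportional to $\ai$ and $\bi = y_*\ai$ and placing the sign change of $u$ at the midpoint so that the $L^2$-norm of $u$ on the block equals $\tfrac{1}{12}h(\ai,\bi) \cdot d^2 + o$, matching the sharp constant $\tfrac{1}{12}$ of \eqref{stima 1 sale} with $\omega^a = \omega^b = \tfrac12$; here $h$ is as in \eqref{defHnew} and a short computation identifies $\tfrac14(E_0+2E_1)^2 \cdot (\text{the } z_{21}, z_{31} \text{-weighted combination}) = f_7$ after optimizing over $d$; (iii) optimizing $d$ via $\min_d(\tfrac{1}{12}h\,d^2 + (2E_0 + 2\cdot\tfrac12\cdot(2E_1))d^{-1}) \sim \ai f_7^{1/3}(y_*)$, i.e.\ the interfacial cost per block is $2E_0 + 2E_1$ because one goes up and down through $z_2$ once and up and down through $z_3$ once — wait, the factor in $f_7$ is $(E_0 + 2E_1)$, so the block should pass through the $z_3$-well \emph{twice per traversal of the} $z_2$-\emph{well}; I would arrange the block so the gradient does $z_1 \to z_2 \to z_3 \to z_2 \to z_3 \to z_2 \to z_1$, giving interfacial cost $2E_0 + 4E_1$ over a block that produces $\bi$ twice, hence $(E_0 + 2E_1)$ after normalizing; (iv) converting the per-block energy $\leq \ai(f_7^{1/3}(y_*)) + o(1)$ summed over $\sim 1/(\eps d)$ blocks into a total energy $\leq \lambda_2 f_7^{1/3}(y_*) + o(1)$ where $\lambda_2 = \mathscr{L}(\{|u_{\eps,x}-z_2|\leq\eta\}) \to$ its limit, and where $\lambda_3 = y_*\lambda_2$, $\lambda_1 = 1 - \lambda_2 - \lambda_3$ satisfy \eqref{comp 01} by construction of the zero-average property; (v) finally, $\limsup_\eps I^\eps(u_\eps) \leq \lambda_2 f_7^{1/3}(y_*) < \lambda_2(A_0 + B_0 y_*) = A_0\lambda_2 + B_0\lambda_3$, which is the claim; the Young measure convergence $(u_\eps, \delta_{u_{\eps,x}}) \to (0,\nu)$ follows from $\|u_\eps\|_{L^\infty} = O(\eps d) \to 0$ and standard oscillation/weak-$*$ arguments as in Proposition \ref{compattezza} and Lemma \ref{limite la}.

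The main obstacle will be step (iii): getting the \emph{exact constant} $f_7$ (with its precise coefficient $\tfrac94(E_0 + 2E_1)^2$ and the $z_{21}, z_{31}$ weights) out of the construction, which requires carefully bookkeeping (a) how many times the building block traverses each well — the $(E_0 + 2E_1)$ structure dictates a specific topology for the block's gradient profile — and (b) exactly how the $L^2$-mass of $u$ on the block distributes, so that after the scaling optimization in $d$ the cube root of $f_7$ emerges. This is essentially reverse-engineering the sharpness claim in the remark after Proposition \ref{sotto}; I expect one must track the volume fractions $\omega^a, \omega^b$ through the $D$-interval just as in Step 1 of that proof, but now making the inequalities equalities. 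The error terms (from transition-layer widths, corner smoothing, the $\Sigma^\eta$ set, and the discretization into an integer number of blocks) are all powers of $\eps$ and are controlled exactly as in Proposition \ref{upper bnd}, so they pose no real difficulty once the leading-order bookkeeping is correct.
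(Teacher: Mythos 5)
Your high-level strategy matches the paper's: pick $\hat y$ where (H7) fails, set $\lambda_3/\lambda_2 = \hat y$, build a periodic competitor whose per-unit-length energy is $\lambda_2 f_7^{1/3}(\hat y) + o(1)$, and conclude from $\lambda_2 f_7^{1/3}(\hat y) < A_0\lambda_2 + B_0\lambda_3$. The optimization-in-$d$ scaling, the Young-measure convergence arguments, and the error bookkeeping are all correctly identified. However, there is a genuine gap in the construction of the building block.

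First, a classification error: you state that $n_i = 2$ with $v(x_i^-)v(x_i^+)<0$ forces type II, but by Definition \ref{GliDIntervals} that combination gives type III or IV (whose sharp bound is $f_8^{1/3}$, handled in Proposition \ref{H8 prop}); type II requires $n_i=0$ or $n_i\geq 4$. You catch the mismatch with the $(E_0+2E_1)$ factor and move to a block with $n_i=4$, which is correct, but your repair $z_1 \to z_2 \to z_3 \to z_2 \to z_3 \to z_2 \to z_1$ does not realize $f_7^{1/3}$. With $\omega^a=\omega^b=\tfrac12$ and the block symmetric about the zero of $u$, the gradient on the half-block where $u\geq 0$ runs $z_2 \to z_3 \to z_2 \to z_1$. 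By Lemma \ref{dasottointegrale}, among rearrangements with the same level-set measures the $L^2$-norm of $u$ is minimized by the non-decreasing ordering $z_2$ then $z_3$; inserting a second $z_2$-plateau between the $z_3$-plateau and the $z_1$-region strictly increases $\int u^2$. Your block therefore achieves some $\tilde f^{1/3}(\hat y)$ with $\tilde f > f_7$, and the failure of (H7), i.e.\ $f_7(\hat y)<(A_0+B_0\hat y)^3$, does \emph{not} imply $\tilde f(\hat y)<(A_0+B_0\hat y)^3$, so the conclusion does not follow.

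The correct block, used in the paper and shown in Figure \ref{fig0a}, has profile $z_1 \to z_3 \to z_2 \to z_3 \to z_1$, symmetric about the block midpoint with the $z_2$-plateau in the center, $u$ odd about the midpoint. Then $n_i=4$ (the $z_1\!\to\!z_3$ and $z_3\!\to\!z_1$ passages each contribute one $B_\pm^\eta$-transition, plus the two around the central $z_2$-plateau), the interfacial cost per block is $\eps(2E_0+4E_1)=2\eps(E_0+2E_1)$, and on the half-block where $u\geq 0$ the ordering is exactly $z_2$ (half the central plateau), then $z_3$, then $z_1$ — the minimizing order — so the $L^2$-norm matches the $\tfrac{1}{12}$-weighted expression in \eqref{idontknowwhat} and, after the $\Sigma_i$ contribution and the $d$-optimization, exactly yields $\lambda_2 f_7^{1/3}(\hat y)$. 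This topology, rather than yours, is the one you need to run your argument through to completion.
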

\begin{proof}
The proof of the Proposition is very similar to the one of Proposition \ref{upper bnd} in many details. For this reason we skip some long computation and just give the idea of the proof.\\

Let $\hat y\geq 0$ such that the inequality in (H7) does not hold. Let us choose
\beq
\label{lambdas vari}
\lambda_2 = (\hat yz_{31}+z_{21})^{-1}, \qquad \lambda_3 = z_{31}^{-1}-\frac{z_{21}}{z_{31}}\lambda_2,\qquad \lambda_1 = 1-\lambda_2-\lambda_3,
\eeq
so that $\frac{\lambda_3}{\lambda_2}=\hat y$. It is easy to check that $\lambda_1,\lambda_2,\lambda_3\in(0,1)$ and satisfy \eqref{comp 01}. We divide $(0,1)$ in $N_\eps$ subintervals $(x_i,x_{i+1})$ of length $N_\eps^{-1}$, where $x_i=iN_\eps^{-1}$ for $i=0,\dots,N_\eps$. On every interval we construct $v_\eps$ as a suitable continuous approximation (see the proof of Proposition \ref{upper bnd}) of the function (see the derivative of the function in Figure \ref{fig0a})
\[
\hat v_\eps(s)=
\begin{cases}
z_2,		\qquad&\text{if $0 \leq |s-(2N_\eps)^{-1}|\leq \lambda_2 (2N_\eps)^{-1}$},\\
z_3,	\qquad&\text{if $\lambda_2 N_\eps^{-1} < |s-(2N_\eps)^{-1}| \leq  \lambda_2 (2N_\eps)^{-1}+\lambda_3 (2N_\eps)^{-1}$}\\
z_1,		\qquad&\text{if $\lambda_2 (2N_\eps)^{-1}+\lambda_3 (2N_\eps)^{-1}\leq |s-(2N_\eps)^{-1}|\leq (2N_\eps)^{-1}$}.
\end{cases}
\]
{We remark that, as in the proof of Proposition \ref{upper bnd}, $v_\eps$ must satisfy $\int_0^{N_\eps^{-1}} v_\eps(s)\,\mathrm{d}s=0,$ and $v_\eps(0)=v_\eps(N^{-1}_\eps)$.} Therefore, after defining $w_\eps$ as the $N_\eps^{-1}-$periodic extension of $v_\eps$, we construct $u_\eps$ as in \eqref{uuu}. Now, an argument as the one in the proof of Proposition \ref{upper bnd}, allows us to prove that
\[
\begin{split}
I^\eps(u_\eps)\leq 3^\frac23 2^{-\frac23}(E_0+2E_1)^\frac23\Bigl(z_2^2z_{21}\lambda_2^3+z_3^2z_{31}\lambda_3^3+3\lambda_2\lambda_3z_2z_{31}(\lambda_3z_3+\lambda_2z_2)	\Bigr)^\frac13+ c\eps^\xi = \lambda_2f_7 \bigl(\hat{y}\bigr)
+ c\eps^\xi
\end{split}
\]
for some $\xi>0,$ and that \eqref{da combino} holds. Here we have used that, by construction, $\frac{\lambda_3}{\lambda_2}=\hat y$. As $\hat{y}$ contradicts (H7), we have
\beq
\label{nongamma}
I^\eps(u_\eps)< A_0\lambda_2 + B_0\lambda_3 + c\eps^\xi.
\eeq
{Furthermore, by arguing as to get \eqref{da combino}, we have
\beq
\label{da combino 2}
\bigl| \mathscr{L}\bigl( (0,1)\cap\{| u_x-z_2|\leq \sigma\} \bigr) - \lambda_{2}\bigr| + \bigl| \mathscr{L}\bigl( (0,1)\cap\{| u_x-z_3|\leq \sigma\} \bigr) - \lambda_3\bigr|\leq c \eps^\zeta.
\eeq}
Thus, taking the $\limsup$ in \eqref{nongamma}, by Lemma \ref{limite la} we obtain the sought result.
\end{proof}

\begin{figure}
\begin{subfigure}{.33\textwidth}
  \centering
  \includegraphics[width=.99\linewidth]{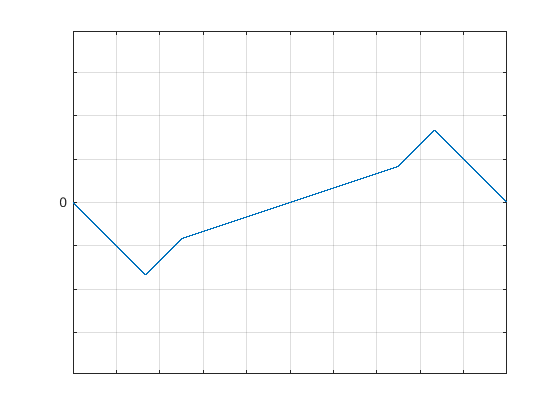}
  \caption {}
  \label{fig0a}
\end{subfigure}%
\begin{subfigure}{.33\textwidth}
  \centering
  \includegraphics[width=.99\linewidth]{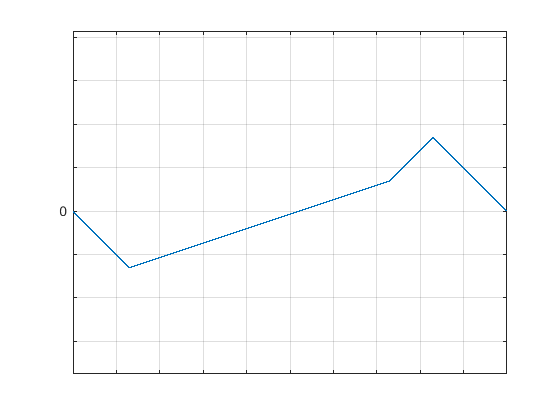}
  \caption{}
  \label{fig0b}
\end{subfigure}
\begin{subfigure}{.33\textwidth}
  \centering
  \includegraphics[width=.99\linewidth]{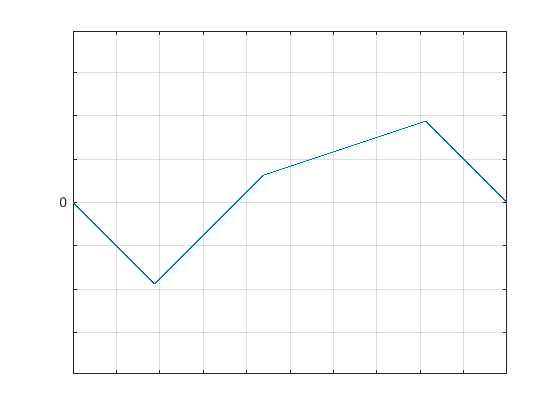}
  \caption{}
  \label{fig0c}
\end{subfigure}
\caption{\ref{fig0a}. Microstructure of lower energy in case (H7) does not hold. Microstructures of low energy in case (H8) does not hold are represented in Figures \ref{fig0b} and \ref{fig0c}, respectively in the case where $\hat y\leq \sqrt{\frac{z_2z_{21}}{z_3z_{31}}}$ and $\hat y> \sqrt{\frac{z_2z_{21}}{z_3z_{31}}}$.}
\label{fig:fig}
\end{figure}

\begin{prpstn}
\label{H8 prop}
Assume {\rm(H8)} is not satisfied. Then, there exist $\lambda_1,\lambda_2,\lambda_3\in(0,1)$ satisfying \eqref{comp 01}, $u_\eps\in V$ such that $(u_\eps,\delta_{u_{\eps,x}})\to(0,\nu)$ in $L^2(0,1)\times L_{w^*}^\infty(0,1;\mathcal{M})$, where $\nu_x = \lambda_1\delta_{z_1}+\lambda_2\delta_{z_2}+\lambda_3\delta_{z_3}$ a.e. in $(0,1)$, and
$$\limsup_{\eps\downarrow0} I^\eps(u_\eps)< A_0\lambda_2+B_0\lambda_3.$$ 
\end{prpstn}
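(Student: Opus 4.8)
The plan is to follow the proof of Proposition~\ref{H7 prop} almost verbatim, the only substantive change being that the type~II building block used there is replaced by a type~III or a type~IV building block, according to whether $\hat y\le\sqrt{z_2z_{21}/(z_3z_{31})}$ or $\hat y>\sqrt{z_2z_{21}/(z_3z_{31})}$; these are exactly the microstructures drawn in Figures~\ref{fig0b} and~\ref{fig0c}. First I would pick $\hat y\ge 0$ at which (H8) fails, that is $f_8(\hat y)<(A_0+B_0\hat y)^3$, and set, exactly as in~\eqref{lambdas vari},
$$\lambda_2=(\hat y\,z_{31}+z_{21})^{-1},\qquad \lambda_3=z_{31}^{-1}-\frac{z_{21}}{z_{31}}\lambda_2=\hat y\,\lambda_2,\qquad \lambda_1=1-\lambda_2-\lambda_3,$$
so that $\lambda_3/\lambda_2=\hat y$; using $z_{21}\lambda_2+z_{31}\lambda_3=1$ one checks $\lambda_1,\lambda_2,\lambda_3\in(0,1)$ and that they satisfy~\eqref{comp 01}.

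Next I would split $(0,1)$ into $N_\eps$ equal subintervals of length $N_\eps^{-1}$ and, on a reference period, prescribe a piecewise-constant target derivative $\hat v_\eps$ with values in $\{z_1,z_2,z_3\}$ and Lebesgue fractions $\lambda_1,\lambda_2,\lambda_3$, arranged so that the mean-zero, $N_\eps^{-1}$-periodic antiderivative changes sign outside the $z_3$-plateau when $\hat y\le\sqrt{z_2z_{21}/(z_3z_{31})}$ and strictly inside it when $\hat y>\sqrt{z_2z_{21}/(z_3z_{31})}$; the remaining geometric parameter, essentially the split of the $z_2$-band around the $z_3$-band, i.e.\ the quantities $\omega^a,\omega^b$ of Step~1 of Proposition~\ref{sotto}, is fixed so as to realise the minimiser of the function $h^*$ in~\eqref{minimiIVh}. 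As in the proof of Proposition~\ref{upper bnd} I would then replace the jumps of $\hat v_\eps$ by steep affine ramps, choose a shift and a small length correction (as there) so that $\int_0^{N_\eps^{-1}}v_\eps=0$ and $v_\eps(0)=v_\eps(N_\eps^{-1})$, let $w_\eps$ be the $N_\eps^{-1}$-periodic extension of $v_\eps$ and $u_\eps(x)=\int_0^x w_\eps(s)\,\mathrm ds$ as in~\eqref{uuu}. The interfacial cost of one period is that of a single up-and-down excursion through all three wells (governed by $E_0+E_1$, up to an $O(\eps\eta)$ error), and choosing $N_\eps$ to balance it against the $\eps^{-2}L^2$-term — exactly as $d^*$ is chosen in the proof of Proposition~\ref{upper bnd}, and reading~\eqref{normaL2IIIeIV} as an equality up to $O(\eps^\xi)$ for this configuration — yields
$$I^\eps(u_\eps)\le \lambda_2\,f_8^{\frac13}(\hat y)+c\eps^\xi$$
for some $\xi>0$. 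Since $\hat y$ violates (H8) we have $f_8^{\frac13}(\hat y)<A_0+B_0\hat y$, hence $\lambda_2 f_8^{\frac13}(\hat y)<A_0\lambda_2+B_0\lambda_3$, so $I^\eps(u_\eps)<A_0\lambda_2+B_0\lambda_3$ for $\eps$ small. Arguing as for~\eqref{da combino} and~\eqref{da combino 2} one also obtains
$$\bigl|\mathscr L\bigl((0,1)\cap\{|u_{\eps,x}-z_2|\le\sigma\}\bigr)-\lambda_2\bigr|+\bigl|\mathscr L\bigl((0,1)\cap\{|u_{\eps,x}-z_3|\le\sigma\}\bigr)-\lambda_3\bigr|\le c\eps^\zeta,$$
whence $(u_\eps,\delta_{u_{\eps,x}})\to(0,\nu)$ in $L^2(0,1)\times L^\infty_{w^*}(0,1;\mathcal M)$ with $\nu_x=\lambda_1\delta_{z_1}+\lambda_2\delta_{z_2}+\lambda_3\delta_{z_3}$; taking $\limsup_{\eps\downarrow0}$ and invoking Lemma~\ref{limite la} finishes the proof.

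I expect the main obstacle to be the type~IV construction: producing a periodic profile whose mean-zero antiderivative genuinely vanishes at a point interior to the $z_3$-plateau with the asymmetric split $(\omega^a,\omega^b)$ that minimises $h^*$, and then checking that the resulting $L^2$-energy equals the value in~\eqref{minimiIVh} — where the cross term $f_0$ of~\eqref{deff0} enters — rather than a strictly larger quantity. Concretely this means carrying the chain~\eqref{stimona99}, \eqref{stima 1 general}, \eqref{minimiIVh} through as equalities up to lower-order corrections while tracking the two-parameter family of admissible geometries, together with a short degree-of-freedom count showing that periodicity of $v_{\eps,x}$, the zero-mean condition on $v_\eps$, the three prescribed volume fractions, and the prescribed location of the zero of $v_\eps$ can be imposed at once. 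Once such a configuration is exhibited, the energy estimate and the passage to the limit are routine adaptations of the arguments of Propositions~\ref{upper bnd} and~\ref{H7 prop}.
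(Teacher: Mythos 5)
Your proposal is correct and follows essentially the same route as the paper: choose $\hat y$ violating (H8), set $\lambda_1,\lambda_2,\lambda_3$ as in \eqref{lambdas vari}, build a periodic piecewise-constant target derivative whose mean-zero antiderivative vanishes outside the $z_3$-plateau (type III, $\hat y\le\sqrt{z_2z_{21}/(z_3z_{31})}$) or inside it (type IV, $\hat y>\sqrt{z_2z_{21}/(z_3z_{31})}$), mollify as in Proposition \ref{upper bnd}, balance the interfacial cost $2(E_0+E_1)$ per period against the $\eps^{-2}L^2$-term, and invoke Lemma \ref{limite la}. What you flag as the ``main obstacle'' -- exhibiting the asymmetric split realising the minimum of $h^*$ in \eqref{minimiIVh} -- is resolved in the paper simply by giving the closed form $\omega^a=(z_2z_{21}-z_3z_{31}\hat y^2)\bigl(2z_2(z_{21}+\hat y z_{31})\bigr)^{-1}$, $\omega^b=-\omega^a$, and writing out the two explicit profiles $\hat v_a^\eps$, $\hat v_b^\eps$; the degree-of-freedom count you worry about is then a direct computation, exactly as in Proposition \ref{upper bnd}, and the paper itself only sketches it.
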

\begin{proof}
Again, the proof of this Proposition is very similar to the one of Proposition \ref{upper bnd} and of Proposition \ref{H7 prop} in many details. For this reason we skip some long computation and just give the idea of the proof.\\

Let $\hat y\geq 0$ such that (H8) does not hold. Let us choose $\lambda_1,\lambda_2,\lambda_3$ that satisfy \eqref{comp 01} as in \eqref{lambdas vari}. Again, we divide $(0,1)$ in $M_\eps$ subintervals $(x_i,x_{i+1})$ of length $M_\eps^{-1}$, where $x_i=iM_\eps^{-1}$ for $i=0,\dots,M_\eps$. We have two cases:
$$
0\leq \hat{y}\leq \sqrt{\frac{z_2z_{21}}{z_3z_{31}}},\qquad\text{ and }\qquad \sqrt{\frac{z_2z_{21}}{z_3z_{31}}}< \hat{y}.
$$
In the first case (see the derivatives of the function in Figure \ref{fig0b}), we define $\omega^a:=(z_2z_{21}- z_3z_{31}\hat y^2)(2z_2(z_{21} + \hat y z_{31}))^{-1}$ and
\[
\hat v_{a}^\eps (s)=
\begin{cases}
z_1,		\qquad&\text{if $0 \leq s< \frac{z_2}{z_1}(1-\omega^a)\lambda_2 M_\eps^{-1}$},\\
z_2,		\qquad&\text{if $\frac{z_2}{z_1}(1-\omega^a)\lambda_2 M_\eps^{-1}\leq s<\Bigl(\frac{z_2}{z_1}(1-\omega^a)+1\Bigr)\lambda_2 M_\eps^{-1}  $},\\
z_3,		\qquad&\text{if $\Bigl(\frac{z_2}{z_1}(1-\omega^a)+1\Bigr)\lambda_2 M_\eps^{-1}\leq s<\Bigl(\frac{z_2}{z_1}(1-\omega^a)+1\Bigr)\lambda_2 M_\eps^{-1}+\lambda_3 M_\eps^{-1}  $},\\
z_1,		\qquad&\text{if $\Bigl(\frac{z_2}{z_1}(1-\omega^a)+1\Bigr)\lambda_2 M_\eps^{-1}+\lambda_3 M_\eps^{-1}\leq s$}.
\end{cases}
\]
In the second (see the derivatives of the function in Figure \ref{fig0c}), we define $\omega^b:=-\omega^a$ and
\[
\hat v_{b}^\eps (s)=
\begin{cases}
z_1,		\qquad&\text{if $0 \leq s< \frac{z_3}{z_1}(1-\omega^b)\lambda_3 M_\eps^{-1}$},\\
z_3,		\qquad&\text{if $\frac{z_3}{z_1}(1-\omega^b)\lambda_3 M_\eps^{-1}\leq s<\Bigl(\frac{z_3}{z_1}(1-\omega^b)+1\Bigr)\lambda_3 M_\eps^{-1}  $},\\
z_2,		\qquad&\text{if $\Bigl(\frac{z_3}{z_1}(1-\omega^b)+1\Bigr)\lambda_3 M_\eps^{-1}\leq s<\Bigl(\frac{z_3}{z_1}(1-\omega^b)+1\Bigr)\lambda_3 M_\eps^{-1}+\lambda_2 M_\eps^{-1}  $},\\
z_1,		\qquad&\text{if $\Bigl(\frac{z_3}{z_1}(1-\omega^b)+1\Bigr)\lambda_3 M_\eps^{-1}+\lambda_2 M_\eps^{-1}\leq s$}.
\end{cases}
\]
Now, let us consider suitable continuous approximations $v_a^\eps$ and $v_b^\eps$ of $\hat v_a^\eps$ and $\hat v_b^\eps$, which can be obtained in the same way as the one in Proposition \ref{upper bnd}. Again, we remark that $v^\eps_l$ for $l=a,b$ must satisfy $v_l^\eps(0)=v_l^\eps(M^{-1}_\eps)$ and $\int_0^{M^{-1}_\eps}v_l^\eps(s)\,\mathrm{d}s=0.$ Let $w_a^\eps, w_b^\eps$ be the $M_\eps^{-1}-$periodic extensions of $v_a^\eps$ and $v_b^\eps$ respectively, and define $u_\eps$ as in \eqref{uuu}. An argument as the one in Proposition \ref{upper bnd} allows hence to prove
\[
\begin{split}
I^\eps(u_\eps)\leq 3^\frac23 (E_0+E_1)^\frac23\Bigl(z_2^2z_{21}\lambda_2^3+z_3^2z_{31}\lambda_3^3- 3\lambda_2^3f_0(\hat{y})	\Bigr)^\frac13+ c\eps^\xi
=\lambda_2 f^\frac13_8(\hat{y})+ c\eps^\xi,
\end{split}
\]
for some $\xi>0$, for some $\eps_0>0$, and for every $\eps\in(0,\eps_0)$. Here $f_0$ is as in \eqref{deff0}, and we used the fact that, by construction, $\frac{\lambda_3}{\lambda_2}=\hat y$. The fact that $\hat y$ violates the inequality in (H8) yields
\beq
\label{mobasta}
I^\eps(u_\eps) < A_0\lambda_2 + B_0\lambda_3+ c\eps^\xi.
\eeq
Furthermore, by arguing as in the proof of Proposition \ref{upper bnd} we can prove estimates as the ones in \eqref{da combino 2}. Therefore, by taking the $\limsup$ in \eqref{mobasta} and exploiting Lemma \ref{limite la} we conclude the proof of the proposition.
\end{proof}

\subsection{Two examples}
\label{section 7}
An easy example where hypotheses (H1)-(H8) hold is when $$W(s) = (s-1)^2(s+1)^2(s-3^{-1})^2.$$
Indeed, in this case $E_0\approx 1.054$, $E_1\approx0.165$, $A_0\approx0.718$, $B_0\approx1.883$, $z_1 = -1,$ $z_2 = \frac13$, $z_3=1$. It is trivial to check that in this context (H1)--(H5) hold. Hypotheses (H6)--(H8) are here verified graphically (cf. Figure \ref{fig:fig2}). It can be proved that (H6)--(H8) hold for $W$ of the form
$$
W(s) = (s-1)^2(s+1)^2(s-z_2)^2,
$$
whenever $z_2\in(-0.49,0.49)$. The bound on $z_2$ is not sharp.\\

\begin{figure}
\begin{subfigure}{.33\textwidth}
  \centering
  \includegraphics[width=.9\linewidth]{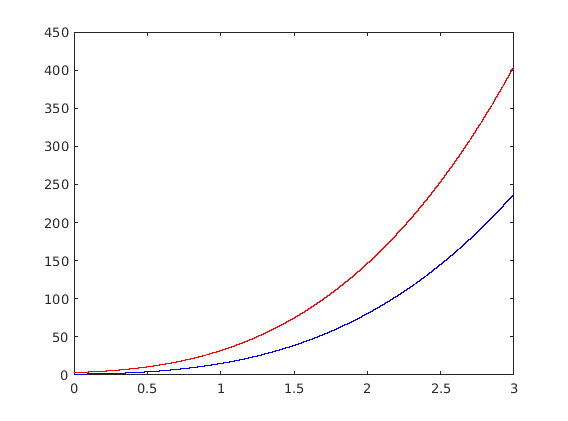}
  \caption{}
  \label{fig99}
\end{subfigure}%
\begin{subfigure}{.33\textwidth}
  \centering
  \includegraphics[width=.9\linewidth]{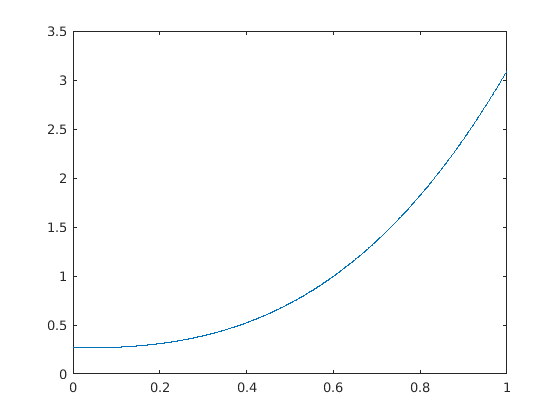}
  \caption{}
  \label{fig1}
\end{subfigure}%
\begin{subfigure}{.33\textwidth}
  \centering
  \includegraphics[width=.9\linewidth]{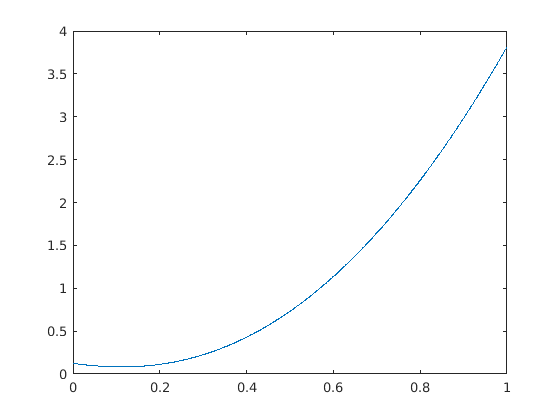}
  \caption{}
  \label{fig2}
\end{subfigure}
\caption{Verification of the hypotheses (H6)--(H8) for the examples of Section \ref{section 7}. Figure \ref{fig99} is the graphical verification of (H6) for the two examples: in blue the case $z_2=\frac13$, in red the one with $z_2=\frac12$. Figure \ref{fig1} and \ref{fig2} verify (H7) and (H8) in the example with $z_2=\frac13.$}
\label{fig:fig2}
\end{figure}

On the other hand, let us consider $$W(s) = (s-1)^2(s+1)^2(s-2^{-1})^2,$$
where, in our notation, $E_0\approx 1.406$, $E_1\approx0.073$, $A_0\approx1.186$, $B_0\approx2.143$, $z_1 = -1,$ $z_2 = \frac12$, $z_3=1$. In this case (H1)--(H6) hold (cf. Figure \ref{fig99}). However, hypotheses (H7) and (H8) fail respectively in a neighbourhood of $\hat y_7 = 0.585$ and $\hat y_8 = 0.204$. Here, it is energetically very cheap to pass from $z_2$ to $z_3$ so other microstructures are energetically favourable for $\frac{\lambda_3}{\lambda_2}$ close to $\hat{y}_7$ or $\hat{y}_8$. Nonetheless, as $z_3\leq 3|z_1|$, thanks to Theorem \ref{main Thm 2} we can still select minimizing gradient Young measures for $\EE^0$ by means of vanishing interfacial energy.\\

\textbf{Acknowledgements:} This work was supported by the Engineering and Physical Sciences Research Council [EP/L015811/1]. The author would like to acknowledge the anonymous reviewers for improving this paper with their comments, and providing the current version of Lemma \ref{dasottointegrale}. The author would also like to thank John Ball for the useful suggestions and discussions.
\footnotesize
\bibliographystyle{plain}
\bibliography{biblio}

\end{document}